\definecolor{Darkgreen}{rgb}{0,0.4,0}
\newtheorem{theorem}{Theorem}
\newtheorem{lemma}[theorem]{Lemma}
\newtheorem{proposition}[theorem]{Proposition}
\newtheorem{corollary}[theorem]{Corollary}
\theoremstyle{definition}
\newtheorem{remark}[theorem]{Remark}
\newtheorem*{acknowledgements}{Acknowledgements}
\newcommand{\treegraph}{{\mathbb T_d}}
\newcommand{\treegraphplus}{{\mathbb T^+_d}}
\newcommand{\rot}{{\textup{o}}}
\newcommand{\bbone}{\boldsymbol{1}}
\numberwithin{equation}{section}   %to number the equation following the section number and not just 1,2,3,4,5...
\numberwithin{theorem}{section}
\begin{document}

\title{\bf \large
  \uppercase{Level-set percolation of the {G}aussian free
  field on regular graphs {I}: regular
  trees}}

\date{Preliminary draft}

\author{Angelo Ab\"acherli
  \thanks{Departement Mathematik, ETH Z\"urich,
    R\"amistrasse 101, 8092 Z\"urich, Switzerland}
  \and Ji\v{r}\'i \v{C}ern\'y
  \thanks{Departement Mathematik und Informatik,
    University of Basel, Spiegelgasse 1, 4051 Basel, Switzerland}}

\maketitle

\begin{abstract}
  We study  level-set percolation of the Gaussian free field on the
  infinite \mbox{$d$-regular} tree for fixed $d\geq 3$. Denoting by
  $h_\star$ the critical value, we obtain the following results: for
  $h>h_\star$ we derive estimates on conditional exponential moments of
  the size of a fixed connected component of the level set above level $h$;
  for $h<h_\star$ we prove that the number of vertices connected over
  distance $k$ above level $h$ to a fixed vertex grows exponentially in
  $k$ with positive probability. Furthermore, we show that the
  percolation probability is a continuous function of the level $h$, at
  least away from the critical value $h_\star$. Along the way we also
  obtain matching upper and lower bounds on the eigenfunctions involved
  in the spectral characterisation of the critical value $h_\star$ and
  link the probability of  a non-vanishing  limit of the martingale used
  therein to the percolation probability. A number of the results derived
  here are applied in the accompanying  paper~\cite{AC2}.
\end{abstract}

%%%%%%%%%%%%%%%%%%%%%%%%%%%%%%%%%%%%%%
%%%%%%%%%%%%%%%%%%%%%%%%%%%%%%%%%%%%%%
%%%%%%%%%%%%%%%%%%%%%%%%%%%%%%%%%%%%%%
%%%%%%%%%%%%%%%%%%%%%%%%%%%%%%%%%%%%%%
%%%%%%%%%%%%%%%%%%%%%%%%%%%%%%%%%%%%%%
%%%%%%%%%%%%%%%%%%%%%%%%%%%%%%%%%%%%%%
%%%%%%%%%%%%%%%%%%%%%%%%%%%%%%%%%%%%%%
%%%%%%%%%%%%%%%%%%%%%%%%%%%%%%%%%%%%%%

%SECTION 1

\section{Introduction}
\label{s:intro}

In this article we investigate the Gaussian free field on $d$-regular
trees with $d\geq 3$. We focus in particular on the level sets  and their
behaviour in connection with level-set percolation. The goal is to obtain
a good description of the nature of the level sets for levels away from
the critical value of level-set percolation.

Level-set percolation of the Gaussian free field is a significant
representative of a percolation model with long-range dependencies and it
has attracted attention for a long time, dating back to \cite{31},
\cite{28} and \cite{12}. More recent developments can be found for
instance in \cite{7}, \cite{13}, \cite{22}, \cite{40}, \cite{DPR2},
\cite{Nitz} and \cite{ChiaNitz}. The particular case of Gaussian free
field on regular trees was studied before in \cite{35} and
\cite{SzniECP}, and on general transient trees in~\cite{37}. Compared to
the present article, the emphasis in these three papers is put on a
different aspect of the Gaussian free field, namely its connection with
the model of random interlacements.

Studying level-set percolation of the Gaussian free field on regular
trees specifically is of intrinsic interest. The case of the regular tree
comes along with strong tools based on the structure and symmetry of the
graph. These allow for often very exact computations which potentially
lead to especially explicit, though not at all trivial,  results. They
also make it one of the most promising setups for understanding level-set
percolation of the Gaussian free field  near criticality.

Besides the fact that the results obtained in this article are
interesting in their own right, our prime motivation comes from a concrete
application: in the accompanying  paper \cite{AC2} we prove a phase
transition in the behaviour of the level sets of the \emph{zero-average}
Gaussian free field on a certain class of \emph{finite} $d$-regular
graphs that are locally (almost) tree-like. This class includes $d$-regular
expanders of large girth and typical realisations of random $d$-regular
graphs. In a certain sense, the Gaussian free field on the $d$-regular
tree provides the  \emph{local picture} of the zero-average Gaussian free
field on these finite graphs, and its detailed understanding developed in
the present article is a key ingredient for~\cite{AC2}.

We now describe our results more precisely. Let $d\geq 3$ and denote by
$\treegraph$ the infinite $d$-regular tree. On $\treegraph$ we consider
the Gaussian free field with law $\mathbb P^\treegraph$ on
$\mathbb R^\treegraph$ and canonical coordinate process
$(\varphi_\treegraph(x))_{x\in \treegraph}$ so that,
\begin{equation}
  \label{0.2}
  \parbox{12cm}{
    under $\mathbb P^\treegraph$, $(\varphi_\treegraph(x))_{x \in \treegraph}$ is a
    centred Gaussian field on $\treegraph$ with covariance $\mathbb E^\treegraph
      [\varphi_\treegraph(x) \varphi_\treegraph(y)] = g_\treegraph(x,y)$ for all $x,y
      \in \treegraph$, where  $g_\treegraph(\cdot,\cdot)$ is the Green function of
    simple random walk on $\treegraph$ (see \eqref{1.1}).
  }
\end{equation}
Our main interest lies in investigating properties of the level sets of
$\varphi_\treegraph$, i.e.~of
\begin{equation*}
  E_{\varphi_\treegraph}^{\geq h}
  \coloneqq \{x\in \treegraph \, | \,
    \varphi_\treegraph(x)\geq h\}\quad\text{for } h\in \mathbb R.
\end{equation*}
In particular, we are interested in the connected component of
$E_{\varphi_\treegraph}^{\geq h}$ containing a fixed vertex
$\rot \in \treegraph$ (called \emph{root}) and denoted by
\begin{equation}
  \label{e:defCo}
  \mathcal C_{\rot}^{h} \coloneqq \big\{x\in \treegraph \, \big| \,
    x \text{ is connected to $\rot$ in $E_{\varphi_\treegraph}^{\geq h}$}\big\}.
\end{equation}

With this notation at hand we can define the critical value of level-set
percolation of the Gaussian free field via
\begin{equation}
  \label{0.5}
  h_\star \coloneqq \inf \big \{
    h\in \mathbb R \, \big| \,
    \mathbb P^\treegraph \big[ |\mathcal C_{\rot}^{h}|=\infty
      \big] = 0 \big \}.
\end{equation}
We point out that there is \emph{no} explicit formula for $h_\star$, even
though we consider the Gaussian free field on a regular tree. However,
the special structure of the underlying graph allows for a crucial
spectral characterisation of the critical value, as derived in \cite{35}.
We recall it in details in Section \ref{subsection1.1}. Very roughly, in
this spectral characterisation one associates to any level
$h\in \mathbb R$ a self-adjoint, non-negative operator $L_h$ on
$L^2(\mathbb R, \mathcal B(\mathbb R), \nu)$, where $\nu$ is a certain
centred Gaussian measure. The operator $L_h$ is naturally linked to the
distribution of $\varphi_\treegraph$ at a vertex conditioned on the value
of $\varphi_\treegraph$ at a neighbouring vertex and truncated below
level $h$ (see \eqref{e:defLh} and below it). One then considers the
operator norms $(\lambda_h)_{h\in \mathbb R}$ of the operators
$(L_h)_{h\in \mathbb R}$ and finds that (see  \cite{35}, Section 3)
\begin{equation}
  \label{briefspectralcharacterisation}
  \parbox{12cm}{
    the map $h \mapsto \lambda_h$ is a
    decreasing homeomorphism from $\mathbb R$ to $(0,d-1)$ and $h_\star$ is
    the unique value in $\mathbb R$ such that $\lambda_{h_\star} = 1$.
  }
\end{equation}
Additionally, for $h\in \mathbb R$ one has that $\lambda_h$ is a simple
eigenvalue of $L_h$ which is associated to a unique, non-negative
eigenfunction $\chi_h$ with unit $L^2$-norm, vanishing on $(-\infty, h)$
and positive elsewhere. So far one very important aspect of the
eigenfunctions $(\chi_h)_{h\in \mathbb R}$ was unknown, namely the
precise understanding of their asymptotic behaviour. Some care is applied
in \cite{35} to circumvent this lack of control (see  Remark 3.4 and
  Remark 4.4 therein).

As a first result in this paper, we close this gap and we obtain matching
upper and lower bounds on the eigenfunctions $(\chi_h)_{h\in \mathbb R}$.
In essence, we show in Proposition \ref{p:chibounds} that for every
$h\in \mathbb R$ there exist $c_h,c_h'>0$ such that
\begin{equation}
  \label{e:mainresultchi}
  0< c_h\, a^{1-\log_{d-1}(\lambda_h)}
  \le \chi_h(a) \le c'_h\, a^{1-\log_{d-1}(\lambda_h)}
  \quad \text{for all } a\in [h,\infty)
\end{equation}
(see also Remark \ref{r:slopeofchi} (i)). Presumably, such exact bounds
might be helpful when tackling level-set percolation questions of
$\varphi_\treegraph$ near the critical value $h_\star$. In this paper we
will use the upper bound to show the exponential growth of
$|\mathcal C_\rot^h|$ for $h<h_\star$ (see \eqref{e:preexpgrow}).

We also obtain another result related to the spectral characterisation of
level-set percolation of $\varphi_\treegraph$. It concerns the
non-negative martingale $(M_k^{\geq h})_{k\geq 0}$ for $h\in \mathbb R$
on which the proof  of the spectral characterisation of $h_\star$ in
\cite{35} heavily relies and in which the eigenfunction $\chi_h$ and the
associated eigenvalue $\lambda_h$ appear (see \eqref{209} for the
  definition). We show in the present paper  that  for all
$h\in \mathbb R \setminus \{h_\star\}$ the probability of a non-vanishing
martingale limit is equal to the `forward percolation probability' (see
  Proposition \ref{equivalence})
\begin{equation}
  \label{mainresultmartingale}
  \mathbb P^\treegraph \big[ |\mathcal C_\rot^h \cap
    \treegraphplus| = \infty  \big]
  = \mathbb P^\treegraph \big[ M^{\geq h}_\infty > 0\big],
\end{equation}
where $\treegraphplus \subseteq \treegraph$ is the `forward tree', that
is, the subtree of $\treegraph$ containing the root $\rot$ and in which
each vertex except for the root $\rot$ has $d$ neighbours and the root
$\rot$ has $d-1$ neighbours (the precise definition is given below
  \eqref{830}).

We moreover investigate the continuity properties in $h$ of percolation
probabilities like on the left hand side of \eqref{mainresultmartingale}
and as a third result we show in Theorem \ref{t:etacontinuity} that
\begin{equation}
  \label{e:defeta}
  \parbox{12cm}{
    the percolation probability $\eta (h) \coloneqq
    \mathbb P^\treegraph [|\mathcal C_{\rot}^{h}|=\infty]$ for
    $h\in \mathbb R$ and the forward percolation probability
    $\eta^+(h)\coloneqq \mathbb P^\treegraph \big[ |\mathcal C_\rot^h \cap
      \treegraphplus| = \infty  \big]$ for  $h\in \mathbb R$
    are continuous functions on $\mathbb R\setminus  \{h_\star\}$.}
\end{equation}

We then turn to  $\mathcal C_\rot^h$ and we obtain rather precise
estimates of its cardinality in both the subcritical $(h>h_\star)$ and
supercritical $(h<h_\star)$ phase.

We show that if $h>h_\star$, then there is some $\delta>0$ such that for
all $\gamma>0$ we can find $c_{h,\gamma},c'_{h,\gamma}>0$ satisfying (see
  Theorem~\ref{t:exponentialboundong})
\begin{equation}
  \label{e:mainresultsubcritical}
  \mathbb E^\treegraph \Big[ (1+\delta)^{| \mathcal C_{\rot}^{h}\cap \treegraphplus   |}
    \, \big| \, \varphi_\treegraph(\rot) = a \Big]
  \leq c_{h,\gamma} \exp(c'_{h,\gamma} a^{1+\gamma}) \quad \text{for all } a\geq h.
\end{equation}
In particular, this will imply that
$| \mathcal C_\rot^h \cap \treegraphplus |$ has exponential moments.
Incidentally, it also implies conditional exponential-tail estimates of
$| \mathcal C_{\rot}^{h}| $ of the form
\begin{equation}
  \label{e:preexptail}
  \mathbb P^\treegraph\big[|\mathcal C_{\rot}^{h}|\ge k
    \,\big |\,
    \varphi_\treegraph(\rot) = a\big]
  \le c_{h,a} e^{-c'_h k} \quad \text{for all } k \geq 1 \text{ and }  a\ge h,
\end{equation}
with a control of the dependence of the constant $c_{h,a}>0$ on the value
$a=\varphi_{\treegraph}(\rot)$ of the field at the root (see Remark
  \ref{r:linktointroduction}).

Finally, for $h<h_\star$ we prove that the number of vertices connected
over distance $k$ above level $h$ to the root $\rot \in \treegraph$ grows
exponentially  in $k$ with positive probability. This can be shown by
using our first result \eqref{e:mainresultchi} in combination with
\eqref{mainresultmartingale}. More precisely, with $S_\treegraph(\rot,k)$
denoting the sphere of radius $k\geq 0$ around $\rot$ in $\treegraph$, we
prove that (see Theorem \ref{t:expgrowth})
\begin{equation}
  \label{e:preexpgrow}
  \lim_{k \to \infty}
  \mathbb P^\treegraph \big[ |\mathcal C_{\rot}^{h} \cap \treegraphplus \cap
    S_\treegraph(\rot,k) |
    \geq \tfrac{\lambda_h^k}{k^2}  \big]  =  \eta^+(h) >0.
\end{equation}
We remind that here $\lambda_h>1$ is the eigenvalue from
\eqref{briefspectralcharacterisation}.

As explained earlier, we will see in the accompanying  paper \cite{AC2}
that the Gaussian free field $\varphi_\treegraph$ on $\treegraph$ in
essence plays the role of the local picture of the \emph{zero-average}
Gaussian free field on a specific class of \emph{finite} $d$-regular
graphs that are locally (almost) tree-like. By exploiting this feature,
we establish in \cite{AC2} a phase transition for level-set percolation
of the zero-average Gaussian free field on the finite graphs which is
characterised by the critical value $h_\star$ \emph{on the infinite
  tree}. Roughly, the strategy of \cite{AC2} is to use the local picture
to transfer the problem from the finite graphs to $\treegraph$ and then
to use the new results developed in the present article. Specifically, we
apply the estimate \eqref{e:mainresultsubcritical} in the proof of the
subcritical phase (\cite{AC2}, Theorem 3.1) and the two
results \eqref{e:defeta} and \eqref{e:preexpgrow} in the proof of the
supercritical phase (\cite{AC2}, Theorem 4.1). As an aside,
let us mention that for the application in \cite{AC2} it is crucial that
the exponent $\gamma$ on the right hand side of
\eqref{e:mainresultsubcritical} can be chosen strictly smaller than 2.

A similar approach as explained in the previous paragraph was carried out
in \cite{21} to describe a phase transition for the vacant set of simple
random walk on the same class of finite graphs as considered in the
accompanying  paper \cite{AC2}. As shown in \cite{21}, the local picture
in that case is given by the vacant set of random interlacements on
$\treegraph$. Thanks to the detailed understanding of random
interlacements in the infinite model, the phase transition in the finite
model can be established. An advantage of random interlacements on a tree
is that the connected components of its vacant set can be described
rather easily. Indeed, as observed in \cite{Tei09}, they are distributed
as Galton-Watson trees with a binomial offspring distribution. Thus,
properties like \eqref{e:preexptail} or \eqref{e:preexpgrow} are classical.

In contrast, the connected components  of the level sets of
$\varphi_\treegraph$, which play the corresponding role in our setup,
are not  Galton-Watson trees. The situation is more complicated and
obtaining results like \eqref{e:preexptail} and \eqref{e:preexpgrow} is
not straightforward. Instead, by the domain Markov property of the
Gaussian free field, one can view $\mathcal C_\rot^{h}$ for
$h\in \mathbb R$ as a certain multi-type branching process with an
uncountable type space. Some of the results in this paper are similar to
classical results about branching processes, though to our knowledge they
are not covered by the literature. We would like to stress that our
arguments rely on the \emph{special structure} of the Gaussian free field
on regular trees. Let us also mention that, despite the connection
between the Gaussian free field and random interlacements via isomorphism
theorems, we are not aware of any technique allowing to transfer the
results of \cite{Tei09} and \cite{21} directly to our context.

The structure of the article is as follows. In Section \ref{s:notation}
we collect the main part of the notation and some  known results about the
Gaussian free field on $\treegraph$. In particular, in Section
\ref{subsection1.1} we recall the spectral description of the critical
value $h_\star$ obtained in \cite{35}. In Section \ref{s:chibounds} we
derive asymptotic bounds on the eigenfunctions appearing in the spectral
description of $h_\star$. In Section \ref{s:recursiveequation} we give a
recursive equation for the conditional non-percolation probability.
Subsequently, we analyse the behaviour of the level sets of the Gaussian
free field in the supercritical phase in Section \ref{s:supercritical}.
This includes the continuity of the percolation probability (Section
  \ref{ss:continuity}) and the geometrical growth of level sets (Section
  \ref{ss:geometricgrowth}). Finally, in Section \ref{s:subcritical} we
investigate the subcritical phase and show that the cardinality of the
connected component of the level set containing the root has exponential
moments (and more).

\begin{acknowledgements}
  The authors wish to express their gratitude to A.-S.~Sznitman for
  suggesting the problem and for the valuable comments made at various
  stages of the project.
\end{acknowledgements}

%%%%%%%%%%%%%%%%%%%%%%%%%%%%%%%%%%%%%%
%%%%%%%%%%%%%%%%%%%%%%%%%%%%%%%%%%%%%%
%%%%%%%%%%%%%%%%%%%%%%%%%%%%%%%%%%%%%%
%%%%%%%%%%%%%%%%%%%%%%%%%%%%%%%%%%%%%%
%%%%%%%%%%%%%%%%%%%%%%%%%%%%%%%%%%%%%%
%%%%%%%%%%%%%%%%%%%%%%%%%%%%%%%%%%%%%%
%%%%%%%%%%%%%%%%%%%%%%%%%%%%%%%%%%%%%%
%%%%%%%%%%%%%%%%%%%%%%%%%%%%%%%%%%%%%%

%SECTION 1

\section{Notation and useful results}
\label{s:notation}

We start by  introducing the notation and recalling known properties of
the Green function and the Gaussian free field on $\treegraph$. These
include a recursive construction of $\varphi_\treegraph$
(Section~\ref{ss:recursive}) and the spectral characterisation of
$h_\star$ (Section \ref{subsection1.1}).

As mentioned earlier, we consider for fixed $d\geq 3$ the $d$-regular
tree $\treegraph$ with root $\rot$. We endow $\treegraph$ with the usual
graph distance $d_\treegraph(\cdot,\cdot)$. For any $R\geq0$ and
$x \in \treegraph$ we let
$B_\treegraph(x,R) \coloneqq \{y \in \treegraph \, | \, d_\treegraph(x,y) \leq R \}$
and
$S_\treegraph(x,R) \coloneqq \{y \in \treegraph \, | \, d_\treegraph(x,y) = R \}$
denote the ball and the sphere of radius $R$ around $x$, respectively.
For $x,z \in \treegraph$ a  path from $x$ to $z$ is a sequence of
vertices $x=y_0,y_1,\ldots,y_m=z$ in $\treegraph$ for some $m\geq 0$ such
that $y_i$ and $y_{i-1}$ are neighbours for all $i=1,\ldots,m$ (if
  $m\geq 1$). It is a \emph{geodesic} path from $x$ to $z$ if it is the
path of shortest length.

For $x\in \treegraph \setminus \{\rot\}$ let $\overline x$ be the unique
neighbour of $x$ on the geodesic path from $x$ to $\rot$. Moreover, let
$\overline\rot \in \treegraph$ denote an arbitrary fixed neighbour of
$\rot \in \treegraph$. For $x\in \treegraph$ we define
\begin{equation}
  \label{830}
  U_x  \coloneqq   \{  z \in \treegraph \, | \, \text{the geodesic
      path from $z$ to $x$ does not pass through $\overline x$} \}.
\end{equation}
In particular $\treegraph = \{\rot \} \cup \bigcup_{i=1}^d U_{x_i}$, where
$\{x_1,\ldots,x_d\}$ denote the neighbours of $\rot$. In the special case
of $x = \rot$ we write $\treegraphplus \coloneqq U_\rot$. We also set
$B_\treegraph^+(\rot,R)
\coloneqq \{{y \in \treegraphplus} \, | \, d_\treegraph(\rot,y) \leq R \}$
and similarly
$S_\treegraph^+(\rot,R)
\coloneqq \{y \in \treegraphplus \, | \, d_\treegraph(\rot,y) = R \}$
for $R\geq 0$.

We write $P_x^\treegraph$ for the canonical law of the simple random walk
$(X_k)_{k\ge 0}$ on $\treegraph$ starting at $x$ as well as
$E_x^\treegraph$ for the corresponding expectation.  Given
$U\subseteq \treegraph$ we write
$T_U \coloneqq \inf \{ k\geq 0 \, | \, X_k  \notin U\}$ for the exit time
from $U$ and $H_U \coloneqq \inf \{ {k\geq 0}\, | \, X_k \in U \}$ for
the entrance time in $U$ (here we set $\inf \emptyset \coloneqq \infty$).
In the special case of $U=\{z \}$ we use $H_z$ in place of $H_{\{z\}}$.
Recall that (see e.g.~\cite{33}, proof of Lemma 1.24)
\begin{equation}
  \label{e:hitproba}
  P_{x}^\treegraph \big[ H_{y} < \infty \big]
  = \Big(\frac{1}{d-1}\Big)^{d_\treegraph(x,y)}\quad \text{for } x,y\in \treegraph.
\end{equation}

The Green function $g_\treegraph(\cdot,\cdot)$ of simple random walk on
$\treegraph$ is given by (see \cite{33}, Lemma 1.24, for the explicit
  computation)
\begin{equation}
  \label{1.1}
  g_\treegraph(x,y)
  \coloneqq E_x^\treegraph \Big[\sum_{k=0}^\infty
    \bbone_{\{X_k=y\}} \Big]
  = \frac{d-1}{d-2} \Big( \frac1{d-1} \Big)^{d_\treegraph(x,y)}
  \quad\text{for } x,y
  \in \treegraph.
\end{equation}
For $U\subseteq \treegraph$ the Green function $g_\treegraph^U(\cdot,\cdot)$
of simple random walk on $\treegraph$ killed when exiting $U$ is
\begin{equation*}
  g_\treegraph^U(x,y) \coloneqq E_x^\treegraph \Big[\sum_{0\leq k < T_U}
    \bbone_{\{X_k=y\}} \Big]
  \quad \text{for } x,y \in \treegraph.
\end{equation*}
The functions $g_\treegraph(\cdot,\cdot)$ and $g^U_\treegraph(\cdot,\cdot)$
are symmetric and finite,  and $g^U_\treegraph(\cdot,\cdot)$
vanishes whenever $x\notin U$ or $y\notin U$. They are related by
the identity
\begin{equation}
  \label{1.4}
  g_\treegraph(x,y) = g_\treegraph^U(x,y) + E_x^\treegraph
  \big[g_\treegraph(X_{T_U},y) \bbone_{\{T_U < \infty\}}\big] \quad \text{for
  } x,y \in \treegraph,
\end{equation}
which is an easy consequence of the strong Markov property of simple
random walk at time $T_U$. In the particular case of $U \coloneqq U_x$
this implies that (by using \eqref{e:hitproba}, \eqref{1.1} and that
  $X_{T_U}=\overline x$ on $\{T_U<\infty\}$ under $P^\treegraph_x$)
\begin{equation}
  \label{e:gUxx}
  g_\treegraph^{U_{x}}(x,x)  = g_\treegraph(x,x) -
  \frac1{d-1} g_\treegraph(\overline x, x) =
  \frac{d-1}{d-2}\Big(1-\frac1{(d-1)^2}\Big)=\frac{d}{d-1}.
\end{equation}

Recall from \eqref{0.2} that $(\varphi_\treegraph(x))_{x \in \treegraph}$
is the centred Gaussian field with covariance given by
$g_\treegraph(\cdot,\cdot)$. It satisfies the following \emph{domain
  Markov property}: for $U \subseteq \treegraph$ define the new field
\begin{equation*}
  \varphi_\treegraph^U(x) \coloneqq  \varphi_\treegraph(x) - E_x^\treegraph[
    \varphi_\treegraph(X_{T_U}) \bbone_{\{T_U < \infty\}} ] \quad \text{for }
  x\in \treegraph.
\end{equation*}
Then,
\begin{equation}
  \label{1.18}
  \parbox{12cm}{
    under $\mathbb P^\treegraph$, $(\varphi_\treegraph^U(x))_{x\in \treegraph}$ is
    a centred Gaussian field on $\treegraph$ which is independent from
    $(\varphi_\treegraph(x))_{x\in \treegraph \setminus U}$ and has covariance
    $\mathbb E^\treegraph [\varphi_\treegraph^U(x) \varphi_\treegraph^U(y)] =
    g_\treegraph^U(x,y)$ for all $x,y \in \treegraph$.
  }
\end{equation}
The proof of this fact follows by an easy computation of covariances and
\eqref{1.4}.

%%%%%%%%%%%%%%%%%%%%%%%%%%%%%%%%%%%%%%
%%%%%%%%%%%%%%%%%%%%%%%%%%%%%%%%%%%%%%
%%%%%%%%%%%%%%%%%%%%%%%%%%%%%%%%%%%%%%
%%%%%%%%%%%%%%%%%%%%%%%%%%%%%%%%%%%%%%

%SECTION 1.1

\subsection{Recursive construction of the Gaussian free field}
\label{ss:recursive}

Property \eqref{1.18} can be applied to obtain a useful recursive
representation of the Gaussian free field on $\treegraph$ that we
introduce now. We point out that this description crucially relies on the
special features of the Gaussian free field when considered on a
(regular) tree.

Let $x\in \treegraph$ and let $\{x_1,\dots,x_I\}$ be the neighbours of $x$
not contained in the geodesic path from $x$ to $\rot$. In particular,
$I=d$ if $x=\rot$ and $I=d-1$ otherwise. We set
$U \coloneqq\bigcup_{i=1}^I U_{x_i}$. Since $\treegraph$ is a tree, it
can be easily seen  that
\begin{itemize}
  \item $g^U_\treegraph(x_i,x_i) = g^{U_{x_i}}_\treegraph(x_i,x_i)$ for
  $i\in \{1,\ldots,I\}$,

  \item $g^U_\treegraph(y,y')=0$ for $y\in U_{x_i}$, $y'\in U_{x_j}$
  where $i,j \in \{1,\ldots,I\}$ with $i\neq j$,

  \item for any $y\in U$, one has $X_{T_U}=x$ on $\{T_U<\infty\}$ under
  $P^\treegraph_y$.
\end{itemize}
Hence \eqref{1.18} together with \eqref{e:hitproba} and \eqref{e:gUxx}
yields that
\begin{equation}
  \label{e:recstep}
  \parbox{12cm}{under $\mathbb P^\treegraph$, conditionally on $\varphi_\treegraph (x)$,
    the random variables
    $(\varphi_\treegraph(x_i))_{1\le i \le I}$ are i.i.d.~Gaussians  with mean
    $\frac 1 {d-1}\varphi_\treegraph(x)$ and variance $\frac{d}{d-1}$.
    Furthermore, they are independent of $(\varphi_\treegraph(y))_{y \in \treegraph \setminus U}$.}
\end{equation}

Let now $(Y_x)_{x\in\treegraph}$ be a collection of independent centred
Gaussian variables defined on some auxiliary probability space
$(\Omega ,\mathcal A, \mathbb P)$ such that
$Y_\rot \sim \mathcal N(0,g_\treegraph(\rot,\rot)) =  \mathcal N(0,\frac{d-1}{d-2})$
and
$Y_x\sim \mathcal N(0,g_\treegraph^{U_x}(x,x)) = \mathcal N(0,  \frac{d}{d-1})$
for $x\neq \rot$. Define recursively
\begin{equation}
  \label{254}
  \widetilde \varphi (\rot) \coloneqq Y_\rot \quad \text{ and }
  \quad \widetilde \varphi(x) \coloneqq \frac 1 {d-1} \widetilde \varphi (\overline x)
  + Y_x \quad\text{for }x \in \treegraph \setminus \{ \rot \}.
\end{equation}
Then, by applying \eqref{e:recstep} iteratively, we see that
\begin{equation}
  \label{blau}
  \text{under $\mathbb P$, the law of
    $(\widetilde \varphi (x))_{x\in \treegraph}$  is $\mathbb P^\treegraph$,}
\end{equation}
so that \eqref{254} can serve as an  alternative construction of the
Gaussian free field on $\treegraph$.

The recursive representation \eqref{254}  has many useful consequences
and it will be used repeatedly throughout the paper. In particular, it
gives  a representation of the conditional distribution of
$\varphi_\treegraph$ given $\varphi_\treegraph(\rot)=a\in \mathbb R$,
\begin{equation}
  \label{e:defPa}
  \mathbb P_a^\treegraph
  \big[ (\varphi_\treegraph(y))_{y\in \treegraph} \in \cdot \,  \big]
  \coloneqq \mathbb P^\treegraph \big[
    (\varphi_\treegraph(y))_{y\in \treegraph} \in \cdot \ \big| \,
    \varphi_\treegraph(\rot)=a \big],
\end{equation}
with corresponding expectation $\mathbb E_a^\treegraph$. Moreover, if we
let $x_1,\dots,x_d$ denote the neighbours of the root
$\rot \in \treegraph$, then from \eqref{254} and \eqref{blau} it follows
that for every $a\in \mathbb R$,
\begin{equation}
  \label{275}
  \parbox{12cm}{under $\mathbb P_a^\treegraph$, the random  fields
    $(\varphi_\treegraph(y))_{y\in U_{x_i}}$ for $i=1,\dots,d$ are
    independent. Furthermore, for any  event
    $A \in \sigma(\varphi_\treegraph(z), z\in \treegraphplus)$ and
    $i=1,\dots,d$  one has
    $\mathbb P_a^\treegraph \big[ (\varphi_\treegraph(y))_{y\in U_{x_i}} \in A \big]
    = \mathbb E^Y \Big[  \mathbb P_{\frac{a}{d-1}+ Y}^\treegraph
      \big[ (\varphi_\treegraph(y))_{y\in \treegraphplus}  \in  A \big]
      \Big]$,
    where $Y \sim \mathcal N(0,\tfrac{d}{d-1})$ and $\mathbb E^Y$ is the expectation
    with respect to $Y$. }
\end{equation}
(In the equality in \eqref{275} we also use that the law of
  $\varphi_\treegraph$ on $U_{x_i}$ equals the law of $\varphi_\treegraph$
  on $\treegraphplus$.)

Due to \eqref{254} and \eqref{blau}, the Gaussian free field on
$\treegraph$ can be related to a multi-type branching process with type
space $\mathbb R$. Indeed, we can view every $x\in S_\treegraph(\rot,k)$
as an individual in the $k$-th generation of the branching process with
an attached type $\varphi_\treegraph(x)\in \mathbb R$. In this
perspective \eqref{e:recstep} can be rephrased as: every individual $x$
has $d-1$ children ($d$ children if $x=\rot$) whose types, conditionally
on $\varphi_\treegraph(x)$, are chosen independently according to the
distribution $\mathcal N(\frac 1{d-1}\varphi_\treegraph(x),\frac{d}{d-1})$.

This point of view can easily be adapted to $\mathcal C_\rot^h$ from
\eqref{e:defCo} as well, namely by considering the same multi-type
branching process but instantly killing all individuals whose type does
not exceed $h$. In other words, $\mathcal C_\rot^h$ can be viewed as a
multi-type branching process with a barrier and the percolation of
$\mathcal C_\rot^h$ corresponds to the non-extinction of this branching
process. However, while some of the results in this paper  are
reminiscent of classical results about branching processes, we would like
to emphasise that the proofs make heavy use of the special structure of
the Gaussian free field on a regular tree. We are going to recall one of
the special features in the next section.

%%%%%%%%%%%%%%%%%%%%%%%%%%%%%%%%%%%%%%
%%%%%%%%%%%%%%%%%%%%%%%%%%%%%%%%%%%%%%
%%%%%%%%%%%%%%%%%%%%%%%%%%%%%%%%%%%%%%
%%%%%%%%%%%%%%%%%%%%%%%%%%%%%%%%%%%%%%

%SECTION 1.2

\subsection{Spectral characterisation of the critical value}
\label{subsection1.1}

We now recall the spectral characterisation of the critical value
$h_\star$ from \cite{35}, which is central for our paper. Note that our
$d$-regular tree $\treegraph$ corresponds in the notation of \cite{35} to
the $(\widetilde d+1)$-regular tree $T$ with $\widetilde d \coloneqq d-1$.
Moreover in \cite{35}, in the definition of the Green function
$g_\treegraph(\cdot,\cdot)$ on the tree, there is an extra normalising
factor equal to the degree of the tree (see \cite{35}, (3.1)). This
explains the differences between the formulas to come  and the formulas
in \cite{35}.

Let $\nu = \mathcal N(0,\frac{d-1}{d-2})$ be the centred Gaussian measure
with variance
$\frac{d-1}{d-2} \overset{\eqref{1.1}}{=} g_\treegraph(\rot,\rot)$. For
$h\in \mathbb R$ define the operator
\begin{equation}
  \label{e:defLh}
  \begin{split}
    &(L_h f) (a) \coloneqq (d-1) \bbone_{[h,\infty)}(a) \,
    \mathbb E^Y \big[ f (\tfrac{a}{d-1} + Y ) \bbone_{[h,\infty)}
      (\tfrac{a}{d-1} + Y )    \big]
    \\ &\text{for } f \in L^2(\nu)\coloneqq L^2(\mathbb R, \mathcal  B(\mathbb R),\nu)
    \text{ and } a \in \mathbb R,
  \end{split}
\end{equation}
where $Y \sim \mathcal N(0,\tfrac{d}{d-1})$ and $\mathbb E^Y$ is the
expectation with respect to $Y$. The operator $L_h$ is closely linked to
the Gaussian free field and its level set above level $h$. Indeed, one
has
$(L_h f) (a) = \mathbb E_a^\treegraph \big[\sum_{x \in \mathcal C_{\rot}^{h}
    \cap S_\treegraph^+(\rot,1)} f(\varphi_\treegraph(x))\big]$
for  $a\geq h$  by \eqref{254}.

The following proposition summarises the known properties of the
operators $(L_h)_{h\in \mathbb R}$ and characterises the critical value
$h_\star$.

\begin{proposition}[\cite{35}, Propositions~3.1 and~3.3]
  \label{p:Lhprops}
  For every $h \in \mathbb R$ the operator $L_h$ is self-adjoint,
  non-negative and its operator norm
  \begin{equation}
    \label{17}
    \lambda_h \coloneqq \| L_h \|_{L^2(\nu) \to L^2(\nu)}
  \end{equation}
  is a simple eigenvalue of $L_h$.  Moreover, there is a unique,
  non-negative eigenfunction $\chi_h \in L^2(\nu)$ of $L_h$,
  corresponding to the eigenvalue $\lambda_h$, with
  $\| \chi_h \|_{L^2(\nu)}=1$.  The function $\chi_h$ is continuous and
  positive on $[h,\infty)$, and vanishing on $(-\infty,h)$. Additionally,
  the map $h \mapsto \lambda_h$ is a decreasing homeomorphism from
  $\mathbb R$ to $(0,d-1)$ and $h_\star$ is the unique value in
  $\mathbb R$ such that $\lambda_{h_\star} = 1$.
\end{proposition}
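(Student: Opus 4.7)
The plan is to expose the structure of $L_h$ via the factorisation $L_h = (d-1)\, M_h K M_h$, where $M_h f(a) \coloneqq \bbone_{[h,\infty)}(a) f(a)$ is the orthogonal projection onto functions vanishing on $(-\infty,h)$, and $(K f)(a) \coloneqq \mathbb E^Y[f(\tfrac{a}{d-1}+Y)]$ is the Markov operator associated to the Gaussian kernel $a\mapsto \tfrac{a}{d-1}+Y$ with $Y\sim\mathcal N(0,\tfrac{d}{d-1})$. A short variance calculation confirms that $\nu$ is the invariant measure of this kernel, and reversibility -- equivalently, symmetry of the joint law of $(a,\tfrac{a}{d-1}+Y)$ with $a\sim\nu$, which is just the covariance identity $g_\treegraph(\rot,x_1)=g_\treegraph(x_1,\rot)$ -- then gives self-adjointness of $K$ on $L^2(\nu)$. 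In fact $K$ is the Ornstein--Uhlenbeck (Mehler) operator at time $\log(d-1)$, diagonalised by the Hermite polynomials associated to $\nu$ with non-negative eigenvalues $((d-1)^{-n})_{n\ge 0}$. Hence $K$ is compact, self-adjoint and non-negative, and all three properties transfer to $L_h$ through the factorisation.

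With $L_h$ compact, self-adjoint and non-negative, I would next apply a Jentzsch / Krein--Rutman argument. The kernel of $L_h$ with respect to $\nu\otimes\nu$ is strictly positive on $[h,\infty)\times[h,\infty)$ and vanishes off this square, so $L_h$ is positivity-improving on the closed invariant subspace of $L^2(\nu)$-functions supported in $[h,\infty)$. Jentzsch's theorem then gives that $\lambda_h = \|L_h\|_{L^2(\nu)\to L^2(\nu)}$ is a simple eigenvalue with a unique non-negative unit-norm eigenfunction $\chi_h$, which must be strictly positive on $[h,\infty)$ by the positivity improvement applied to the eigenequation. Continuity of $\chi_h$ on $[h,\infty)$ is read off the eigenequation $\lambda_h\chi_h(a)=(d-1)\,\mathbb E^Y[\chi_h(\tfrac{a}{d-1}+Y)\bbone_{[h,\infty)}(\tfrac{a}{d-1}+Y)]$ by dominated convergence in $a$.

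Strict monotonicity of $h\mapsto \lambda_h$ follows from a Rayleigh quotient comparison: for $h'<h$, $\lambda_{h'}\ge \langle L_{h'}\chi_h,\chi_h\rangle_{L^2(\nu)} = \lambda_h$ (the equality uses that $\chi_h$ vanishes on $[h',h)$), and equality would force $\chi_h$ to lie in the top eigenspace of $L_{h'}$, hence to coincide with $\chi_{h'}$ by simplicity -- contradicting $\chi_{h'}>0$ on $[h',\infty)\supsetneq[h,\infty)$. The limits $\lambda_h\to d-1$ as $h\to-\infty$ (from $L_h\to(d-1)K$, whose top eigenvalue is $1$, attained by the constant Hermite polynomial) and $\lambda_h\to 0$ as $h\to+\infty$ (e.g.\ from $\|L_h\|\le(d-1)\sqrt{\nu([h,\infty))}$), combined with continuity of $\lambda_h$ obtained by upper and lower semicontinuity (substituting $\chi_h$ as a test vector in $L_{h'}$ for $h'$ close to $h$), make $h\mapsto\lambda_h$ a decreasing homeomorphism $\mathbb R\to(0,d-1)$. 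A unique $h_*\in\mathbb R$ therefore satisfies $\lambda_{h_*}=1$.

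The last step, and where I expect the main obstacle, is to identify this spectral $h_*$ with the percolation critical value $h_\star$ of \eqref{0.5}. Here I would use the branching-process picture around \eqref{e:recstep}--\eqref{275} and the eigenpair $(\lambda_h,\chi_h)$ to construct a Biggins / Kesten--Stigum type martingale of the form $M_k^{\ge h}\propto \lambda_h^{-k}\sum_{x\in \mathcal C_\rot^h\cap S_\treegraph^+(\rot,k)}\chi_h(\varphi_\treegraph(x))$, whose martingale property is verified from \eqref{275} together with the eigenequation for $\chi_h$. For $\lambda_h<1$ a first-moment computation forces $|\mathcal C_\rot^h|<\infty$ almost surely. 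The hard direction is $\lambda_h>1$: one needs strictly positive survival probability, but since the type space $\mathbb R$ is uncountable the classical Kesten--Stigum $L\log L$ criterion cannot be cited directly. My plan would be to derive two-sided quantitative bounds on $\chi_h$ (precisely what Section~\ref{s:chibounds} is designed to provide) and then reduce to a standard Galton--Watson survival argument by truncating the type space to a compact interval on which the offspring distribution is uniformly supercritical.
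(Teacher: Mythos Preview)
This proposition is not proved in the paper itself; it is quoted from \cite{35}, Propositions~3.1 and~3.3. Your outline for the spectral part --- the factorisation $L_h=(d-1)M_hKM_h$ with $K$ the Mehler operator at time $\log(d-1)$, compactness and self-adjointness inherited from $K$, a Jentzsch/Krein--Rutman argument for the simplicity of $\lambda_h$ and strict positivity of $\chi_h$, and the Rayleigh-quotient monotonicity --- is essentially the route taken in \cite{35}. (One small correction: the bound $\|L_h\|\le (d-1)\sqrt{\nu([h,\infty))}$ is not right as stated; use instead $\lambda_h\le\|L_h\|_{HS}\to0$ as $h\to\infty$ by dominated convergence, since $\|K\|_{HS}<\infty$.) The genuine divergence is in the supercritical direction of the identification $h_*=h_\star$. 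In \cite{35}, survival for $\lambda_h>1$ is obtained by showing that the martingale \eqref{209} is bounded in $L^2(\mathbb P^\treegraph)$: the conditional second-moment computation reduces matters to finiteness of $\|\chi_h\|_{L^4(\nu)}$, which follows by bootstrapping the hypercontractivity estimate~\eqref{405} through the eigenequation; uniform integrability then yields $\mathbb E^\treegraph[M_\infty^{\ge h}]=\mathbb E^\treegraph[M_0^{\ge h}]>0$ and hence positive survival probability (cf.~\eqref{278}). Your truncation-to-compact-types route is workable in principle, but it is not a reduction to a \emph{single-type} Galton--Watson process --- you still need a multi-type survival criterion on the compact type space, or an embedded regeneration argument --- and the appeal to Section~\ref{s:chibounds} is unnecessary for this purpose. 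The $L^2$-martingale argument via hypercontractivity is both shorter and self-contained.
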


In Proposition~\ref{p:chibounds} in Section \ref{s:chibounds} we will
give matching upper and lower bounds on the eigenfunctions $\chi_h$.

On the way, we recall the following hypercontractivity estimate which is
a direct consequence of the hypercontractivity property of the
Ornstein-Uhlenbeck semigroup (see \cite{35}, (3.14)): for  $1<p<\infty$
and $q=(p-1)(d-1)^2+1$ one has (with $Y\sim \mathcal N(0,\frac{d}{d-1})$)
\begin{equation}
  \label{405}
  \Big\|  \mathbb E^Y \big[ f (\tfrac{\cdot}{d-1} + Y ) \big] \Big\|
  _{L^q(\nu)} \leq \| f \|_{L^p(\nu)} \quad \text{for $f\in L^p(\nu )$}.
\end{equation}
We will use the estimate \eqref{405}, with its precise relation between
the parameters $p$ and $q$, several times. Especially, it will be applied
to prove Proposition~\ref{1007} which computes the Fr\'echet derivative
of a certain operator. This will be a key ingredient for showing the
existence of conditional exponential moments of $|\mathcal C_\rot^h |$ in
the subcritical phase in Section~\ref{s:subcritical}.

Furthermore, for every $h\in \mathbb R$ there is a martingale
$(M^{\geq h}_k)_{k\geq0}$ closely related to $L_h$. Indeed, if we set
\begin{equation}
  \label{300}
  \mathcal Z_k^h \coloneqq \mathcal C_{\rot}^{h} \cap
  S_\treegraph^+(\rot,k) \quad \text{for } k\geq 0,
\end{equation}
then  (see \cite{35}, (3.31) and (3.35))
\begin{equation}
  \label{209}
  M^{\geq h}_k \coloneqq \lambda_h^{-k} \sum_{y\in  \mathcal Z_k^h}
  \chi_h(\varphi_\treegraph(y))  \quad \text{for }k\geq 0
\end{equation}
defines a non-negative martingale under $\mathbb P^\treegraph$ with
respect to the filtration $(\mathcal F_k)_{k\geq 0}$ given by
\begin{equation}
  \label{e:defFk}
  \mathcal F_k \coloneqq \sigma( \varphi_\treegraph(y) , y\in
    B_\treegraph^+(\rot,k)).
\end{equation}
In particular, $M^{\geq h}_k$ converges $\mathbb P^\treegraph$-almost
surely to some $M^{\geq h}_\infty \geq 0$ as $k\to \infty$ and (see
  \cite{35}, proof of Proposition 3.3)
\begin{equation}
  \label{278}
  \text{for $h<h_\star$ one has $\mathbb P^\treegraph[ M^{\geq h}_\infty > 0]
    >0$.}
\end{equation}
Note that there is a direct relation between the probability of a
non-vanishing martingale limit $M_\infty^{\geq h}$ and the forward
percolation probability
\begin{equation}
  \label{251}
  \eta^+(h) =  \mathbb P^\treegraph \big[ |\mathcal C_\rot^h \cap
    \treegraphplus| = \infty  \big]  \quad \text{for $h \in \mathbb R$}
\end{equation}
from \eqref{e:defeta}.
We only need to observe that
\begin{equation*}
  \{ |\mathcal C_\rot^h \cap \treegraphplus| <\infty \} \subseteq \{\mathcal
    Z_k^h = \emptyset \text{ for $k$ large enough}\} \subseteq
  \{M_\infty^{\geq h}=0\}.
\end{equation*}
Therefore $\mathbb P^\treegraph[ M^{\geq h}_\infty > 0] \leq \eta^+(h)$.
In Section~\ref{s:supercritical}  we will see that this inequality is
actually an equality, at least when $h\neq h_\star$
(Proposition~\ref{equivalence}). As a last observation, note that by a
union bound and the symmetry of $\treegraph$ we obtain that
$\eta^+(h) \leq \eta (h) \leq d\cdot \eta^+(h)$ for the percolation
probability $\eta$ from \eqref{e:defeta}. Hence by \eqref{0.5} one has
$\eta^+(h) = 0$ for $h>h_\star$ and $\eta^+(h) > 0$ for $h<h_\star$.

A final word on the convention followed concerning constants: by
$c,c',\ldots$ we denote  positive constants with values changing from
place to place and which only depend on the dimension $d$. The dependence
of constants on additional parameters  appears in the notation.

%%%%%%%%%%%%%%%%%%%%%%%%%%%%%%%%%%%%%%
%%%%%%%%%%%%%%%%%%%%%%%%%%%%%%%%%%%%%%
%%%%%%%%%%%%%%%%%%%%%%%%%%%%%%%%%%%%%%
%%%%%%%%%%%%%%%%%%%%%%%%%%%%%%%%%%%%%%
%%%%%%%%%%%%%%%%%%%%%%%%%%%%%%%%%%%%%%
%%%%%%%%%%%%%%%%%%%%%%%%%%%%%%%%%%%%%%
%%%%%%%%%%%%%%%%%%%%%%%%%%%%%%%%%%%%%%
%%%%%%%%%%%%%%%%%%%%%%%%%%%%%%%%%%%%%%

%SECTION 2

\section{Asymptotic behaviour of the eigenfunctions}
\label{s:chibounds}

The main result of this section are the matching bounds on the
eigenfunctions $(\chi_h)_{h\in \mathbb R}$ from
Proposition~\ref{p:Lhprops} collected in Proposition \ref{p:chibounds}
below (corresponding to \eqref{e:mainresultchi}). The upper bound will be
used later to show that connected components of supercritical level sets
grow exponentially with positive probability (Theorem~\ref{t:expgrowth}
  in Section~\ref{s:supercritical}). The corresponding lower bound is not
used further but it is included for completeness.

\begin{proposition}
  \label{p:chibounds}
  \textup{(i)} There exists $c>0$ (see in \eqref{205} below) such that
  for all $h \in \mathbb R$ one has
  \begin{equation}
    \label{e:upperboundchi}
    \chi_h(a) \leq c \,  a^{1-\log_{d-1}(\lambda_h)}
    \quad \text{for all }  a \geq d-1.
  \end{equation}

  \noindent \textup{(ii)} For every $h\in \mathbb R$ there exists
  $c_h>0$ such that
  \begin{equation}
    \label{e:lowerboundchi}
    \chi_h(a) \geq  c_h
    a^{1-\log_{d-1}(\lambda_h)}
    \quad \text{for all } a \geq h.
  \end{equation}
\end{proposition}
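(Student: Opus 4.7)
The plan for both bounds is to iterate the eigenvalue equation for $\chi_h$ and then rescale. Since $\chi_h$ vanishes on $(-\infty,h)$, Proposition~\ref{p:Lhprops} gives, for $a\geq h$,
\begin{equation*}
  \chi_h(a) = \tfrac{d-1}{\lambda_h}\,\mathbb E^Y\bigl[\chi_h\bigl(\tfrac{a}{d-1}+Y\bigr)\bigr],\qquad Y\sim\mathcal N(0,\tfrac d{d-1}).
\end{equation*}
Introducing i.i.d.\ copies $(Y_j)_{j\geq 1}$ of $Y$ and the Markov chain $T_0\coloneqq a$, $T_j\coloneqq \tfrac{T_{j-1}}{d-1}+Y_j$, iteration yields, for every $a\geq h$ and $n\geq 1$,
\begin{equation*}
  \chi_h(a) = \Bigl(\tfrac{d-1}{\lambda_h}\Bigr)^n\,\mathbb E\bigl[\chi_h(T_n)\,\bbone_{\{T_j\geq h\text{ for all }1\leq j\leq n-1\}}\bigr].
\end{equation*}
Two observations are central: $\tfrac{d-1}{\lambda_h}=(d-1)^{\alpha_h}$ with $\alpha_h\coloneqq 1-\log_{d-1}(\lambda_h)\in(0,\infty)$ (since $\lambda_h\in(0,d-1)$); and $T_n$ is Gaussian with mean $\tfrac{a}{(d-1)^n}$ and variance $\sigma_n^2\in\bigl[\tfrac{d}{d-1},\tfrac{d-1}{d-2}\bigr)$, strictly below the variance $\tfrac{d-1}{d-2}$ of $\nu$. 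The idea is to pick $n=n(a)$ so that $\tfrac{a}{(d-1)^n}$ is of order one, whereby the prefactor $(d-1)^{n\alpha_h}$ matches $a^{\alpha_h}$.

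For the upper bound~(i) I will take $n\coloneqq\lfloor\log_{d-1}(a)\rfloor$ for $a\geq d-1$, giving $\tfrac{a}{(d-1)^n}\in[1,d-1)$ and, since $\alpha_h>0$, $(d-1)^{n\alpha_h}\leq a^{\alpha_h}$. Dropping the positive indicator and applying Cauchy--Schwarz in $L^2(\nu)$,
\begin{equation*}
  \mathbb E[\chi_h(T_n)]
  =\int \chi_h\,\tfrac{d\mathcal L(T_n)}{d\nu}\,d\nu
  \leq \|\chi_h\|_{L^2(\nu)}\cdot\bigl\|\tfrac{d\mathcal L(T_n)}{d\nu}\bigr\|_{L^2(\nu)}.
\end{equation*}
Because $\sigma_n^2<2\tfrac{d-1}{d-2}$, the Gaussian integral computing this density norm converges, and a direct calculation bounds it by a constant depending only on $d$ (the mean $\tfrac{a}{(d-1)^n}$ varies in the bounded range $[1,d-1)$). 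Combined with $\|\chi_h\|_{L^2(\nu)}=1$ this yields \eqref{e:upperboundchi} with a constant $c$ uniform in $h$.

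For the lower bound~(ii), instead of dropping the indicator I will restrict to an event on which $(T_j)$ stays above $h$ and ends in a compact set. Fix $M=M(h)$ with $M/2\geq h$; for $a\geq(d-1)M$ set $n\coloneqq\lfloor\log_{d-1}(a/M)\rfloor$, so $\tfrac{a}{(d-1)^j}\geq M$ for all $j\leq n$. The fluctuation $S_j\coloneqq T_j-\tfrac{a}{(d-1)^j}$ is a centred Gaussian martingale with variance at most $\tfrac{d-1}{d-2}$, and Doob's $L^2$ maximal inequality gives $\mathbb P(\sup_{j\leq n}|S_j|\leq M/2)\geq c>0$ uniformly in $n$. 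On this event $T_j\geq M/2\geq h$ for all $j\leq n$ and $T_n$ lies in $K\coloneqq[M/2,(d-1)M+M/2]\subset[h,\infty)$, a compact interval on which the continuous, strictly positive function $\chi_h$ is bounded below by some $c_{h,K}>0$. Together with $(d-1)^{n\alpha_h}\geq (a/((d-1)M))^{\alpha_h}$ this gives \eqref{e:lowerboundchi} for $a\geq(d-1)M$; the compact range $a\in[h,(d-1)M)$ is then handled directly by continuity and strict positivity of $\chi_h$.

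The main obstacle in~(i) is the uniform $L^2(\nu)$-bound on $d\mathcal L(T_n)/d\nu$: it rests on the strict inequality $\sigma_n^2<\tfrac{2(d-1)}{d-2}$, which becomes tight only as $n\to\infty$, when $T_n$ itself approaches $\nu$. In~(ii) the delicate step is choosing the scale $M$ large enough in terms of $h$ to ensure the walk does not escape below $h$ with probability bounded away from zero uniformly in $n$; this $h$-dependence is precisely what is absorbed into the constant $c_h$ in \eqref{e:lowerboundchi}.
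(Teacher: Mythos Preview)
Your argument for part~(i) is correct and essentially identical to the paper's: iterate the eigenvalue relation, drop the indicator, apply Cauchy--Schwarz in $L^2(\nu)$ using $\|\chi_h\|_{L^2(\nu)}=1$, and choose $n=\lfloor\log_{d-1}(a)\rfloor$. The density bound you invoke is exactly the computation in the paper's equations \eqref{202}--\eqref{204}.

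Part~(ii), however, contains a genuine error. The process $S_j=T_j-\tfrac{a}{(d-1)^j}$ is \emph{not} a martingale: from $T_{j+1}=\tfrac{T_j}{d-1}+Y_{j+1}$ one gets $S_{j+1}=\tfrac{S_j}{d-1}+Y_{j+1}$, so $\mathbb E[S_{j+1}\mid\mathcal F_j]=\tfrac{S_j}{d-1}$. This is an AR(1) chain, and Doob's maximal inequality does not apply. Worse, the conclusion you draw from it is false: for any fixed $M$ one has $\mathbb P\bigl(\sup_{j\leq n}|S_j|\leq M/2\bigr)\to 0$ as $n\to\infty$. Indeed, conditionally on $|S_j|\leq M/2$ there is a uniform positive probability (e.g.\ $\mathbb P(Y>M)$) that $|S_{j+1}|>M/2$, so the probability of staying inside $[-M/2,M/2]$ for $n$ steps decays at least geometrically in $n$. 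Your lower bound therefore collapses to zero.

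The paper circumvents this by a different mechanism. Rather than demanding the fluctuations stay in a fixed window, it designs overlapping intervals $I_k\subset[h,\infty)$ with the property that $a\in I_k$ and $|Y|\leq k+1$ force $\tfrac{a}{d-1}+Y\in I_{k-1}$. Iterating the one-step eigenvalue bound then produces a product $\prod_{\ell=1}^k\mathbb P(|Y|\leq\ell+1)$, and because the window \emph{widens} with $\ell$ the factors approach $1$ fast enough (Gaussian tails give $1-\mathbb P(|Y|\leq\ell+1)\leq 2e^{-c(\ell+1)^2}$) that the infinite product converges to a positive constant. This growing-window idea is the missing ingredient in your approach.
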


\begin{remark}
  \label{r:slopeofchi}
  \textup{(i)} From Proposition~\ref{p:Lhprops} recall that $\chi_h$ is
  continuous and strictly positive on $[h,\infty)$. Therefore, by
  adjusting the constant $c$, Proposition~\ref{p:chibounds} implies
  \eqref{e:mainresultchi}.

  \noindent \textup{(ii)} By Proposition~\ref{p:Lhprops} one has
  $\lambda_h\in (0,d-1)$. Hence, the exponent
  $\kappa_h:= 1-\log_{d-1}(\lambda_h)$ in \eqref{e:upperboundchi} and
  \eqref{e:lowerboundchi} is positive for all $h \in \mathbb R$.
  Moreover, $\kappa_h\in (0,1)$  for $h<h_\star$, $\kappa_h= 1$ for
  $h=h_\star$ and $\kappa_h>1$  for $h>h_\star$. \qed
\end{remark}

\begin{proof}[Proof of Proposition~\ref{p:chibounds}]
  (i) Let $(Y_i)_{i\geq 1}$ be i.i.d.~random variables with distribution
  $\mathcal N(0,\tfrac{d}{d-1})$. By iteratively using \eqref{e:defLh}
  and the fact that $\chi_h$ is the eigenfunction of $L_h$ with
  eigenvalue $\lambda_h$, we obtain  for every $a\in \mathbb R$ and
  $k\geq 1$
  \begin{align}
      \chi_h(a) &
      = \frac1{\lambda_h} (L_h \chi_h)(a)
      \leq \frac{d-1}{\lambda_h} \,
      \mathbb E^{Y_1} \Big[ \chi_h \big(\tfrac{a}{d-1} + Y_1 \big) \Big]
      = \frac{d-1}{\lambda_h}  \, \mathbb E^{Y_1} \Big[  \frac1{\lambda_h}
        (L_h \chi_h)\big(\tfrac{a}{d-1} + Y_1 \big) \Big]   \nonumber
      \\ & \leq \Big(\frac{d-1}{\lambda_h} \Big)^2 \,
      \mathbb E^{Y_1} \Big[  \mathbb E^{Y_2}
        \big[\chi_h \big(\tfrac{a}{(d-1)^2} + \tfrac1{d-1}Y_1 + Y_2 \big) \big]
        \Big]
      \label{200}
      \\ & \leq \ldots
      \leq \Big(\frac{d-1}{\lambda_h} \Big)^k  \, \mathbb E \Big[ \chi_h
        \big(\tfrac{a}{(d-1)^k} + \tfrac1{(d-1)^{k-1}}Y_1 +\ldots +
          \tfrac1{d-1}Y_{k-1}+ Y_k \big) \Big],   \nonumber
  \end{align}
  where the expectation on the right hand side of \eqref{200} is taken
  with respect to $Y_1,\ldots,Y_k$. Note that for any $k\geq 1$ the
  random variable
  $\tfrac1{(d-1)^{k-1}}Y_1 +\dots + \tfrac1{d-1}Y_{k-1}+ Y_k$ appearing
  on the right hand side of \eqref{200} is centred Gaussian with variance
  \begin{equation}
    \label{201}
    \sigma^2_k \coloneqq \frac{d}{d-1} \sum_{i=0}^{k-1}\frac1{(d-1)^{2i}} =
    \frac{d-1}{d-2}\Big(1-\frac1{(d-1)^{2k}}\Big) \leq \frac{d-1}{d-2} \eqqcolon
    \sigma^2.
  \end{equation}
  Hence, if we denote by $f_{\mu,\tau^2}$ the density of the normal
  distribution $\mathcal N(\mu,\tau^2)$ and
  $a_k \coloneqq \tfrac{a}{(d-1)^k}$ for $k\geq 1$, then (recall from
    above \eqref{e:defLh} that $\nu = \mathcal N(0,\sigma^2)$)
  \begin{equation}
    \label{202}
    \begin{split}
      \mathbb E \Big[  & \chi_h \big(\tfrac{a}{(d-1)^k} + \tfrac1{(d-1)^{k-1}}Y_1
        +\ldots + \tfrac1{d-1}Y_{k-1}+ Y_k \big)  \Big]  =  \int_{\mathbb R} \chi_h (y)
      f_{a_k,\sigma^2_k}(y) \, dy  \\
      &=  \int_{\mathbb R} \chi_h(y) \frac{ f_{a_k,\sigma^2_k}
        (y)}{f_{0,\sigma^2}(y)} \, \nu(dy)
      \overset{(*)}{\leq} \underbrace{\| \chi_h
      \|_{L^2(\nu)}}_{= 1}    \Bigg( \int_{\mathbb R}
      \frac{ f^2_{a_k,\sigma^2_k} (y)}{f_{0,\sigma^2}(y)} \, dy \Bigg)^{\frac12}  ,
    \end{split}
  \end{equation}
  where in $(*)$ we apply the Cauchy-Schwarz inequality.
  Note that for all $k\geq 1$
  \begin{equation}
    \label{203}
    \frac{ f^2_{a_k,\sigma^2_k} (y)}{f_{0,\sigma^2}(y)} =
    \frac{\sqrt{\sigma^2}}{\sqrt{2\pi}\sigma^2_k}
    \exp\Big(-\frac{(y-a_k)^2}{\sigma^2_k} + \frac{y^2}{2\sigma^2}\Big) \leq
    f_{2a_k,\sigma^2}(y)  \cdot \frac{\sigma^2}{\sigma^2_1}
    \exp\Big(\frac{a_k^2}{\sigma^2}\Big),
  \end{equation}
  where we use $\sigma^2_1 \leq \sigma^2_k \leq \sigma^2$ (see
    \eqref{201}). By combining \eqref{200} with \eqref{202} and
  \eqref{203} we obtain for any $a \in \mathbb R$ and $k\geq 1$
  \begin{equation}
    \label{204}
    \chi_h(a) \leq \Big(\frac{d-1}{\lambda_h} \Big)^k
    \sqrt{\frac{\sigma^2}{\sigma^2_1}} \exp\Big(\frac{a_k^2}{2\sigma^2}\Big).
  \end{equation}
  If $a \geq d-1$, we can apply \eqref{204} for
  $k = k(a) \coloneqq \lfloor \log_{d-1}(a) \rfloor$, that is, for the
  unique $k\geq 1$ with $(d-1)^k \leq a < (d-1)^{k+1}$. Since
  $\frac{d-1}{\lambda_h}>1$ by Proposition~\ref{p:Lhprops} and
  $a_{k(a)} \leq d-1$, we obtain from \eqref{204}
  \begin{equation}
    \label{205}
    \chi_h(a) \leq \Big(\frac{d-1}{\lambda_h} \Big)^{\log_{d-1}(a)}
    \underbrace{\sqrt{\frac{\sigma^2}{\sigma^2_1}}
      \exp\bigg(\frac{(d-1)^2}{2\sigma^2}\bigg)}_{\eqqcolon \, c >0}
      =  c \, a^{1-\log_{d-1}(\lambda_h)},
  \end{equation}
  which concludes the proof of part (i).

  (ii)  Let $K=K_h\geq 0$ be the smallest non-negative integer such that
  $(d-1)^{K}-14 \geq h$. Define the intervals
  \begin{equation}
    \label{276}
    \begin{split}
      &I_0 \coloneqq \big[h  , (d-1)^{K+1} +
        4\cdot 2 \big], \\
      &I_k \coloneqq \big[(d-1)^{K+k} - 4(k+2),
        (d-1)^{K+k+1} + 4 (k+2) \big] \quad \text{for $k\geq 1$},
    \end{split}
  \end{equation}
  which form a non-disjoint decomposition of $[h,\infty)$. (To see that
    all left boundaries of the intervals are larger than $h$ use the
    assumption on $K$ as well as $d\geq 3$.) The intervals are such that
  \begin{equation}
    \label{e:Iks}
    \frac{a}{d-1}- (k+1) \in I_{k-1} \quad
    \text{and} \quad \frac{a}{d-1} + (k+1) \in I_{k-1} \quad \text{ for $a\in I_k$, $k\geq 1$.}
  \end{equation}
  Indeed, for $k\geq 1$ and $d\geq 3$ one has
  $4\frac {k+2}{d-1} + (k+1) \leq 4(k+1)$  and hence for every~$a\in I_k$
  \begin{equation*}
    \begin{split}
      \tfrac{a}{d-1} +  (k+1) &\leq (d-1)^{K+k}+4\tfrac {k+2}{d-1} +
      (k+1) \leq (d-1)^{K+k}+4(k+1),  \\
      \tfrac{a}{d-1} -  (k+1) &\geq (d-1)^{K+k-1}-4\tfrac {k+2}{d-1} -
      (k+1) \geq  (d-1)^{K+k-1} - 4(k+1).
    \end{split}
  \end{equation*}

  By using once more that $\chi_h$ is the eigenfunction of $L_h$, we have
  for every $a\in I_{k}$ with $k\geq 1$ that
  \begin{equation}
  \label{something}
    \chi_h(a) = \frac1{\lambda_h} (L_h \chi_h)(a)
    \stackrel{\eqref{e:defLh}}{\geq}  \frac{d-1}{\lambda_h}
    \underbrace{\bbone_{[h,\infty)}(a)}_{=1} \,
    \mathbb E^{Y} \Big[ \chi_h
      \big(\tfrac{a}{d-1} + Y \big)
      \bbone_{\{|Y| \leq k+1\}} \Big].
  \end{equation}
  From \eqref{e:Iks} we have that on $\{ |Y|\le  k+1 \}$ it holds
  $\frac{a}{d-1}+Y\in I_{k-1}$. Hence,
  \begin{equation}
    \label{206}
    \chi_h(a)
     \stackrel{\eqref{something}}{\geq} \frac{d-1}{\lambda_h}
    \inf_{b\in I_{k-1}} \chi_h(b)
    \cdot \mathbb P \big[ |Y| \leq k+1 \big] \quad \text{for $a\in I_k$, $k\geq 1$}.
  \end{equation}
  The repeated application of \eqref{206} implies that
  \begin{equation}
    \label{207}
    \begin{split}
      \chi_h(a)
      &\geq \Big(\frac{d-1}{\lambda_h}\Big)^k
      \inf_{b\in I_0} \chi_h(b)
      \prod_{\ell=1}^k \big(1-\mathbb P[|Y|> \ell+1]\big)
     \quad \text{for $a\in I_k$, $k\geq 1$}.
    \end{split}
  \end{equation}
  By the exponential Markov inequality
  \begin{equation}
    \label{e:tailprod}
    \prod_{\ell=1}^k \big(1-\mathbb P[|Y|> \ell +1]\big)
    \ge \prod_{\ell=1}^k \big(1-2 e^{-\frac{d-1}{2d}(\ell+1)^2}\big)
    \overset{d\geq 3}{\geq}
    \prod_{\ell=1}^\infty \big(1-2 e^{-\frac{1}{3}(\ell+1)^2}\big)  \ge c
  \end{equation}
  with $c \in (0,1)$ independent of $k\ge 1$.

  The inequality \eqref{e:lowerboundchi} is now an easy consequence of
  \eqref{207} and \eqref{e:tailprod}. First note that
  $\chi_h(a) \geq \inf_{b\in I_0} \chi_h(b) > 0$ for
  $a \in [h,(d-1)^{K+1})$ since $\chi_h$ is continuous and strictly
  positive on $[h,\infty)$ by Proposition~\ref{p:Lhprops}, and
  $I_0 \subseteq [h,\infty)$ is compact. Now, for $a \geq (d-1)^{K+1}$
  let $k(a) \coloneqq \lfloor \log_{d-1}(a)-K \rfloor\geq 1$ be the
  unique integer with $(d-1)^{K + k(a)} \leq a < (d-1)^{K+k(a)+1}$. In
  particular $a \in I_{k(a)}$  and therefore  from \eqref{207} and
  \eqref{e:tailprod} we obtain that
  \begin{equation*}
      \chi_h(a)
      \geq  c\Big(\frac{d-1}{\lambda_h}\Big)^{k(a)} \inf_{b\in I_0} \chi_h(b)
      \ge \underbrace{c\Big(\frac{\lambda_h}{d-1}\Big)^{K+1}
        \inf_{b\in I_0}\chi_h(b)}_{\eqqcolon \, c_{h} >0} \,
      a^{\log_{d-1}(\frac{d-1}{\lambda_h}) }.
  \end{equation*}
  This shows \eqref{e:lowerboundchi} and concludes the proof of
  Proposition~\ref{p:chibounds}.
\end{proof}

%%%%%%%%%%%%%%%%%%%%%%%%%%%%%%%%%%%%%%
%%%%%%%%%%%%%%%%%%%%%%%%%%%%%%%%%%%%%%
%%%%%%%%%%%%%%%%%%%%%%%%%%%%%%%%%%%%%%
%%%%%%%%%%%%%%%%%%%%%%%%%%%%%%%%%%%%%%
%%%%%%%%%%%%%%%%%%%%%%%%%%%%%%%%%%%%%%
%%%%%%%%%%%%%%%%%%%%%%%%%%%%%%%%%%%%%%
%%%%%%%%%%%%%%%%%%%%%%%%%%%%%%%%%%%%%%
%%%%%%%%%%%%%%%%%%%%%%%%%%%%%%%%%%%%%%

%SECTION 3

\section{Recursive equation for the non-percolation probability}
\label{s:recursiveequation}

In this section we adopt the perspective of multi-type branching
processes and show that a certain function (see \eqref{250}) closely
related to the forward percolation probability from \eqref{251} is the
unique solution to a recursive equation (Theorem \ref{t:uniqueness}).
This fact will be used to derive the results on the supercritical
behaviour of the level sets of $\varphi_\treegraph$ in
Section~\ref{s:supercritical}, in particular the continuity of the
percolation probability and its equality with the probability of a
non-vanishing martingale limit (Theorem \ref{t:etacontinuity} and
  Proposition \ref{equivalence}). At the end of the section we compute
the Fr\'echet derivative of the operator involved in the recursive
equation (Proposition \ref{1007}). This will be an important ingredient
to estimate exponential moments of $|\mathcal C_\rot^h|$  in Section
\ref{s:subcritical} (Theorem \ref{t:exponentialboundong}). The proofs in
this section partially follow the lines from  multi-type branching
processes (see e.g.~\cite{Har63}, Chapter III). However, a lot depends on
the special structure of the Gaussian free field on regular trees.

We introduce for every $h\in \mathbb R$ the conditional forward
extinction probability (see \eqref{e:defPa} for notation)
\begin{equation}
  \label{250}
  q_h(a) \coloneqq  \mathbb P_a^\treegraph \big[
    |\mathcal C_\rot^h \cap \treegraphplus| < \infty
    \big]  \quad \text{for $a \in \mathbb R$}.
\end{equation}
The function $q_h$ is closely related to the value of the forward
percolation probability from \eqref{251} at $h$. Indeed,  since the
distribution $\nu$ above \eqref{e:defLh} is the distribution of
$\varphi_\treegraph(\rot)$ under $\mathbb P^\treegraph$, one has
\begin{equation}
  \label{280}
  \int_{\mathbb R} q_h(a) \, d\nu(a) =
  \mathbb E^\treegraph[q_h(\varphi_\treegraph(\rot))]
  \stackrel{\eqref{251}}{=} 1- \eta^+(h).
\end{equation}
In particular, this shows by the comment at the end of Section
\ref{s:notation} that $q_h$ is identically 1 if $h>h_\star$ and is not
identically 1 if $h<h_\star$.

Now, recall the space $L^2(\nu)$ defined in~\eqref{e:defLh} and let us
define for every $h \in \mathbb R$ the (non-linear) operator $R_h$ on
$L^2(\nu)$ through:
\begin{equation}
  \label{257}
  \begin{split}
    &(R_h f) (a) \coloneqq \bbone_{(-\infty,h)}(a)  +
    \bbone_{[h,\infty)}(a)  \mathbb E^Y \big[ f (\tfrac{a}{d-1} + Y )
      \big]^{d-1}  \\
    &\text{for } f \in L^2(\nu) \text{ and } a \in \mathbb R,
  \end{split}
\end{equation}
where $Y \sim \mathcal N(0,\tfrac{d}{d-1})$ as in \eqref{e:defLh}. To see
that indeed $R_h f \in L^2(\nu)$, abbreviate
$\hat f(a) \coloneqq \mathbb E^Y [ f (\tfrac{a}{d-1} + Y )]$ for
$a\in \mathbb R$ and apply the hypercontractivity estimate \eqref{405}
with $p= 2$ and $q= (d-1)^2+1$ to find that
$\| \hat f^{d-1} \|_{L^{q/d-1} (\nu)}
= \| \hat f \|^{d-1}_{L^{q} (\nu)}\leq \| f \|^{d-1}_{L^{2} (\nu)} < \infty$
and thus $\hat f^{d-1} \in L^{\frac{q}{d-1}}(\nu)$. Since
$\tfrac{q}{d-1} = (d-1)+\tfrac1{d-1} \geq 2$, this  implies that
$R_h f \in L^2(\nu)$.

We are actually only interested in the operator $R_h$ for
$h\in \mathbb R$  on the subset
\begin{equation*}
  \mathcal S_h \coloneqq \{ f\in L^2(\nu ) \, | \, 0\le f \le 1 \text{ and }
    f = 1 \text{ on } (-\infty,h) \}.
\end{equation*}
By definition we directly have $R_h: \mathcal S_h \to \mathcal S_h$. In
Theorem \ref{t:uniqueness} we prove that $q_h$ is essentially the unique
solution in $\mathcal S_h$ to the equation $R_h f = f$.

From the  multi-type branching process perspective, the operator $R_h$
can be used to write recurrence relations for generating functionals
related to $\mathcal  C_\rot^h$, cf.~for example  \cite{Har63},
Section~III.7. In particular, by using the notation from \eqref{e:defPa},
\eqref{300} and by applying~\eqref{e:recstep}, we see that
\begin{equation*}
  (R_h f) (a) = \mathbb E^\treegraph_a\Big[
    \prod_{y\in \mathcal Z_1^h}
    f(\varphi_\treegraph(y))\Big]
  \quad  \text{for } f\in \mathcal S_h \text{ and } a\in \mathbb R,
\end{equation*}
where the empty product is interpreted as being equal to 1.
This identity can be extended: define iteratively
\begin{equation*}
  \begin{split}
    R_h^0 f \coloneqq f \quad \text{and} \quad
    R_h^k f \coloneqq R_h^{k-1} (R_h f) \quad \text{for } f \in L^2(\nu) \text{
      and } k \geq 1.
  \end{split}
\end{equation*}
Then one can  prove  by induction on $k\geq 0$ and using \eqref{e:recstep} that
\begin{equation}
  \label{258}
  (R_h^k f)(a) = \mathbb E_a^\treegraph \Big[ \prod_{y\in \mathcal Z_k^h}
    f(\varphi_\treegraph(y)) \Big]
  \quad \text{for }  f \in \mathcal S_h, k\geq 0, a\in \mathbb R.
\end{equation}

We come to the main result of this section.

\begin{theorem}
  \label{t:uniqueness}
  For every $h\in \mathbb R$ the function $q_h$ is the smallest solution
  in $\mathcal S_h$ to the equation $f= R_h f$. More precisely, the only
  solutions in $\mathcal S_h$ to $R_h f = f$ are the function $q_h$ and
  the constant $1$ function. These two functions coincide if $h>h_\star$
  and are distinct if $h<h_\star$.
\end{theorem}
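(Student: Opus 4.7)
The theorem packages four claims:
\emph{(a)} $q_h \in \mathcal S_h$ and $R_h q_h = q_h$;
\emph{(b)} every fixed point $f \in \mathcal S_h$ of $R_h$ satisfies $f \ge q_h$;
\emph{(c)} $q_h \equiv \bbone$ iff $h>h_\star$;
\emph{(d)} the only fixed points in $\mathcal S_h$ are $q_h$ and the constant function $\bbone$.
Parts \emph{(a)}--\emph{(c)} are the straightforward portion, while \emph{(d)} contains the main obstacle.

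For \emph{(a)}: if $a<h$ then $\rot \notin E^{\ge h}_{\varphi_\treegraph}$, so $\mathcal C_\rot^h = \emptyset$ and $q_h(a) = 1 = (R_h q_h)(a)$. For $a\ge h$, condition on $\varphi_\treegraph(\rot)=a$ and split $\mathcal C_\rot^h\cap \treegraphplus$ into its traces in the $d-1$ forward subtrees $U_{x_1},\dots,U_{x_{d-1}}$ rooted at the forward children of $\rot$; by \eqref{275} the traces are conditionally independent, and the trace in $U_{x_i}$ is finite with conditional probability $q_h(\varphi_\treegraph(x_i))$ (the convention $q_h\equiv 1$ on $(-\infty,h)$ absorbs the case $\varphi_\treegraph(x_i)<h$). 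Integrating against $\varphi_\treegraph(x_i)=a/(d-1)+Y_i$ with i.i.d.\ $Y_i\sim \mathcal N(0,d/(d-1))$ yields $q_h(a) = \mathbb E^Y[q_h(a/(d-1)+Y)]^{d-1} = (R_h q_h)(a)$. For \emph{(b)}, the iterated identity \eqref{258} gives $f(a)=\mathbb E_a^\treegraph\big[\prod_{y\in \mathcal Z_k^h} f(\varphi_\treegraph(y))\big]$; since $0\le f\le 1$ and the empty product equals $1$, the integrand is at least $\bbone_{\{\mathcal Z_k^h = \emptyset\}}$, so $f(a)\ge \mathbb P_a^\treegraph[\mathcal Z_k^h = \emptyset]$, and letting $k\to\infty$ (using $\{\mathcal Z_k^h = \emptyset\text{ eventually}\} = \{|\mathcal C_\rot^h\cap \treegraphplus|<\infty\}$ a.s.) yields $f\ge q_h$. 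For \emph{(c)}, the comment at the end of Section~\ref{s:notation} gives $\eta^+(h)=0$ iff $h>h_\star$; combining with \eqref{280} and $q_h\le 1$ gives $q_h=1$ $\nu$-a.e.\ iff $h>h_\star$, and plugging this into the fixed point equation together with the full support of the density of $Y$ upgrades the a.e.\ identity to $q_h\equiv \bbone$ pointwise.

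For \emph{(d)}, take a fixed point $f\ne \bbone$ and put $\phi = 1-f\ge 0$; by \eqref{257}, $\phi$ vanishes on $(-\infty,h)$ and is continuous on $[h,\infty)$, and by the full support of the Gaussian kernel $\phi\not\equiv 0$ forces $\phi>0$ throughout $[h,\infty)$. The concavity of $u\mapsto 1-(1-u)^{d-1}$ on $[0,1]$ (whose tangent at $0$ is $(d-1)u$) gives $\phi(a) = 1-(1-G_\phi(a))^{d-1} \le (d-1)G_\phi(a) = (L_h\phi)(a)$ for every $a\in\mathbb R$, with $G_\phi(a) = \mathbb E^Y[\phi(a/(d-1)+Y)]$, strict on $[h,\infty)$ whenever $\phi\not\equiv 0$. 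Pairing against $\chi_h$ and using self-adjointness of $L_h$ together with $L_h\chi_h=\lambda_h\chi_h$ yields $\langle \phi,\chi_h\rangle_{L^2(\nu)} \le \lambda_h\langle \phi,\chi_h\rangle_{L^2(\nu)}$, strict when $\phi\not\equiv 0$. Since $\chi_h>0$ on $[h,\infty)$ the inner product is positive, so strictness forces $\lambda_h>1$, i.e.\ $h<h_\star$. Contrapositively, for $h\ge h_\star$ the only fixed point is $\bbone$, which together with \emph{(b)} recovers $q_h=\bbone$ in that regime.

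The hard case of \emph{(d)} is $h<h_\star$, where $\lambda_h>1$ and the $\chi_h$-pairing is inconclusive; this is the main obstacle. The plan is to rerun the strict-convexity argument anchored at $q_h$ in place of $\bbone$: set $\psi = f-q_h\ge 0$, and use strict convexity of $u\mapsto u^{d-1}$ (for $d\ge 3$) to get the pointwise lower bound $\psi(a) \ge (d-1)g_{q_h}(a)^{d-2}G_\psi(a) = (DR_h|_{q_h}\psi)(a)$ for $a\ge h$, where $g_{q_h}(a)=q_h(a)^{1/(d-1)}$ and $G_\psi(a) = \mathbb E^Y[\psi(a/(d-1)+Y)]$, strict when $\psi\not\equiv 0$ (using $g_{q_h}(a)<1$ on $[h,\infty)$ for $h<h_\star$ and again the full support of the Gaussian). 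A direct computation based on the elementary inequality $(d-1)x^{d-2} \le 1+x+\dots+x^{d-2}$ for $x\in[0,1]$ shows that $1-q_h$ is a super-eigenfunction of $DR_h|_{q_h}$, so by Collatz--Wielandt the spectral radius of $DR_h|_{q_h}$ is at most $1$. The crux is then to combine this spectral bound with the strict comparison $\psi > DR_h|_{q_h}\psi$ to force $\psi\equiv 0$; the natural spectral route is to identify a positive left Perron eigenfunction with eigenvalue exactly~$1$ and pair it against the strict inequality, while a probabilistic alternative (should the spectral radius be strictly below $1$) is to analyse the bounded $[0,1]$-valued martingale $V_k^f := \prod_{y\in\mathcal Z_k^h} f(\varphi_\treegraph(y))$ (a martingale under the fixed point equation and \eqref{e:recstep}), note that $\mathbb E_a^\treegraph[V_\infty^f] = f(a)$ with $V_\infty^f = 1$ on extinction, and use a spine decomposition along the martingale $M_k^{\ge h}$ from \eqref{209} combined with an ergodicity / $0$--$1$ law argument to conclude $V_\infty^f = \bbone_{\{|\mathcal C_\rot^h\cap \treegraphplus|<\infty\}}$ almost surely, giving $f=q_h$.
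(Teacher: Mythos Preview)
Your treatment of parts \emph{(a)}--\emph{(c)} is correct and matches the paper's Lemmas~\ref{l:receqn} and the minimality half of Lemma~\ref{l:uniquesolution}. Your spectral argument for $h\ge h_\star$ via the pairing $\langle \phi,\chi_h\rangle_{L^2(\nu)} < \lambda_h\langle\phi,\chi_h\rangle_{L^2(\nu)}$ is also valid and is a pleasant shortcut the paper does not take.

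The genuine gap is in the supercritical case $h<h_\star$. You correctly derive $\psi \ge DR_h|_{q_h}\psi$ with strict inequality on $[h,\infty)$ when $\psi\not\equiv 0$, and you correctly verify that $1-q_h$ is a super-eigenfunction of $DR_h|_{q_h}$, giving spectral radius at most~$1$. But this is where your argument stops: you state that ``the crux'' is to combine the strict inequality with the spectral bound, and then list two possible routes (a left Perron eigenfunction at eigenvalue exactly~$1$, or a spine/$0$--$1$ law argument) without executing either. Neither route is automatic. The operator $DR_h|_{q_h}$ is not self-adjoint (it involves multiplication by $G_{q_h}^{\,d-2}$), so the existence of a positive \emph{left} eigenfunction at the top of the spectrum requires separate justification; and if the spectral radius happens to equal~$1$, the strict pointwise inequality $\psi > DR_h|_{q_h}\psi$ by itself does not force $\psi=0$ without such a pairing. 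Your probabilistic alternative is even more schematic: the martingale $V_k^f$ is the right object, but ``spine decomposition combined with an ergodicity/$0$--$1$ law'' is a description of a strategy, not a proof.

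The paper bypasses the spectral analysis of $DR_h|_{q_h}$ entirely. Its approach is to first establish two analytic facts about any non-constant fixed point $f\in\mathcal S_h$ (Lemma~\ref{l:supislessthanone}): namely $\delta:=\sup_{a\ge h}f(a)<1$ and $\lim_{a\to\infty}f(a)=0$. These are proved by an inductive argument over a nested family of intervals covering $[h,\infty)$. With these in hand, the paper writes $f(a)=\mathbb E_a^\treegraph\big[\prod_{y\in\mathcal Z_k^h}f(\varphi_\treegraph(y))\big]$, decomposes the non-extinction contribution dyadically according to the value of the product, and observes that $\{\prod_y f(\varphi_\treegraph(y))\in[2^{-n-1},2^{-n})\}$ forces both $|\mathcal Z_k^h|\le \log_{1/\delta}2^{n+1}$ and all types bounded by $\Lambda_n:=\sup\{a:f(a)\ge 2^{-n-1}\}<\infty$. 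A separate Borel--Cantelli argument (Lemma~\ref{l:transience}) shows that for fixed $K,\Lambda$ the event $\{1\le|\mathcal Z_k^h|\le K,\ \varphi_\treegraph(y)\le\Lambda\ \forall y\in\mathcal Z_k^h\}$ occurs for only finitely many $k$ almost surely, so each dyadic term vanishes in the limit and $f\le q_h$ follows by dominated convergence. This is elementary and self-contained, whereas completing your spectral route would require either a Krein--Rutman-type argument for the adjoint of $DR_h|_{q_h}$ or a proof that its spectral radius is strictly below~$1$.
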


The proof of the theorem is broken into several steps stated as
Lemmas~\ref{l:receqn}--\ref{l:uniquesolution}. The first one  is a
classical observation from the theory of multi-type branching processes.
\begin{lemma}
  \label{l:receqn}
  Let $h\in \mathbb R$. The function $q_h$ satisfies $q_h\in \mathcal S_h$
  and solves the equation $R_h f = f$.
\end{lemma}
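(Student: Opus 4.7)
The plan is to verify the two claims separately: first that $q_h\in\mathcal S_h$, and then that $R_h q_h = q_h$. Membership in $\mathcal S_h$ is almost immediate. Since $q_h$ takes values in $[0,1]$, we have $q_h\in L^2(\nu)$, so it remains to show $q_h\equiv 1$ on $(-\infty,h)$. For $a<h$, under $\mathbb P_a^\treegraph$ we have $\varphi_\treegraph(\rot)=a<h$, hence $\rot\notin E_{\varphi_\treegraph}^{\geq h}$; by the definition \eqref{e:defCo} this forces $\mathcal C_\rot^h=\emptyset$, whence $q_h(a)=1$.

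For the fixed-point equation the case $a<h$ is trivial: by \eqref{257} and the previous step both sides equal $1$. Assume $a\ge h$ and let $x_1,\dots,x_{d-1}$ denote the forward neighbours of $\rot$ (those different from $\overline\rot$), so that $\treegraphplus=\{\rot\}\cup\bigcup_{i=1}^{d-1}U_{x_i}$. Since $\treegraph$ is a tree, any path from a vertex of $U_{x_i}$ to $\rot$ must pass through $x_i$. Because $\rot\in E_{\varphi_\treegraph}^{\geq h}$ on our event, the set $\mathcal C_\rot^h\cap U_{x_i}$ is either empty (when $\varphi_\treegraph(x_i)<h$) or else equals the connected component of $x_i$ in $E_{\varphi_\treegraph}^{\geq h}\cap U_{x_i}$; in particular, $\mathcal C_\rot^h\cap\treegraphplus$ is finite iff each piece $\mathcal C_\rot^h\cap U_{x_i}$ is finite.

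Now I invoke \eqref{275}: under $\mathbb P_a^\treegraph$ the collections $(\varphi_\treegraph(y))_{y\in U_{x_i}}$, $i=1,\dots,d-1$, are independent, and each coincides in law (via the graph isomorphism $U_{x_i}\cong\treegraphplus$ sending $x_i\mapsto\rot$) with the $Y$-mixture of $\mathbb P_{\frac{a}{d-1}+Y}^\treegraph[(\varphi_\treegraph(y))_{y\in\treegraphplus}\in\,\cdot\,]$, where $Y\sim\mathcal N(0,\tfrac d{d-1})$. Under this isomorphism the event $\{\mathcal C_\rot^h\cap U_{x_i}\text{ is finite}\}$ corresponds to $\{|\mathcal C_\rot^h\cap\treegraphplus|<\infty\}$, so its probability equals $\mathbb E^Y[q_h(\tfrac{a}{d-1}+Y)]$; here the fact that $q_h\equiv 1$ on $(-\infty,h)$, established above, is precisely what unifies the two cases $\varphi_\treegraph(x_i)<h$ and $\varphi_\treegraph(x_i)\ge h$ into a single formula. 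Combining with independence of the $d-1$ subtrees yields $q_h(a)=\mathbb E^Y[q_h(\tfrac{a}{d-1}+Y)]^{d-1}=(R_h q_h)(a)$, as desired. No substantial obstacle arises; the argument is a careful unpacking of the recursive structure \eqref{275}, with the only mild subtlety being the uniform treatment of the forward neighbours with $\varphi_\treegraph(x_i)<h$, which is absorbed into the identity $q_h\equiv 1$ on $(-\infty,h)$.
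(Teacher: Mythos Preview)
Your proof is correct and follows essentially the same approach as the paper: both split on whether $a<h$ or $a\ge h$, decompose $\mathcal C_\rot^h\cap\treegraphplus$ into the pieces $\mathcal C_\rot^h\cap U_{x_i}$, and then invoke \eqref{275} to obtain independence and the $Y$-mixture representation, yielding $q_h(a)=(R_h q_h)(a)$. Your write-up is somewhat more explicit about why $q_h\equiv 1$ on $(-\infty,h)$ and how the two cases $\varphi_\treegraph(x_i)<h$ and $\varphi_\treegraph(x_i)\ge h$ are unified, but the argument is the same.
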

\begin{proof}
  The fact that $q_h\in \mathcal S_h$ is clear. To prove the second
  statement, denote
  $S_\treegraph^+(\rot,1) \eqqcolon \{x_1,\dots,x_{d-1} \}$ and recall
  the notation from \eqref{830}. Then, for every $a \in \mathbb R$,
  \begin{equation*}
    \begin{split}
      q_h(a) &\overset{\phantom{\eqref{275}}}{=}
      \mathbb P_a^\treegraph \big[|\mathcal C_\rot^h \cap
        \treegraphplus|<\infty,\varphi_\treegraph(\rot)<h\big]
      +\mathbb P_a^\treegraph \big[|\mathcal C_\rot^h \cap
        \treegraphplus|<\infty,\varphi_\treegraph(\rot)\ge h\big]
      \\& \overset{\phantom{\eqref{275}}}{=} \bbone_{(-\infty,h)}(a)  +
      \bbone_{[h,\infty)}(a) \,
      \mathbb P_a^\treegraph \big[ |  \mathcal C_\rot^h \cap U_{x_i}  |<\infty
        \text{ for } i=1,\ldots,d-1 \big]
      \\& \overset{\eqref{275}}{=} \bbone_{(-\infty,h)}(a)  +
      \bbone_{[h,\infty)}(a) \, \mathbb E^Y \Big[  \mathbb P_{\frac{a}{d-1}+
          Y}^\treegraph \big[ |\mathcal C_\rot^h \cap
          \treegraphplus|<\infty \big]  \Big]^{d-1} \stackrel[\eqref{250}]{\; \eqref{257} \; }{=} (R_h q_h)(a),
    \end{split}
  \end{equation*}
  completing the proof.
\end{proof}

Next, we give various necessary properties of  solutions to $R_h f = f$.
\begin{lemma}
  \label{l:supislessthanone}
  Let $h\in \mathbb R$. Assume that $f\in \mathcal S_h$ solves $R_h f = f$.
  Then $f$ is continuous and positive on $[h,\infty)$. If additionally $f$
  is not identically 1, then $\sup_{a\in [h,\infty)} f(a) < 1$ and
  $\lim_{a\to\infty} f(a) =0$.
\end{lemma}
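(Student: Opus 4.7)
I would prove the four conclusions in turn. For \emph{continuity} on $[h,\infty)$, apply dominated convergence to the recursion $f(a) = (R_h f)(a) = \mathbb E^Y[f(a/(d-1) + Y)]^{d-1}$ (valid for $a \geq h$): boundedness of $f$ and smoothness of the Gaussian density give continuity of $a \mapsto \mathbb E^Y[f(a/(d-1) + Y)]$, and the $(d-1)$-th power preserves continuity. For \emph{positivity}, if $f(a_0) = 0$ for some $a_0 \geq h$, the vanishing $\mathbb E^Y[f(a_0/(d-1)+Y)] = 0$ combined with the strict positivity of the Gaussian density would force $f \equiv 0$ almost everywhere on $\mathbb R$, contradicting $f = 1$ on $(-\infty, h)$.

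Now assume $f$ is not identically $1$. An analogous argument shows $f < 1$ pointwise on $[h, \infty)$: if $f(a_0) = 1$ for some $a_0 \geq h$, then $\mathbb E^Y[f(a_0/(d-1)+Y)] = 1$, which together with $f \leq 1$ forces $f = 1$ almost everywhere on $\mathbb R$; combined with continuity on $[h, \infty)$ and $f = 1$ on $(-\infty, h)$, this gives $f \equiv 1$, a contradiction.

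The crux is $\lim_{a \to \infty} f(a) = 0$. A reverse Fatou estimate applied to the recursion yields $M_\infty \leq M_\infty^{d-1}$ for $M_\infty := \limsup_{a \to \infty} f(a)$, which forces $M_\infty \in \{0, 1\}$. To exclude $M_\infty = 1$, suppose there is $a_n \to \infty$ with $f(a_n) \to 1$; pick any $a_0 \geq h$ with $f(a_0) < 1$, and use continuity to find $\delta, \epsilon > 0$ so that $f \leq 1 - \epsilon$ on $I := [a_0 - \delta, a_0 + \delta]$. Applying the fixed-point identity $f = R_h^k f$ together with~\eqref{258}, one obtains
\begin{equation*}
f(a_n) = \mathbb E_{a_n}^\treegraph\Big[\prod_{y \in \mathcal Z_k^h} f(\varphi_\treegraph(y))\Big] \leq 1 - \epsilon\, \mathbb P_{a_n}^\treegraph\bigl[N_k^I \geq 1\bigr],
\end{equation*}
where $N_k^I := \bigl|\{y \in \mathcal Z_k^h : \varphi_\treegraph(y) \in I\}\bigr|$. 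It thus suffices to show that for a suitable $k = k(a_n)$, the probability $\mathbb P_{a_n}^\treegraph[N_{k(a_n)}^I \geq 1]$ stays bounded below as $n \to \infty$. The plan is a second-moment (Paley--Zygmund) argument: the first moment satisfies $\mathbb E_{a_n}^\treegraph[N_k^I] = (L_h^k \bbone_I)(a_n)$, whose growth is governed by $\lambda_h$ and $\chi_h$ via the spectral decomposition from Proposition~\ref{p:Lhprops} and the asymptotics from Proposition~\ref{p:chibounds}; the second moment is estimated by decomposing pairs $y, y' \in \mathcal Z_k^h$ according to their most recent common ancestor and again expressing the contributions through powers of $L_h$. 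For $k(a_n)$ chosen so that $a_n/(d-1)^{k(a_n)}$ lies in a fixed bounded region relevant to $I$, the ratio $\mathbb E[N_k^I]^2 / \mathbb E[(N_k^I)^2]$ should remain bounded below, so Paley--Zygmund gives a positive lower bound on $\mathbb P[N_k^I \geq 1]$, contradicting $f(a_n) \to 1$.

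Finally, $\sup_{a \geq h} f(a) < 1$ follows by combining $f < 1$ pointwise on $[h, \infty)$, continuity, and $\lim_{a \to \infty} f(a) = 0$: the supremum is then attained on some compact $[h, A]$ on which $f$ is continuous and strictly less than $1$. The main technical obstacle is the second-moment estimate for $N_k^I$, which requires careful bookkeeping of the pairwise dependencies in the multi-type branching process arising from the domain Markov property of the Gaussian free field and from the common-ancestor decomposition, together with the precise growth of the operator $L_h$ supplied by Propositions~\ref{p:Lhprops} and~\ref{p:chibounds}.
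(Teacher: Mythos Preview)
Your treatment of continuity, positivity, and pointwise $f<1$ on $[h,\infty)$ agrees with the paper. The substantive difference is the order in which you attack the last two claims.

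The paper proves $\sup_{a\ge h} f(a) < 1$ \emph{first}, by an elementary induction along the intervals $I_k$ of~\eqref{276}: setting $\Delta := \max\{\tfrac19, \sup_{I_0} f\} < 1$ (compactness plus continuity plus the pointwise bound $f<1$), one shows $\sup_{I_{k+1}} f \le \Delta$ from $\sup_{I_0\cup\dots\cup I_k} f \le \Delta$ via the bound
\[
f(a) \le \big(\Delta(1-\varepsilon)+\varepsilon\big)^2, \qquad a\in I_{k+1},\ \varepsilon := \mathbb P^Y[|Y|\ge 2] < \tfrac14,
\]
together with the algebraic check that $(\Delta(1-\varepsilon)+\varepsilon)^2 \le \Delta$ when $\tfrac19\le\Delta<1$. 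Once $\sup<1$ is in hand, $\lim_{a\to\infty} f(a)=0$ follows immediately from your own reverse-Fatou inequality $\ell \le \ell^{d-1}$, since now $\ell \in [0,1)$ forces $\ell=0$.

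You reverse the order, which makes the exclusion of $\limsup = 1$ the hard step. Your proposed Paley--Zygmund argument for this is only sketched (``should remain bounded below'') and is heavier than needed: carrying out the second-moment computation for $N_k^I$ via the common-ancestor decomposition, uniformly over all $h\in\mathbb R$ and large $a_n$, is genuine work and leans on Propositions~\ref{p:Lhprops} and~\ref{p:chibounds}, whereas the paper's proof of this lemma is self-contained. In fact, to lower-bound $\mathbb P_{a_n}^\treegraph[N_{k(a_n)}^I\ge 1]$ it would already suffice to follow a single branch down from $a_n$ and control the Gaussian increments step by step---and that calculation is essentially the same induction on the $I_k$ that the paper uses to obtain $\sup<1$ directly. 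So the paper's ordering trades your most delicate (and currently incomplete) step for a short inductive bound, after which your easy final implication ($\lim=0 \Rightarrow \sup<1$) becomes unnecessary.
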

\begin{proof}
  For the continuity and positivity we note that for $a \geq h$ one can write
  \begin{equation*}
    \begin{split}
      f(a) &= (R_h f)(a)
      \overset{\eqref{257}}{=}
      \bigg( \int_{\mathbb R} f
        \big(\tfrac{a}{d-1} +y \big) \tfrac{\sqrt{d-1}}{\sqrt{2\pi d}}
        e^{-\frac{(d-1)y^2}{2d}} \, dy \bigg)^{d-1} \\
      &= \bigg( \tfrac{\sqrt{d-1}}{\sqrt{2\pi d}}  e^{-\frac{a^2}{2d(d-1)} }
        \int_{\mathbb R} f(z) e^{-\frac{d-1}{2d}z^2 +  \frac{a}{d}z} \, dz
        \bigg)^{d-1}.
    \end{split}
  \end{equation*}
  The right hand side is continuous in $a$ by the dominated convergence
  theorem and it is also positive since $f$ is non-negative and equal to
  1 on $(-\infty,h)$.

  If $f$ is not identically 1, then there is some $b\geq h$ with $f(b)<1$.
  Hence, by the continuity of $f$ on $[h,\infty)$ previously shown, there
  is an interval of positive Lebesgue measure in $[h,\infty)$ on which $f$
  is strictly smaller than 1. Due to $f=R_h f$ and $0 \leq f \leq 1$,
  this implies that $f(a) < 1$ for all $a\geq h$ by the definition
  \eqref{257} of~$R_h$.

  We will now show that one even has $\sup_{[h,\infty)} f(a) < 1$.
  Consider the intervals $I_k \subseteq [h,\infty)$, $k\geq 0$, from
  \eqref{276}. Since $f<1$  and $f$ is continuous on $[h,\infty)$  and
  $I_0$ is compact, we have
  $\Delta \coloneqq \max\{\frac19  ,  \sup_{a\in I_0} f(a)\} < 1$. If we
  show by induction on $k\geq 0$ that
  $\sup_{a \in I_0\cup\ldots\cup I_k} f(a) \leq \Delta$ for all $k\geq 0$,
  then $\sup_{[h,\infty)} f(a) \leq \Delta < 1$ follows since
  $\bigcup_{k=0}^\infty I_k = [h,\infty)$. Now for $k=0$ the claim is
  true by definition of $\Delta$. So assume it holds for $k\geq 0$. Let
  $Y \sim \mathcal N(0,\frac{d}{d-1})$ and define
  $\varepsilon \coloneqq \mathbb P^Y[|Y|\ge 2]$. Observe that
  $\varepsilon< \tfrac 14$ because $d\geq 3$. For $a\in I_{k+1}$ we can
  estimate
  \begin{equation*}
    \begin{split}
      f(a)  &= (R_h f)(a) \overset{\eqref{257}}{=}  \mathbb E^Y \big[ f
        (\tfrac{a}{d-1} + Y ) \big]^{d-1}
      \leq \mathbb E^Y \big[ f (\tfrac{a}{d-1} + Y )  \big]^{2} \\
      &\leq \Big( \mathbb E^Y \big[ f (\underbrace{\tfrac{a}{d-1}+Y}_{\in I_k \text{ by
          }\eqref{e:Iks}})
          \bbone_{\{|Y| \leq 2\}}  \big]
        +     \mathbb E^Y \big[ \bbone_{\{|Y| > 2\}}  \big] \Big)^{2}
      \leq \big( \Delta \cdot (1-\varepsilon) + \varepsilon \big)^{2}
    \end{split}
  \end{equation*}
  by induction hypothesis. Therefore,
  \begin{equation*}
    \sup_{a\in I_{k+1}} f(a) -\Delta \leq
    \big( \Delta \cdot (1-\varepsilon) + \varepsilon   \big)^{2} - \Delta
    = \underbrace{(\Delta -1)}_{< \, 0}
    \underbrace{(\Delta \cdot (1-\varepsilon)^2 - \varepsilon^2)}_{\geq \, \frac19
      \cdot (\frac34)^2 - (\frac14)^2 \, \geq \, 0 }
    \leq 0.
  \end{equation*}
  This shows that $\sup_{a\in I_{k+1}} f(a) \leq \Delta$, which together
  with the induction hypothesis implies
  $\sup_{a \in I_0\cup\ldots\cup I_{k+1}} f(a) \leq \Delta$ and completes
  the proof of $\sup_{a\in [h,\infty)} f(a) < 1$.

  It remains to show $\lim_{a\to \infty} f(a) = 0$. The assumption
  $R_h f = f$ implies that
  \begin{equation}
    \label{4332}
    \limsup_{a\to \infty} f(a)
    = \limsup_{a\to \infty} (R_h f)(a) \overset{\eqref{257}}{=}
    \big(\limsup_{a\to \infty} \mathbb E^Y \big[ f (\tfrac{a}{d-1} + Y )
        \big] \big)^{d-1}.
  \end{equation}
  Since by Fatou's lemma (using $0\le f \le 1$)
  \begin{equation*}
    \limsup_{a\to \infty}\mathbb E^Y \big[ f (\tfrac{a}{d-1} + Y )   \big]
    \le  \mathbb E^Y \big[\limsup_{a\to \infty}  f (\tfrac{a}{d-1} + Y )   \big],
  \end{equation*}
  we have found
  \begin{equation*}
    \ell \coloneqq \limsup_{a\to \infty} f(a)
    \overset{\eqref{4332}}{\leq} \mathbb E^Y
    \big[\limsup_{a\to \infty}  f (\tfrac{a}{d-1} + Y )   \big] ^{d-1}
    = \mathbb E^Y[\ell]^{d-1} = \ell^{d-1}.
  \end{equation*}
  However, $\ell \in [0,1)$ since $\sup_{a \in [h, \infty)} f(a) < 1$.
  Therefore, the only possibility is $\ell =0$. Hence
  $\lim_{a\to \infty} f(a) = 0$ because $f$ is non-negative.
\end{proof}

The third step of the proof of Theorem~\ref{t:etacontinuity} is the
following statement of `transience'.
\begin{lemma}
  \label{l:transience}
  For $K\ge 1$ and $\Lambda \ge h$  let (see \eqref{300} for the notation)
  \begin{equation}
    \label{e:defAKLambda}
    A_k^{K,\Lambda} \coloneqq \big\{ 1 \leq |
      \mathcal Z_k^h | \leq K , \varphi_\treegraph(y) \leq \Lambda \text{ for
        all } y \in \mathcal Z_k^h    \big\} \quad \text{for } k\ge 0.
  \end{equation}
  Then for every $a \in \mathbb R$, $K\ge 1$ and $\Lambda \ge h$ one has
  \begin{equation*}
    \mathbb P_a^\treegraph [ \limsup_{k\to \infty}A_k^{K,\Lambda} ] = 0.
  \end{equation*}
\end{lemma}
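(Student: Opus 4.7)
The plan is to show that on the event $A_k^{K,\Lambda}$ the conditional probability, given $\mathcal F_k$, that the process $(\mathcal Z_j^h)$ becomes extinct at the very next step is bounded below by a constant $\delta=\delta(K,\Lambda)>0$ independent of $k$, and then to close the argument via L\'evy's conditional Borel--Cantelli lemma. The key observation is that both parameters defining $A_k^{K,\Lambda}$ --- the cardinality bound $K$ and the uniform field bound $\Lambda$ on $\mathcal Z_k^h$ --- are deterministic and finite, which together allow a $k$-independent lower bound on the one-step extinction probability.

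For the uniform extinction bound I would invoke the recursive construction \eqref{e:recstep}: conditionally on $\mathcal F_k$, the $d-1$ children in $\treegraphplus$ of any vertex $y\in \mathcal Z_k^h$ are independent $\mathcal N(\tfrac{\varphi_\treegraph(y)}{d-1},\tfrac{d}{d-1})$ random variables, and children attached to different parents are independent as well. For a parent with value in $[h,\Lambda]$, the probability that a given child has value strictly less than $h$ is therefore at least $\mathbb P[Y< h-\tfrac{\Lambda}{d-1}]>0$ where $Y\sim \mathcal N(0,\tfrac{d}{d-1})$. Since on $A_k^{K,\Lambda}$ there are at most $K(d-1)$ such children in total, the conditional probability that all of them fall below $h$ --- and hence that $\mathcal Z_{k+1}^h=\emptyset$ --- is at least $\delta:=\mathbb P[Y< h-\tfrac{\Lambda}{d-1}]^{K(d-1)}>0$, i.e.,
\[
  \mathbb P_a^\treegraph\big[\mathcal Z_{k+1}^h = \emptyset \,\big|\, \mathcal F_k\big] \geq \delta\, \bbone_{A_k^{K,\Lambda}} \quad\text{for every } k\geq 0.
\]
A trivial but essential trapping property also holds by \eqref{300}: a single occurrence of $\{\mathcal Z_{k+1}^h=\emptyset\}$ forces $\mathcal Z_j^h=\emptyset$, and hence $(A_j^{K,\Lambda})^c$, for every $j\geq k+1$.

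To conclude I would apply the conditional Borel--Cantelli lemma to the adapted events $F_k:=\{\mathcal Z_{k+1}^h=\emptyset\}\in\mathcal F_{k+1}$. On the event $\{\limsup_k A_k^{K,\Lambda}\}$ the display above yields $\sum_k \mathbb P_a^\treegraph[F_k\mid \mathcal F_k] \geq \delta\sum_k \bbone_{A_k^{K,\Lambda}}=\infty$, so the conditional Borel--Cantelli lemma forces $F_k$ to occur for some $k$ almost surely on this event; but by the trapping property a single such occurrence forbids $A_j^{K,\Lambda}$ from ever occurring again, contradicting $\limsup_k A_k^{K,\Lambda}$. Hence $\mathbb P_a^\treegraph[\limsup_k A_k^{K,\Lambda}]=0$. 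I expect the uniform one-step extinction bound to be the only substantive step of the proof: the rest is a standard contrapositive application of conditional Borel--Cantelli, and the specific Gaussian structure of the field is used nowhere else.
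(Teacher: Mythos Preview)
Your proof is correct, and the substantive step --- the uniform one-step extinction bound $\mathbb P_a^\treegraph[\mathcal Z_{k+1}^h=\emptyset\mid\mathcal F_k]\ge \delta\,\bbone_{A_k^{K,\Lambda}}$ with $\delta=\mathbb P^Y[Y<h-\tfrac{\Lambda}{d-1}]^{K(d-1)}$ --- is exactly what the paper proves as well, using the same recursive construction \eqref{e:recstep}.

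The only difference is in how the argument is closed. You invoke L\'evy's conditional Borel--Cantelli lemma: on $\limsup_k A_k^{K,\Lambda}$ the conditional probabilities sum to infinity, forcing extinction at some finite time, which then contradicts $A_j^{K,\Lambda}$ occurring infinitely often. The paper instead introduces the disjoint ``last visit'' events $B_k\coloneqq A_k^{K,\Lambda}\cap\bigcap_{n\ge k+1}(A_n^{K,\Lambda})^c$, observes that $\mathbb P_a^\treegraph[B_k]\ge \mathbb P_a^\treegraph[A_k^{K,\Lambda},\,\mathcal Z_{k+1}^h=\emptyset]\ge \delta\,\mathbb P_a^\treegraph[A_k^{K,\Lambda}]$, and uses disjointness to get $\sum_k\mathbb P_a^\treegraph[A_k^{K,\Lambda}]\le \delta^{-1}<\infty$, after which the \emph{unconditional} Borel--Cantelli lemma finishes the job. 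The paper's route is slightly more elementary (it avoids the conditional version of Borel--Cantelli) and yields the quantitatively stronger conclusion that the probabilities are summable; your route is a bit slicker and makes the trapping structure more explicit. Either is perfectly acceptable here.
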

\begin{proof}
  Observe that the events
  $B_k \coloneqq A_k^{K,\Lambda} \cap \bigcap_{n\geq k+1} (A_n^{K,\Lambda})^\mathsf{c}$
  for $k\geq 0$ are disjoint. Furthermore, denoting
  $S_\treegraph^+(\rot,k+1) \cap U_y \eqqcolon \{y_1,\ldots,y_{d-1} \}$
  for $y\in \mathcal Z_k^h$ and recalling the definition of $\mathcal F_k$
  from \eqref{e:defFk}, it holds that  for $a\in \mathbb R$ and $k\geq 0$
  \begin{equation*}
    \begin{split}
      \mathbb P_a^\treegraph [B_k]  &\overset{\phantom{\eqref{e:recstep}}}{\geq} \mathbb
      P_a^\treegraph \big[A_k^{K,\Lambda} \, , \, \mathcal Z_{k+1}^h = \emptyset
        \big]
      = \mathbb E_a^\treegraph\Big[ \bbone_{A_k^{K,\Lambda}} \, \mathbb
        P_a^\treegraph \big[ \mathcal Z_{k+1}^h = \emptyset   \, \big| \,  \mathcal F_k
          \big] \Big]  \\
      &\overset{\phantom{\eqref{e:recstep}}}{=}   \mathbb E_a^\treegraph\Big[
        \bbone_{A_k^{K,\Lambda}} \,  \mathbb P_a^\treegraph \big[  \bigcap_{y \in
            \mathcal Z_k^h} \bigcap_{i=1}^{d-1} \{ \varphi_\treegraph(y_i) < h\} \, \big|
          \,  \mathcal F_k \big] \Big]  \\
      &\overset{\eqref{e:recstep}}{=}  \mathbb E_a^\treegraph\Big[
        \bbone_{A_k^{K,\Lambda}} \, \prod_{y \in \mathcal Z_k^h} \mathbb P^Y \big[
          \tfrac{\varphi_\treegraph(y)}{d-1} + Y < h \big]^{d-1} \Big] \\
      &\overset{\phantom{\eqref{e:recstep}}}{\geq} \mathbb E_a^\treegraph\Big[
        \bbone_{A_k^{K,\Lambda}} \, \underbrace{\mathbb P^Y \big[
            \tfrac{\Lambda}{d-1} + Y < h \big]^{K(d-1)}}_{\eqqcolon c_{K,\Lambda}} \Big]  =
      c_{K,\Lambda} \, \mathbb P_a^\treegraph [A_k^{K,\Lambda}].
    \end{split}
  \end{equation*}
  Thus for $a\in \mathbb R$ we have
  \begin{equation*}
    \sum_{k=0}^\infty \mathbb P_a^\treegraph
    [A_k^{K,\Lambda}] \leq \frac{1}{c_{K,\Lambda}} \sum_{k=0}^\infty \mathbb
    P_a^\treegraph [B_k]  =  \frac{1}{c_{K,\Lambda}} \mathbb
    P_a^\treegraph \Big[
      \bigcup_{k\geq 0} B_k\Big] < \infty.
  \end{equation*}
  The claim then follows by  the Borel-Cantelli lemma.
\end{proof}

The next lemma proves Theorem \ref{t:uniqueness}. Before that, we
introduce for every $h\in \mathbb R$ the  functions
\begin{equation}
  \label{252}
  q_h^k(a) \coloneqq
  \mathbb P_a^\treegraph [ \mathcal Z_k^h = \emptyset]=
  \mathbb P_a^\treegraph \Big[ |\mathcal C_\rot^h \cap
    S_\treegraph^+(\rot,k)|=0 \Big] \quad \text{for $a \in \mathbb R$, $k\geq
    0$}.
\end{equation}
It can be easily seen that $q_h^k \in  \mathcal S_h$ for $k\geq 0$ and
$\bbone_{(-\infty,h)}(a) = q_h^0(a) \leq q_h^1(a) \leq q_h^2(a) \leq \ldots$
for $a \in \mathbb R$. In particular,
$\lim_{k\to \infty} q_h^k(a) = q_h(a)$ for all $a\in \mathbb R$ by
\eqref{250}. In addition, applying \eqref{258} to the function
$f=\bbone_{(-\infty,h)}$ implies that
\begin{equation}
  \label{831}
  q_h^k = R_h^k \bbone_{(-\infty,h)}
  \quad \text{for $k\geq 0$}.
\end{equation}

\begin{lemma}
  \label{l:uniquesolution}
  Let $h\in \mathbb R$. The only solutions in $\mathcal S_h$ to
  $R_h f = f$ are the function $q_h$ and the constant $1$ function. These
  two functions coincide if $h>h_\star$ and are distinct if $h<h_\star$.
\end{lemma}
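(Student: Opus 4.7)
The plan is to handle the three claims in the statement in turn. Both $q_h$ (by Lemma~\ref{l:receqn}) and the constant $1$ function (directly from \eqref{257}) lie in $\mathcal S_h$ and solve $R_h f = f$. The dichotomy between the two reduces to \eqref{280} combined with the observation at the end of Section~\ref{s:notation} that $\eta^+(h) > 0$ precisely when $h < h_\star$: for $h > h_\star$ we obtain $\int q_h\,d\nu = 1$, which forces $q_h = 1$ $\nu$-almost everywhere, and the continuity of $q_h$ on $[h,\infty)$ supplied by Lemma~\ref{l:supislessthanone} upgrades this to a pointwise identity on $[h,\infty)$; for $h < h_\star$ the same integral relation shows $q_h$ is not identically $1$.

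The heart of the proof is uniqueness: any $f \in \mathcal S_h$ with $R_h f = f$ and $f \not\equiv 1$ must equal $q_h$. The inequality $q_h \le f$ follows from the monotonicity of $R_h$ on $\mathcal S_h$ (inherited from the monotonicity of $\mathbb E^Y$ and of $x \mapsto x^{d-1}$ on $[0,\infty)$): since $q_h^0 = \bbone_{(-\infty,h)} \le f$ trivially, induction yields $q_h^k = R_h^k q_h^0 \le R_h^k f = f$ for every $k \ge 0$, and $q_h^k \uparrow q_h$ gives $q_h \le f$. For the reverse inequality I plan to exploit the identity \eqref{258},
\begin{equation*}
f(a) \;=\; (R_h^k f)(a) \;=\; \mathbb E_a^\treegraph \Big[ \prod_{y \in \mathcal Z_k^h} f(\varphi_\treegraph(y)) \Big],
\end{equation*}
valid for every $k \ge 0$ and $a \in \mathbb R$, and to show that the random product converges $\mathbb P_a^\treegraph$-almost surely to $\bbone_{\{|\mathcal C_\rot^h \cap \treegraphplus| < \infty\}}$. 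Since the integrand is bounded by $1$, dominated convergence then yields $f(a) = q_h(a)$.

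On the extinction event the convergence is immediate because $\mathcal Z_k^h$ is eventually empty (the empty product equals $1$). The main obstacle is the complementary non-extinction event, where the assumption $f \not\equiv 1$ must be converted into a statement that drives the product to $0$; this is precisely what Lemmas~\ref{l:supislessthanone} and~\ref{l:transience} are tailored for. Lemma~\ref{l:supislessthanone} supplies $s < 1$ with $\sup_{[h,\infty)} f \le s$ and, for every $\varepsilon > 0$, some $\Lambda_\varepsilon$ such that $f(a) < \varepsilon$ for $a > \Lambda_\varepsilon$. If the random product stays $\ge \varepsilon$ along a subsequence $(k_n)$, then the bound $\prod \le s^{|\mathcal Z_{k_n}^h|}$ forces $|\mathcal Z_{k_n}^h| \le \log_s \varepsilon =: K_\varepsilon$, while the fact that each factor individually is $\ge \varepsilon$ (since the remaining factors are $\le 1$) forces $\varphi_\treegraph(y) \le \Lambda_\varepsilon$ for every $y \in \mathcal Z_{k_n}^h$. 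Hence the event $A_{k_n}^{K_\varepsilon,\Lambda_\varepsilon}$ from \eqref{e:defAKLambda} holds infinitely often, a $\mathbb P_a^\treegraph$-null event by Lemma~\ref{l:transience}; a union bound over rational $\varepsilon > 0$ completes the a.s.\ convergence and hence the proof.
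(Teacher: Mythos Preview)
Your proof is correct and follows essentially the same route as the paper: both establish $q_h \le f$ by iterating $R_h$ on $\bbone_{(-\infty,h)}$, and both obtain $f \le q_h$ from the identity \eqref{258} combined with Lemmas~\ref{l:supislessthanone} and~\ref{l:transience}. The only difference is cosmetic: the paper decomposes the expectation of the product dyadically and bounds each piece by $\mathbb P_a^\treegraph[A_k^{K_n,\Lambda_n}]$, whereas you argue almost-sure convergence of the product to $\bbone_{\{|\mathcal C_\rot^h \cap \treegraphplus|<\infty\}}$ and then invoke dominated convergence once---your packaging is arguably cleaner, but the substance is identical.
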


\begin{proof}
  From Lemma~\ref{l:receqn} we know that $q_h \in \mathcal S_h$ and
  $R_h q_h = q_h$. The same is of course true for the constant 1
  function. We first claim that every solution in $\mathcal S_h$ to
  $R_h f =f$ satisfies $f\ge q_h$. Indeed, if $f \in \mathcal S_h$ is
  such a solution, then  $R_h^k f=f$ for all $k\geq 0$. Also, the fact
  that  $f \in \mathcal S_h$ implies $f \geq \bbone_{(-\infty,h)}$. Hence
  $f=R_h^k f \geq  R_h^k  \bbone_{(-\infty,h)} =q_h^k$ for all $k\geq 0$
  by \eqref{257} and \eqref{831}. By letting $k$ tend to infinity we find
  $f\geq q_h$, proving the claim. In particular, if $q_h \equiv 1$
  (e.g.~when $h> h_\star$, see below \eqref{280}), then we have
  $f\equiv 1$ and thus $R_h f=f $ has a unique solution.

  Now assume that $q_h \not \equiv 1$ (e.g.~when $h< h_\star$, see below
    \eqref{280}) and that $f\not\equiv 1$ is a solution to $R_h f = f$.
  We claim that $f=q_h$. As we have already shown $f\geq q_h$, it remains
  to prove $f\leq q_h$. To see this, observe that by
  Lemma~\ref{l:supislessthanone} we know that
  $\delta \coloneqq \sup_{a \in [h, \infty)} f(a) \in (0,1)$. Let
  $m\geq 0$ be such that
  $\delta \coloneqq \sup_{a \in [h, \infty)} f(a) \in \big[\frac{1}{2^{m+1}},\frac{1}{2^m}\big)$.
  Then for  $a\in \mathbb R$ and $k\geq 0$ one has
  \begin{align}
    f(a)
    & \overset{\phantom{\eqref{258}}}{=} (R_h^k f)(a)
    \overset{\eqref{258}}{=}  \mathbb E_a^\treegraph \Big[ \bbone_{\{ \mathcal
          Z_k^h = \emptyset \} } \prod_{y\in \mathcal Z_k^h} f(\varphi_\treegraph(y))
      \Big] +  \mathbb E_a^\treegraph \Big[ \bbone_{\{ \mathcal Z_k^h \neq
          \emptyset \} }  \prod_{y\in \mathcal Z_k^h} f(\varphi_\treegraph(y)) \Big]  \nonumber \\
    & \overset{\eqref{252}}{\leq} q_h^k(a) + \sum_{n\geq m} \frac{1}{2^n} \mathbb
    P_a^\treegraph \Big[\mathcal Z_k^h \neq \emptyset \, , \, \prod_{y\in \mathcal
        Z_k^h} f(\varphi_\treegraph(y)) \in
      \big[\tfrac{1}{2^{n+1}},\tfrac{1}{2^n}\big)   \Big].
    \label{260}
  \end{align}
  Note that for the events appearing on the right hand side of \eqref{260} one
  has
  \begin{equation}
    \label{261}
    \begin{split}
      &\big\{\mathcal Z_k^h \neq \emptyset \, , \, \textstyle \prod_{y\in \mathcal
          Z_k^h}  f(\varphi_\treegraph(y)) \in
        \big[\tfrac{1}{2^{n+1}},\tfrac{1}{2^n}\big)  \big\} \\
      &\quad \subseteq \big\{ | \mathcal Z_k^h | \geq 1 \, , \, \delta^{|\mathcal
          Z_k^h|} \geq \tfrac{1}{2^{n+1}} \, , \, f(\varphi_\treegraph(y)) \geq
        \tfrac{1}{2^{n+1}} \text{ for all } y \in \mathcal Z_k^h    \big\}    \\
      &\quad \subseteq \big\{ | \mathcal Z_k^h | \geq 1 \, , \, 2^{n+1} \geq
        (1/\delta)^{|\mathcal Z_k^h|} \, , \, f(\varphi_\treegraph(y)) \geq
        \tfrac{1}{2^{n+1}} \text{ for all } y \in \mathcal Z_k^h    \big\}    \\
      &\quad \subseteq \big\{ 1 \leq | \mathcal Z_k^h | \leq K_n \, , \,
        \varphi_\treegraph(y) \leq \Lambda_n \text{ for all } y \in \mathcal Z_k^h
        \big\}
      \overset{\eqref{e:defAKLambda}}= A_k^{K_n,\Lambda_n}
    \end{split}
  \end{equation}
  with $K_n \coloneqq  \log_{1/\delta} (2^{n+1})$ and
  $\Lambda_n \coloneqq \sup\{a\in \mathbb R \, | \, f(a) \geq \tfrac{1}{2^{n+1}} \}$.
  We observe that for $n\geq m$ it holds $K_n \geq 1$ since then
  $1/\delta \leq 2^{m+1} \leq 2^{n+1}$. Moreover,
  $h \leq \Lambda_n < \infty$ since $f\in \mathcal S_h$ (so $f=1$ on
    $(-\infty,h)$) and $\lim_{a\to \infty} f(a) = 0$  by
  Lemma~\ref{l:supislessthanone}. As a consequence,
  Lemma~\ref{l:transience} and Fatou's lemma imply
  $\lim_{k\to\infty}\mathbb P_a^\treegraph[A_k^{K_n,\Lambda_n}]=0$.
  Therefore, by using the dominated convergence theorem, for
  $a \in \mathbb R$ one has
  \begin{equation*}
    f(a)  \stackrel[\eqref{261}]{\eqref{260}}{\leq} \lim_{k\to \infty} \bigg(
      q_h^k(a) + \sum_{n\geq m} \frac{1}{2^n} \mathbb P_a^\treegraph \big[
        A_k^{K_n,\Lambda_n}  \big]   \bigg)  =
    q_h(a).
  \end{equation*}
  This implies that $f=q_h$, completing the proof.
\end{proof}

As last result of Section \ref{s:recursiveequation} we compute the
Fr\'echet derivative of the operators $(R_h)_{h\in \mathbb R}$ defined in
\eqref{257}. This technical result is one of the main ingredients for
proving the existence of exponential moments of $|\mathcal C_\rot^h|$ in
the subcritical phase (Section \ref{s:subcritical}). Incidentally, let us
mention that its proof is based on the hypercontractivity estimate
\eqref{405} and that the precise relation between $p$ and $q$ in the
estimate is vital (for $p=2$).

\begin{proposition}
  \label{1007}
  Let $h\in \mathbb R$ and consider the operator
  $R_h: L^2(\nu) \to L^2(\nu)$ from \eqref{257}. Then the Fr\'echet
  derivative of $R_h$ at $f\in L^2(\nu)$ is given by
  $A^f_h: L^2(\nu) \to L^2(\nu)$ with
  \begin{equation}
    \label{1005}
    A^f_h g \coloneqq \bbone_{[h,\infty)} \cdot (d-1) \mathbb E^Y
    [f(\tfrac{\cdot}{d-1} + Y )   ]^{d-2} \, \mathbb E^Y [g(\tfrac{\cdot}{d-1} + Y
        )    ].
  \end{equation}
  In particular, if $g\in L^2(\nu)$ vanishes on $(-\infty,h)$, then
  $A^1_h g = L_h g$, where $A^1_h$ is the Fr\'echet derivative of $R_h$
  at the constant function 1 and $L_h$ is given in \eqref{e:defLh}.
  Furthermore, for all $\varepsilon>0$ there exists $r>0$ such that
  $\|A^f_h g \|_{L^2(\nu)} \leq (\lambda_h  + \varepsilon) \, \|g\|_{L^2(\nu)}$
  if $g\in L^2(\nu)$ vanishes on $(-\infty,h)$ and
  $\|f-1\|_{L^2(\nu)} \leq r$.
\end{proposition}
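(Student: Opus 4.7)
Since for $a<h$ both $R_h(f+g)(a)$ and $(R_h f)(a)$ equal $1$, the indicator $\bbone_{[h,\infty)}$ factors cleanly out of every difference. Abbreviating $u(a) \coloneqq \mathbb E^Y[f(\tfrac{a}{d-1}+Y)]$ and $v(a) \coloneqq \mathbb E^Y[g(\tfrac{a}{d-1}+Y)]$ (so by linearity $v$ is the analogous transform of $g$ and $u-1$ that of $f-1$), the binomial theorem yields for $a\geq h$
\begin{equation*}
(R_h(f+g))(a) - (R_hf)(a) - (A^f_h g)(a) = \sum_{k=2}^{d-1}\binom{d-1}{k} u(a)^{d-1-k} v(a)^k,
\end{equation*}
and \eqref{1005} is read off as the linear term. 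The task is therefore to show that this remainder is $o(\|g\|_{L^2(\nu)})$ in $L^2(\nu)$, which simultaneously confirms Fr\'echet differentiability and identifies the derivative.

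For this, set $q \coloneqq (d-1)^2+1$ and apply the hypercontractivity bound \eqref{405} with $p=2$: $\|u\|_{L^q(\nu)}\leq \|f\|_{L^2(\nu)}$, $\|v\|_{L^q(\nu)}\leq \|g\|_{L^2(\nu)}$, and (used later) $\|u-1\|_{L^q(\nu)}\leq \|f-1\|_{L^2(\nu)}$. The crucial numerical inequality is $q\geq 2(d-1)$, which is equivalent to $(d-2)^2\geq 0$; it is exactly what is needed to pick H\"older conjugates $s_1 = q/(2(d-1-k))$ and $s_2 = q/(q-2(d-1-k))$ (with the obvious limiting choices when $d-1-k=0$) satisfying $2(d-1-k)s_1\leq q$ and $2ks_2\leq q$. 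H\"older's inequality together with the hypercontractive bounds above then gives
\begin{equation*}
\|u^{d-1-k} v^k\|_{L^2(\nu)} \leq \|u\|_{L^q(\nu)}^{d-1-k}\|v\|_{L^q(\nu)}^{k} \leq \|f\|_{L^2(\nu)}^{d-1-k}\|g\|_{L^2(\nu)}^{k} \quad \text{for } 1\leq k\leq d-1.
\end{equation*}
With $k\geq 2$ this shows each remainder term is $O(\|g\|_{L^2(\nu)}^2)$; the case $k=1$ also shows that $A^f_h$ is a bounded linear operator on $L^2(\nu)$.

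For the identification $A^1_h g = L_h g$ when $g$ vanishes on $(-\infty,h)$, simply note that then $g(\tfrac{a}{d-1}+Y) = g(\tfrac{a}{d-1}+Y)\bbone_{[h,\infty)}(\tfrac{a}{d-1}+Y)$, so \eqref{1005} at $f\equiv 1$ reproduces \eqref{e:defLh}. For the perturbation bound, expand
\begin{equation*}
u^{d-2}-1 = \sum_{k=1}^{d-2}\binom{d-2}{k}(u-1)^k
\end{equation*}
and repeat the H\"older/hypercontractivity argument with $u-1$ in place of one copy of $u$ and $v$ as the remaining factor to get
\begin{equation*}
\|A^f_h g - A^1_h g\|_{L^2(\nu)} \leq (d-1)\sum_{k=1}^{d-2}\binom{d-2}{k}\|f-1\|_{L^2(\nu)}^{k}\|g\|_{L^2(\nu)} \leq C_d\,\|f-1\|_{L^2(\nu)}\|g\|_{L^2(\nu)}
\end{equation*}
whenever $\|f-1\|_{L^2(\nu)}\leq 1$. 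Combined with $\|A^1_h g\|_{L^2(\nu)} = \|L_h g\|_{L^2(\nu)} \leq \lambda_h\|g\|_{L^2(\nu)}$ from \eqref{17}, choosing $r\coloneqq\min(1,\varepsilon/C_d)$ produces the announced inequality. The main obstacle throughout is the bookkeeping of H\"older exponents against the hypercontractive exponent $q=(d-1)^2+1$; everything reduces to the elementary check $q\geq 2(d-1)$, which is precisely the place where the \emph{exact} relation between $p$ and $q$ in \eqref{405} enters essentially.
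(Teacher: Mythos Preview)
Your proof is correct and follows essentially the same route as the paper: binomial expansion to isolate the linear term, hypercontractivity \eqref{405} with $p=2$ and $q=(d-1)^2+1$ to place $u,v$ in $L^q(\nu)$, and H\"older to control the higher-order products, with the same key numerical check $q\ge 2(d-1)$ (equivalently $p_{d-1}=q/(d-1)\ge 2$). The only cosmetic differences are that the paper organises the H\"older step via the exponents $p_i=q/i$ and the generalised H\"older inequality, and for the perturbation it factors $u^{d-2}-1=(u-1)\sum_{i=0}^{d-3}u^i$ rather than your binomial expansion of $((u-1)+1)^{d-2}$; both reach the same estimate.
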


\begin{proof}
  We start with some observations. For $u \in L^2(\nu)$ let us abbreviate
  $\hat u(a) \coloneqq \mathbb E^Y \big[ u (\tfrac{a}{d-1} + Y ) \big]$,
  $a\in \mathbb R$. We further set
  $p_i \coloneqq   \tfrac{(d-1)^2+1}{i} \geq 2$ for $i=1,\ldots,d-1$.
  Then for $u\in L^2(\nu)$ one has
  \begin{equation}
    \label{421}
    \| \hat u^{i} \|_{L^{p_i} (\nu)}   = \| \hat u \|^i_{L^{(d-1)^2+1} (\nu)}
    \overset{\eqref{405}}{\leq} \|  u \|^i_{L^{2} (\nu)} < \infty.
  \end{equation}
  Now if $u,v,w \in L^2(\nu)$ and $i,j,k \in \{0,\ldots,d-1\}$ with
  $i+j+k \leq d-1$, then one has $\hat u^i \in L^{p_i}(\nu)$,
  $\hat v^j \in L^{p_j}(\nu)$ and $\hat w^k \in L^{p_k}(\nu)$ by
  \eqref{421}, where we put $p_0\coloneqq \infty$, and therefore
  \begin{equation}
    \label{1004}
    \begin{split}
      \| \hat u^i \, \hat v^j \, \hat w^k \|_{L^2(\nu)}
      &\overset{2 \leq p_{i+j+k}}{\leq} \|  \hat u^i \, \hat v^j \, \hat w^{k}
      \|_{L^{p_{i+j+k}}(\nu)}
      \overset{(*)}{\leq} \| \hat u^i \|_{L^{p_i}(\nu)}  \| \hat v^j
      \|_{L^{p_j}(\nu)}     \| \hat w^{k} \|_{L^{p_{k}}(\nu)}   \\
      &\ \ \overset{\eqref{421}}{\leq} \ \  \| u \|^i_{{L^2}(\nu)} \| v
      \|^j_{{L^2}(\nu)}      \|  w  \|^{k}_{{L^2}(\nu)} < \infty,
    \end{split}
  \end{equation}
  where in $(*)$ we use the generalised H\"older inequality.

  To compute the Fr\'echet derivative of $R_h$ note that for
  $f,g\in L^2(\nu)$ one has
  \begin{equation}
    \label{1003}
    \begin{split}
      R_h(f+g) - R_h f
      &\overset{\eqref{257}}{=}  \bbone_{[h,\infty)}\cdot  \big( (\hat f + \hat
          g)^{d-1} -\hat f^{d-1} \big) \\
      &\overset{\phantom{\eqref{401}}}{=}  \bbone_{[h,\infty)}\cdot
      \sum_{i=0}^{d-2}\tbinom{d-1}{i}\hat f^i \hat g^{d-1-i}
      \overset{\phantom{\eqref{401}}}{=} A^f_h g + E^f_h g,
    \end{split}
  \end{equation}
  where $A^f_h g = \bbone_{[h,\infty)}\cdot (d-1) \hat f^{d-2} \hat g$ is
  the function defined in \eqref{1005} and the operator
  $E^f_h : L^2(\nu) \to L^2(\nu)$ is given by
  \begin{equation}
    \label{1002}
    E^f_h g \coloneqq  \bbone_{[h,\infty)}\cdot
    \sum_{i=0}^{d-3}\tbinom{d-1}{i}\hat f^i \hat g^{d-1-i}.
  \end{equation}
  Note that the map $A^f_h$ is linear and also bounded since
  $\sup_{\|g\|_{L^2(\nu)} \leq 1}  \|A^f_h g \|_{L^2(\nu)}
  \leq  {(d-1)} \sup_{\|g\|_{L^2(\nu)} \leq 1}
  \| \hat f^{d-2} \hat g  \|_{L^2(\nu)}< \infty$
  by \eqref{1004}. To conclude that $A^f_h$ is the Fr\'echet derivative
  of $R_h$ at $f$ it remains to show that
  \begin{equation}
    \label{425}
    \frac{\| R_h(f+g) - R_h f  - A^f_h g \|_{L^2(\nu)}}{\|g\|_{L^2(\nu)}}
    \overset{\eqref{1003}}{=}  \frac{\| E^f_h g \|_{L^2(\nu)}}{\|g\|_{L^2(\nu)}}
    \to   0 \quad \text{ if } \|g\|_{L^2(\nu)} \to 0.
  \end{equation}
  This is the case because
  \begin{equation*}
    \begin{split}
      \| E^f_h g \|_{L^2(\nu)}
      \stackrel[\eqref{1004}]{\eqref{1002}}{\leq} \sum_{i=0}^{d-3} \tbinom{d-1}{i} \|
      f \|^i_{{L^2}(\nu)} \|g \|^{d-1-i}_{{L^2}(\nu)},
    \end{split}
  \end{equation*}
  implying \eqref{425}. Thus $A^f_h$ is the Fr\'echet derivative of $R_h$ at $f$.

  From \eqref{1005} and \eqref{e:defLh} we directly see that
  $A^1_h g = L_h g$ if $g\in L^2(\nu)$ vanishes on $(-\infty,h)$. It
  remains to show the second part of the statement.  We have
  $\|A^f_h g \|_{L^2(\nu)} \leq \|A^f_h g - A^1_h g\|_{L^2(\nu)}  +  \|A^1_h g \|_{L^2(\nu)}$.
  For $g \in L^2(\nu)$ with  $g= 0$ on $(-\infty,h)$ one obtains
  $\|A^1_h g \|_{L^2(\nu)} = \|L_h g \|_{L^2(\nu)} \leq  \lambda_h \| g \|_{L^2(\nu)}$
  by \eqref{17}. Moreover, the formula
  $b^{d-2}-1 = (b-1)(1+b+\ldots+b^{d-3})$ and the triangle inequality imply
  \begin{equation*}
    \|A^f_h g - A^1_f g \|_{L^2(\nu)} \overset{\eqref{1005}}{\leq} (d-1)\, \| \hat
    g \, (\hat f^{d-2} -1) \|_{L^2(\nu)} \leq (d-1)\sum_{i=0}^{d-3} \| \hat g \,
    (\hat f - 1)\hat f^i \|_{L^2(\nu)}
  \end{equation*}
  and therefore $\|A^f_h g - A^1_f g \|_{L^2(\nu)} \leq (d-1) \| g \|_{L^2(\nu)}
  \|f-1\|_{L^2(\nu)} \sum_{i=0}^{d-3} \| f \|^i_{L^2(\nu)}$ by \eqref{1004}.
  All in all we showed
  \begin{equation}
    \label{1006}
    \|A^f_h g \|_{L^2(\nu)}  \leq  \Big( \lambda_h + (d-1)\|f-1\|_{L^2(\nu)}
      \sum_{i=0}^{d-3} \| f \|^i_{L^2(\nu)}\Big) \,  \|  g  \|_{L^2(\nu)}.
  \end{equation}
  Now let $\varepsilon>0$ and take $r>0$ such that
  $(d-1) ((1+r)^{d-2}-1) \leq \varepsilon$. Then if
  $\| f-1 \|_{L^2(\nu)} \leq r$, and hence also
  $\|f\|_{L^2(\nu)} \leq 1+r$, we have
  $\|A^f_h g \|_{L^2(\nu)} \leq (\lambda_h + \varepsilon) \, \|g\|_{L^2(\nu)}$
  by \eqref{1006}. This concludes the proof.
\end{proof}

%%%%%%%%%%%%%%%%%%%%%%%%%%%%%%%%%%%%%%
%%%%%%%%%%%%%%%%%%%%%%%%%%%%%%%%%%%%%%
%%%%%%%%%%%%%%%%%%%%%%%%%%%%%%%%%%%%%%
%%%%%%%%%%%%%%%%%%%%%%%%%%%%%%%%%%%%%%
%%%%%%%%%%%%%%%%%%%%%%%%%%%%%%%%%%%%%%
%%%%%%%%%%%%%%%%%%%%%%%%%%%%%%%%%%%%%%
%%%%%%%%%%%%%%%%%%%%%%%%%%%%%%%%%%%%%%
%%%%%%%%%%%%%%%%%%%%%%%%%%%%%%%%%%%%%%

%SECTION 4

\section{Behaviour of the level sets in the supercritical phase}
\label{s:supercritical}

In this section we study the behaviour of the level sets of the Gaussian
free field on $\treegraph$  for $h < h_\star$. The main goal is to show
that the percolation probabilities $\eta$ and $\eta^+$ are continuous
functions of the level $h$ on the interval $(-\infty, h_\star)$
(Theorem~\ref{t:etacontinuity}, corresponding to \eqref{e:defeta}) and to
prove that $| \mathcal C_{\rot}^{h}| $ grows exponentially in the radius
with probability bounded away from zero when $h<h_\star$
(Theorem~\ref{t:expgrowth}, corresponding to \eqref{e:preexpgrow}). Along
the way we also show the equivalence of the probabilities of forward
percolation and of a non-vanishing martingale limit (Proposition
  \ref{equivalence}, corresponding to \eqref{mainresultmartingale}).
These results  essentially come as an application of
Theorem~\ref{t:uniqueness} from Section \ref{s:recursiveequation}. For
this section recall the measure $\nu$ defined above \eqref{e:defLh}.

%%%%%%%%%%%%%%%%%%%%%%%%%%%%%%%%%%%%%%
%%%%%%%%%%%%%%%%%%%%%%%%%%%%%%%%%%%%%%
%%%%%%%%%%%%%%%%%%%%%%%%%%%%%%%%%%%%%%
%%%%%%%%%%%%%%%%%%%%%%%%%%%%%%%%%%%%%%

%SECTION 4.1

\subsection{Continuity of the percolation probability}
\label{ss:continuity}

In this section we analyse   the continuity properties of the percolation
probabilities $\eta$ and $\eta^+$, and show \eqref{e:defeta} in Theorem
\ref{t:etacontinuity}. Recall the functions $q_h$, $h \in \mathbb R$,
introduced in \eqref{250} and their relation with $\eta^+$ reported in
\eqref{280}.

\begin{theorem}
  \label{t:etacontinuity}
  The functions $\eta$ and $\eta^+$ are left-continuous on $\mathbb R$
  and continuous on $\mathbb R\setminus\{h_\star\}$.
\end{theorem}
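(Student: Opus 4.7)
The plan is to split the argument into three parts. First, I will show left-continuity of $\eta$ and $\eta^+$ on all of $\mathbb R$ by a direct compactness argument on the tree using that $\treegraph$ is locally finite. Second, right-continuity on $(h_\star, \infty)$ is immediate because both functions vanish there. Third, right-continuity on $(-\infty, h_\star)$ will be obtained by combining the pointwise monotone limit of $q_{h_n}$ with the uniqueness statement in Theorem~\ref{t:uniqueness}; this is where the bulk of the work sits.

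For left-continuity, fix $h_n \uparrow h$. Because $\treegraph$ is a tree, any $x \in \bigcap_n \mathcal C_\rot^{h_n}$ forces $\varphi_\treegraph \geq h_n$ along the unique tree-path from $x$ to $\rot$ for every $n$, hence $\geq h$, so $x \in \mathcal C_\rot^h$; together with the trivial $\mathcal C_\rot^h \subseteq \mathcal C_\rot^{h_n}$ this yields $\mathcal C_\rot^{h_n} \downarrow \mathcal C_\rot^h$ set-wise. The less obvious inclusion $\bigcap_n \{|\mathcal C_\rot^{h_n}| = \infty\} \subseteq \{|\mathcal C_\rot^h| = \infty\}$ follows by a sphere-by-sphere compactness argument: if each $\mathcal C_\rot^{h_n}$ is infinite it must meet every sphere $S_\treegraph(\rot,k)$, and the decreasing intersection over $n$ of these non-empty subsets of the finite set $S_\treegraph(\rot,k)$ is non-empty and coincides with $\mathcal C_\rot^h \cap S_\treegraph(\rot,k)$, so $\mathcal C_\rot^h$ meets every sphere and is infinite. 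Continuity of $\mathbb P^\treegraph$ from above then gives $\eta(h_n) \to \eta(h)$, and the same argument restricted to $\treegraphplus$ yields left-continuity of $\eta^+$.

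For right-continuity at $h < h_\star$, fix $h_n \downarrow h$. Since $q_h(a)$ is non-decreasing in $h$, the sequence $q_{h_n}$ is pointwise non-increasing in $n$ and bounded in $[0,1]$, so it converges pointwise to some $q^*$; as $q_{h_n} = 1$ on $(-\infty, h_n)$, we get $q^* = 1$ on $(-\infty, h)$ and $q^* \in \mathcal S_h$. Starting from $R_{h_n} q_{h_n} = q_{h_n}$ (Lemma~\ref{l:receqn}) and passing to the limit in \eqref{257} by dominated convergence, the only discrepancy with $R_h q^* = q^*$ occurs at the single point $a = h$, which has $\nu$-measure zero; hence $R_h q^* = q^*$ $\nu$-a.e. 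Theorem~\ref{t:uniqueness} then forces $q^* = q_h$ or $q^* \equiv 1$. To exclude the latter I pick $h' \in (h, h_\star)$; for large $n$, $h_n < h'$, so monotonicity of $\eta^+$ gives $\eta^+(h_n) \geq \eta^+(h') > 0$ (positivity below $h_\star$ was noted at the end of Section~\ref{s:notation}). Dominated convergence yields $\int q^* \, d\nu = \lim_n (1 - \eta^+(h_n)) \leq 1 - \eta^+(h') < 1$, so $q^* \not\equiv 1$ and therefore $q^* = q_h$. Consequently $\eta^+(h_n) = 1 - \int q_{h_n}\,d\nu \to 1 - \int q_h\,d\nu = \eta^+(h)$. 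For $\eta$ I will use the analogous formula $\eta(h) = \int_h^\infty (1 - \mathbb E^Y[q_h(\tfrac{a}{d-1} + Y)]^d)\,d\nu(a)$, obtained from \eqref{275} by conditioning on $\varphi_\treegraph(\rot)$ and exploiting independence of the $d$ forward subtrees at the root; dominated convergence together with $q_{h_n} \to q_h$ $\nu$-a.e.\ then gives $\eta(h_n) \to \eta(h)$.

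The main obstacle is the identification $q^* = q_h$: a priori, a jump in $\eta^+$ as one approaches a supercritical level from above is not excluded by soft monotonicity arguments alone. The uniqueness result in Theorem~\ref{t:uniqueness}, which itself relies on the recursive representation \eqref{254} of $\varphi_\treegraph$, is exactly what closes this gap.
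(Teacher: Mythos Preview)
Your proof is correct. For right-continuity below $h_\star$ you take essentially the paper's route: form the pointwise limit $q^*=\lim_n q_{h_n}$, check it solves the recursive equation away from the single point $a=h$, and invoke Theorem~\ref{t:uniqueness} to identify it with $q_h$; the paper rules out $q^*\equiv 1$ by noting that a decreasing limit of functions each $\not\equiv 1$ is itself $\not\equiv 1$, whereas you go through $\eta^+(h')>0$ for an intermediate $h'\in(h,h_\star)$, which is equally clean. For left-continuity the arguments genuinely differ: the paper writes $\eta^+(h)=\lim_k \mathbb P^\treegraph[\mathcal C_\rot^h\cap S^+_\treegraph(\rot,k)\neq\emptyset]$ as a decreasing limit of functions continuous in $h$ (the finite-ball vector has a density), reads off upper-semicontinuity, and combines with monotonicity; your K\"onig-type compactness argument on finite spheres is a more direct, purely set-theoretic alternative that avoids any appeal to densities. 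One cosmetic point worth tightening: Theorem~\ref{t:uniqueness} is phrased for pointwise fixed points in $\mathcal S_h$, so after noting $R_h q^*=q^*$ only $\nu$-a.e.\ you should, as the paper does, redefine the limit at the single point $a=h$ to be $(R_h q^*)(h)$ (equivalently, replace $q^*$ by $R_h q^*$, which agrees with $q^*$ $\nu$-a.e.\ and is a genuine pointwise fixed point) before invoking the theorem; since you only use $q^*=q_h$ $\nu$-a.e.\ downstream, this changes nothing substantive.
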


\begin{proof}
  Note that
  \begin{equation}
    \label{A.1}
    \eta^+(h) = \mathbb P^\treegraph \Big[ \bigcap_{k\geq 1} \big\{
        \mathcal C_\rot^h\cap  S_\treegraph^+(\rot,k ) \neq \emptyset
        \big \}  \Big]
    = \lim_{k\to \infty} \mathbb P^\treegraph \big[
      \mathcal C_\rot^h\cap  S_\treegraph^+(\rot,k ) \neq \emptyset
      \big].
  \end{equation}
  Under $\mathbb P^\treegraph$ the vector
  $(\varphi_\treegraph(y))_{y\in B_\treegraph^+(\textup{o},k)}$ has a
  density  and thus
  $h\mapsto \mathbb P^\treegraph [\mathcal C_\rot^h\cap  S_\treegraph^+(\rot,k ) \neq \emptyset]$
  is a continuous function. Therefore by \eqref{A.1}, $\eta^+$ is a
  decreasing limit of continuous functions and hence upper
  semicontinuous. As $\eta^+$ is a non-increasing function, it is thus
  left-continuous. With the obvious changes in \eqref{A.1} one can also
  show the left-continuity of $\eta$.

  To show the right-continuity on $\mathbb R\setminus \{h_\star\}$
  observe first that  if $h>h_\star$, then $\eta(h) = \eta^+(h)=0$ by
  definition and the comment at the end of Section \ref{s:notation}. So
  it remains to prove the right-continuity on $(-\infty,h_\star)$. Fix
  $h<h_\star$ and assume $(h_\ell)_{\ell\geq0}$ is a sequence satisfying
  $h_\ell \downarrow h$ and $h_\ell < h_\star$ for all $\ell\geq 0$. We
  will show that $\lim_{\ell \to \infty} \eta^+(h_\ell) = \eta^+(h)$ and
  $\lim_{\ell \to \infty} \eta(h_\ell) = \eta(h)$. Observe that by
  \eqref{280} and the dominated convergence theorem the former follows
  from the claim
  \begin{equation}
    \label{1232}
    \lim_{\ell \to \infty} q_{h_\ell}(a)
    = q_h(a) \quad \text{for } a\in \mathbb R\setminus \{h\}.
  \end{equation}
  Actually, also $\lim_{\ell \to \infty} \eta(h_\ell) = \eta(h)$ follows
  from \eqref{1232} by a double application of the dominated convergence
  theorem since
  \begin{equation*}
    \begin{split}
      \eta(h_\ell)
      &\stackrel{\eqref{e:defPa}}{=}
      \int_{\mathbb R} \mathbb P_a^\treegraph [|\mathcal C_{\rot}^{h_\ell}|=\infty] \, d\nu(a)
      \stackrel{}{=}
      \int_{\mathbb R} \big(1-\mathbb P_a^\treegraph [|\mathcal C_{\rot}^{h_\ell}|<\infty] \big)
      \bbone_{[h_\ell,\infty)}(a) \, d\nu(a)  \\
      &\stackrel{\phantom{\eqref{e:defPa}}}{=}
      \int_{\mathbb R} \big(1-\mathbb P_a^\treegraph
        [|\mathcal C_{\rot}^{h_\ell} \cap U_{x_i}|<\infty \text{ for all } i=1,\ldots,d] \big)
      \bbone_{[h_\ell,\infty)}(a) \, d\nu(a)  \\
      &\stackrel[\eqref{250}]{\eqref{275}}{=}
      \int_{\mathbb R} \big(1-\mathbb E^Y[q_{h_\ell}(\tfrac{a}{d-1}+Y)]^d \big)
      \bbone_{[h_\ell,\infty)}(a) \, d\nu(a).
    \end{split}
  \end{equation*}
  Hence it remains to show \eqref{1232}.

  Define the two auxiliary functions $\widetilde q_h$ and $q'_h$ on
  $\mathbb R$ by
  \begin{equation}
    \label{283}
    \widetilde q_h(a) \coloneqq \lim_{\ell\to \infty} q_{h_\ell}(a)
    = \inf_{\ell\geq 0} q_{h_\ell}(a)   \quad \text{for $a \in \mathbb R$}
  \end{equation}
  and
  \begin{equation}
    \label{1233}
    q'_h(a) \coloneqq \begin{cases}
      \widetilde q_h(a), &\text{if } a \in \mathbb R\setminus \{h \} \\
      (R_ h\widetilde q_h)(h), &\text{if } a =h.
    \end{cases}
  \end{equation}
  We will now apply Theorem~\ref{t:uniqueness} to show $q'_h = q_h$. From
  this the claim \eqref{1232} follows by \eqref{1233} and \eqref{283}.

  Since $h_\ell < h_\star$, one has $q_{h_\ell} \not \equiv 1 $ for all
  $\ell \geq 0$ (see below \eqref{280}). This implies
  $\widetilde q_{h} \not \equiv 1 $ by \eqref{283} (being a decreasing
    limit) and hence also $q'_h \not \equiv 1$ by \eqref{1233}. Moreover
  if $a< h$, then $a<h_\ell$ for all $\ell \geq 0$, which yields
  $q_{h_\ell}(a)=1$ for all $\ell\geq0$. This implies $q'_{h}(a) = 1$
  for $a<h$ by \eqref{283} and \eqref{1233}. Thus $q'_h \in \mathcal S_h$.
  Finally, for $a>h$ and $\ell \geq 0$ such that $h_\ell \leq a$, one
  finds by Lemma \ref{l:receqn} and \eqref{257} that
  $q_{h_\ell}(a) = \mathbb E^Y \big[ q_{h_\ell} (\tfrac{a}{d-1} + Y )\big]^{d-1}$.
  If we let $\ell$ tend to infinity on both sides, then \eqref{283} and
  the dominated convergence theorem give
  $\widetilde q_{h}(a) =
  \mathbb E^Y \big[ \widetilde q_{h} (\tfrac{a}{d-1} + Y )  \big]^{d-1}$
  for all $a>h$. This together with \eqref{1233} shows $q'_h= R_h q'_h$.
  By Theorem~\ref{t:uniqueness} we conclude that $q'_h = q_h$. The proof
  is complete.
\end{proof}

%%%%%%%%%%%%%%%%%%%%%%%%%%%%%%%%%%%%%%
%%%%%%%%%%%%%%%%%%%%%%%%%%%%%%%%%%%%%%
%%%%%%%%%%%%%%%%%%%%%%%%%%%%%%%%%%%%%%
%%%%%%%%%%%%%%%%%%%%%%%%%%%%%%%%%%%%%%

%SECTION 4.2

\subsection{Percolation probability and non-triviality of the martingale limit}
\label{ss:martingalelimit}

Recall the martingale $(M^{\geq h}_k)_{k\geq 0}$ from \eqref{209}. We now
apply Theorem \ref{t:uniqueness} from Section~\ref{s:recursiveequation}
to show  in Proposition \ref{equivalence} the equivalence
\eqref{mainresultmartingale} between the probability of non-vanishing of
the martingale limit and $\eta^+(h)$. From the discussion at the end of
Section \ref{subsection1.1} we already know that
$\mathbb P^\treegraph \big[ M^{\geq h}_\infty > 0\big] = \eta^+(h) = 0$
for $h>h_\star$. We now prove that the first equality remains true also
if $h<h_\star$.

\begin{proposition}
  \label{equivalence}
  One has
  \begin{equation}
    \label{222}
    \eta^+(h) = \mathbb P^\treegraph \big[ M^{\geq h}_\infty > 0\big] \quad
    \text{for all $h\in \mathbb R \setminus \{h_\star\}$}.
  \end{equation}
\end{proposition}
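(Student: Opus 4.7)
The plan is to introduce the candidate function $\tilde{q}_h(a) \coloneqq \mathbb{P}_a^{\treegraph}[M_\infty^{\geq h} = 0]$ and show, via the uniqueness statement of Theorem \ref{t:uniqueness}, that $\tilde{q}_h = q_h$, from which \eqref{222} will then follow by integrating against $\nu$ and invoking \eqref{280}. The case $h > h_\star$ is immediate from the remarks at the end of Section \ref{subsection1.1} (both sides vanish), so I would concentrate on $h < h_\star$.

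First I would verify that $\tilde{q}_h \in \mathcal{S}_h$: it is $[0,1]$-valued and hence in $L^2(\nu)$, and for $a < h$ the root is not in the level set, hence $\mathcal{Z}_k^h = \emptyset$ for all $k$, so $M_k^{\geq h} \equiv 0$ and $\tilde{q}_h(a) = 1$. The substantive step is to establish $R_h \tilde{q}_h = \tilde{q}_h$. Let $x_1, \dots, x_{d-1}$ denote the children of $\rot$ in $\treegraphplus$, and write $M_k^{\geq h,(i)}$ for the analogous martingale based on the forward subtree $U_{x_i}$ (with $x_i$ playing the role of the root). The definition \eqref{209}, combined with the fact that $y \in S_\treegraph^+(\rot, k) \cap U_{x_i}$ belongs to $\mathcal{C}_\rot^h$ iff it belongs to the corresponding component based at $x_i$ in $U_{x_i}$, yields the branching decomposition
\[
M_k^{\geq h} = \lambda_h^{-1} \sum_{i=1}^{d-1} \bbone_{\{\varphi_\treegraph(x_i) \geq h\}} \, M_{k-1}^{\geq h,(i)} \qquad (k \geq 1, \text{ on } \{\varphi_\treegraph(\rot) \geq h\}).
\]
Since $M_k^{\geq h,(i)} \equiv 0$ whenever $\varphi_\treegraph(x_i) < h$, passing to the a.s. limit gives $\{M_\infty^{\geq h} = 0\} = \bigcap_{i=1}^{d-1} \{M_\infty^{\geq h,(i)} = 0\}$ on $\{\varphi_\treegraph(\rot) \geq h\}$. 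Combining the independence of the subfields $(\varphi_\treegraph(y))_{y \in U_{x_i}}$ under $\mathbb{P}_a^{\treegraph}$ with the distributional identity from \eqref{275} (so that the conditional law of $M_\infty^{\geq h,(i)}$ given $\varphi_\treegraph(x_i) = b$ is the law of $M_\infty^{\geq h}$ under $\mathbb{P}_b^{\treegraph}$) then produces, for $a \geq h$,
\[
\tilde{q}_h(a) = \mathbb{E}^Y\bigl[\tilde{q}_h\bigl(\tfrac{a}{d-1} + Y\bigr)\bigr]^{d-1} = (R_h \tilde{q}_h)(a),
\]
as required.

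With $\tilde{q}_h \in \mathcal{S}_h$ solving $R_h f = f$, Theorem \ref{t:uniqueness} forces $\tilde{q}_h \in \{q_h, 1\}$. The constant $1$ is ruled out by \eqref{278}, which gives $\mathbb{P}^{\treegraph}[M_\infty^{\geq h} > 0] > 0$ when $h < h_\star$, so that $\int \tilde{q}_h \, d\nu < 1$. Therefore $\tilde{q}_h = q_h$, and integrating against $\nu$ together with \eqref{280} yields
\[
1 - \mathbb{P}^{\treegraph}[M_\infty^{\geq h} > 0] = \int_{\mathbb{R}} q_h(a) \, d\nu(a) = 1 - \eta^+(h),
\]
which is \eqref{222}.

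The main technical obstacle will be the rigorous justification of the branching decomposition and of the resulting identity $R_h \tilde{q}_h = \tilde{q}_h$: one must carefully handle the a.s. convergence of the subtree martingales $M_k^{\geq h,(i)}$, check that the indicator $\bbone_{\{\varphi_\treegraph(x_i) \geq h\}}$ is absorbed by $M_\infty^{\geq h,(i)}$, and invoke \eqref{275} for independence and the distributional identity. Once this is in place, the conclusion reduces to a direct appeal to Theorem \ref{t:uniqueness} combined with the non-triviality \eqref{278}.
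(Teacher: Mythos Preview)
Your proposal is correct and follows essentially the same route as the paper: define $m_h(a)=\mathbb P_a^\treegraph[M_\infty^{\ge h}=0]$, check $m_h\in\mathcal S_h$, use the branching decomposition of $M_k^{\ge h}$ over the children of $\rot$ together with \eqref{275} to get $R_h m_h=m_h$, and then invoke Theorem~\ref{t:uniqueness} with \eqref{278} to rule out the constant~$1$ solution when $h<h_\star$. The only cosmetic difference is that you phrase the decomposition via the re-rooted subtree martingales $M_{k-1}^{\ge h,(i)}$, whereas the paper keeps the summands $M_{k,i}^{\ge h}$ indexed from the original root; the content is identical.
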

\begin{proof}
  For every $h\in \mathbb R$ we introduce the  function
  $m_h(a) \coloneqq   \mathbb P_a^\treegraph \big[ M^{\geq h}_\infty = 0 \big]$
  for $a \in \mathbb R$, where  $\mathbb P_a^\treegraph$ is the
  conditional probability defined in \eqref{e:defPa}. We  note that
  \begin{equation}
    \label{279}
    \int_{\mathbb R} m_h(a) \, d\nu(a) =
    \mathbb E^\treegraph[m_h(\varphi_\treegraph(\rot))]
    \overset{\eqref{e:defPa}}{=}  \mathbb P^\treegraph \big[ M^{\geq h}_\infty =
      0\big].
  \end{equation}
  By \eqref{280} and \eqref{279} it is enough to show that for
  $h\neq h_\star$ one has $q_h = m_h$. This will follow from  Theorem
  \ref{t:uniqueness}. Note that $m_h \in \mathcal S_h$ since
  $m_h(a) \geq \mathbb P_a^\treegraph[\mathcal Z_0^h = \emptyset]
  = \bbone_{(-\infty,h)}(a)$
  by \eqref{209} and \eqref{e:defPa}. We also have that $R_h m_h = m_h$.
  Indeed, recall \eqref{830} and denote
  $S_\treegraph^+(\rot,1) \eqqcolon \{x_1,\ldots,x_{d-1} \}$. Let us
  write
  $M_{k,i}^{\geq h} \coloneqq \lambda_h^{-k}
  \sum_{y\in  \mathcal Z_k^h \cap U_{x_i}} \chi_h(\varphi_\treegraph(y)) $
  for $k\geq 1$ and $i=1,\ldots,d-1$, so that
  $M_k^{\geq h} = \sum_{i=1}^{d-1} M_{k,i}^{\geq h}$ for $k\geq 1$. Then
  for $a \in \mathbb R$
  \begin{equation*}
    \begin{split}
      m_h(a) &\overset{\phantom{\eqref{e:defPa}}}{=}   \mathbb P_a^\treegraph
      \big[\varphi_\treegraph(\rot)<h \, , \, M_\infty^{\geq h} = 0 \big] +
      \mathbb P_a^\treegraph \big[\varphi_\treegraph(\rot)\geq h \, , \,
        M_\infty^{\geq h} = 0 \big] \\
      &\stackrel{\eqref{e:defPa}}{=} \bbone_{(-\infty,h)}(a) +
      \bbone_{[h,\infty)}(a) \, \mathbb P_a^\treegraph \big[ \lim_{k\to \infty}
        M_{k,i}^{\geq h} = 0 \text{ for } i=1,\ldots,d-1 \big] \\
      &\overset{\eqref{275}}{=} \bbone_{(-\infty,h)}(a)  +
      \bbone_{[h,\infty)}(a) \, \mathbb E^Y \Big[  \mathbb P_{\frac{a}{d-1}+
          Y}^\treegraph \big[ M_\infty^{\geq h} = 0 \big]  \Big]^{d-1}
      \overset{\, \eqref{257} \,}{=} (R_h m_h)(a).
    \end{split}
  \end{equation*}
  Now if $h>h_\star$, then by Theorem~\ref{t:uniqueness} we find
  $m_h = q_h \equiv 1$. On the other hand, if $h<h_\star$, then
  \eqref{279} and \eqref{278} imply that $m_h$ is not the constant 1
  function and so $m_h=q_h$ by Theorem~\ref{t:uniqueness} again. The
  proof is complete.
\end{proof}

%%%%%%%%%%%%%%%%%%%%%%%%%%%%%%%%%%%%%%
%%%%%%%%%%%%%%%%%%%%%%%%%%%%%%%%%%%%%%
%%%%%%%%%%%%%%%%%%%%%%%%%%%%%%%%%%%%%%
%%%%%%%%%%%%%%%%%%%%%%%%%%%%%%%%%%%%%%

%SECTION 4.3

\subsection{Geometrical growth
  \texorpdfstring{of $| \mathcal C_{\normalfont \rot}^{h}|$}{}
  in the supercritical phase}
\label{ss:geometricgrowth}

We come to the proof of \eqref{e:preexpgrow}, essentially that   for
$h<h_\star$ the number of vertices in $\treegraphplus$ connected over
distance $k$ above level $h$ to the root $\rot \in \treegraph$ grows
exponentially  in $k$ with positive probability. Recall the notation from
\eqref{300}.

\begin{theorem}
  \label{t:expgrowth}
  Let $h < h_\star$ (so that $\lambda_h>1$, see
    Proposition~\ref{p:Lhprops}). Then
  \begin{equation*}
    \lim_{k \to \infty}
    \mathbb P^\treegraph \Big[ \big| \mathcal Z_k^h \big| \geq
      \frac{\lambda_h^{k}}{k^2} \Big]  =  \eta^+(h) > 0.
  \end{equation*}
\end{theorem}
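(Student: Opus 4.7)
My plan is to exploit the martingale identity $\lambda_h^k M_k^{\geq h} = \sum_{y \in \mathcal Z_k^h} \chi_h(\varphi_\treegraph(y))$ from~\eqref{209}, the upper bound $\chi_h(a) \leq c\,a^{\kappa_h}$ from Proposition~\ref{p:chibounds}(i) with $\kappa_h := 1 - \log_{d-1}(\lambda_h)$, and the identity $\eta^+(h) = \mathbb P^\treegraph[M_\infty^{\geq h}>0]$ from Proposition~\ref{equivalence}. The quantitative engine of the proof is that $\kappa_h \in (0,1)$ for $h<h_\star$ by Remark~\ref{r:slopeofchi}(ii): the sublinear growth of $\chi_h$ is what forces $|\mathcal Z_k^h|$ to be polynomially close to $\lambda_h^k$ whenever the sum of $\chi_h$-values is of order $\lambda_h^k$.

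The upper bound $\limsup_k \mathbb P^\treegraph[|\mathcal Z_k^h| \geq \lambda_h^k/k^2] \leq \eta^+(h)$ follows immediately from the inclusion $\{|\mathcal Z_k^h| \geq \lambda_h^k/k^2\} \subseteq \{\mathcal Z_k^h \neq \emptyset\}$ together with the monotone convergence $\mathbb P^\treegraph[\mathcal Z_k^h \neq \emptyset] \downarrow \eta^+(h)$.

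For the lower bound, I would fix $\epsilon>0$ and set $M_k := c_\epsilon\,k^{2/\kappa_h}$, where $c_\epsilon>0$ is chosen so that $c\,c_\epsilon^{\kappa_h} \leq \epsilon$ (with $c$ the constant from \eqref{e:upperboundchi}). Consider the event $F_k(\epsilon) := \{\varphi_\treegraph(y) \leq M_k \text{ for all } y \in S_\treegraph^+(\rot,k)\}$. On $F_k(\epsilon) \cap \{M_k^{\geq h} \geq \epsilon\}$, Proposition~\ref{p:chibounds}(i) — extended to the bounded region $[h,d-1]$ by continuity and positivity of $\chi_h$ — yields $\chi_h(\varphi_\treegraph(y)) \leq \epsilon\,k^2$ for every $y \in \mathcal Z_k^h$, so summing the bound over $\mathcal Z_k^h$ and using $\sum_{y\in\mathcal Z_k^h}\chi_h(\varphi_\treegraph(y)) \geq \epsilon\lambda_h^k$ gives $|\mathcal Z_k^h| \geq \lambda_h^k/k^2$. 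A union bound over the $d(d-1)^{k-1}$ vertices of $S_\treegraph^+(\rot,k)$ combined with the Gaussian tail bound for $\varphi_\treegraph(y) \sim \mathcal N(0, \tfrac{d-1}{d-2})$ shows $\mathbb P^\treegraph[F_k(\epsilon)^\mathsf{c}] \leq d(d-1)^{k-1} \exp(-c' c_\epsilon^2\,k^{4/\kappa_h})$, which tends to $0$ since $\kappa_h<1$ forces $4/\kappa_h > 1$.

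Combining the two estimates gives $\mathbb P^\treegraph[|\mathcal Z_k^h| \geq \lambda_h^k/k^2] \geq \mathbb P^\treegraph[M_k^{\geq h} \geq \epsilon] - o(1)$. Applying Fatou's lemma to the indicators $\bbone_{\{M_k^{\geq h} \geq \epsilon\}}$ and using the almost-sure convergence $M_k^{\geq h} \to M_\infty^{\geq h}$ yields $\liminf_k \mathbb P^\treegraph[M_k^{\geq h} \geq \epsilon] \geq \mathbb P^\treegraph[M_\infty^{\geq h} > \epsilon]$; sending $\epsilon \downarrow 0$ and invoking Proposition~\ref{equivalence}, together with the strict positivity $\eta^+(h)>0$ for $h<h_\star$ recorded at the end of Section~\ref{s:notation}, completes the argument. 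The main subtlety is calibrating $M_k$: one needs $M_k^{\kappa_h} \lesssim k^2$ to convert the martingale lower bound into the desired bound on $|\mathcal Z_k^h|$, yet simultaneously $M_k \gg \sqrt{k}$ to beat the exponential size of $S_\treegraph^+(\rot,k)$ in the Gaussian tail estimate — and the inequality $\kappa_h<1$ (equivalently $\lambda_h>1$) is precisely what makes both constraints simultaneously achievable.
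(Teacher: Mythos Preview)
Your argument is correct and follows essentially the same route as the paper: bound $\chi_h$ from above via Proposition~\ref{p:chibounds}, control the maximum of $\varphi_\treegraph$ over $S_\treegraph^+(\rot,k)$ by a Gaussian tail plus union bound, and then convert a lower bound on the martingale $M_k^{\geq h}$ into the desired lower bound on $|\mathcal Z_k^h|$, invoking Proposition~\ref{equivalence} at the end.

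Two minor remarks. First, the paper's version is a bit more streamlined: it uses the cruder linear bound $\chi_h(a)\le c_h a$ (valid for $h<h_\star$ since $\kappa_h<1$) and the $k$-dependent threshold $\{M_k^{\geq h}\ge 1/k\}$ in place of your fixed $\epsilon$. This avoids your two-step limit ($k\to\infty$ then $\epsilon\downarrow 0$) while keeping the Gaussian tail--versus--union-bound balance trivially satisfied. Second, $|S_\treegraph^+(\rot,k)|=(d-1)^k$, not $d(d-1)^{k-1}$; this is harmless for the estimate.
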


\begin{proof}
  Note that one directly has
  \begin{equation*}
    \limsup_{k \to \infty} \,  \mathbb P^\treegraph \Big[ \big|  \mathcal Z_k^h
      \big| \geq \frac{\lambda_h^{k}}{k^2} \Big]  \leq    \limsup_{k \to \infty} \,
    \mathbb P^\treegraph \big[ \mathcal C_{\rot}^{h} \cap
      S_\treegraph^+(\rot,k)  \neq \emptyset  \big] \overset{\eqref{251}}{=} \eta^+(h).
  \end{equation*}
  Thus we only have  to find a corresponding lower bound. By Fatou's lemma
  \begin{equation}
    \label{211}
    \begin{split}
      \eta^+(h)
      &\overset{\eqref{222}}{=} \mathbb P^\treegraph \big[ M^{\geq h}_\infty > 0\big]
      \leq \mathbb P^\treegraph \big[ M^{\geq h}_k \geq \tfrac1k
        \text{ for all } k \text{ large enough}\big]
      \\ &  \overset{\phantom{\eqref{222}}}{\leq}
      \liminf_{k\to \infty} \mathbb P^\treegraph \big[ M^{\geq h}_k
        \geq \tfrac1k \big]
      \\&\overset{\phantom{\eqref{222}}}{\leq} \liminf_{k \to \infty} \Big( \mathbb P^\treegraph
        \big[M^{\geq h}_k \geq \tfrac1k \, , \,   A_k^h \big] + \mathbb P^\treegraph
        \big[  (A_k^h)^\mathsf{c}  \big] \Big),
    \end{split}
  \end{equation}
  where we introduced the event
  \begin{equation*}
    A_k^h \coloneqq \Big\{
      \sup_{ y \in S_\treegraph^+(\rot,k)}
      \chi_h(\varphi_\treegraph(y)) \leq k \Big\} \quad\text{for } k\geq 0.
  \end{equation*}
  On the event $A_k^h$ the inequality $M^{\geq h}_k \geq \frac1k$ implies
  $| \mathcal Z_k^h | \geq \frac{\lambda_h^{k}}{k^2}$ by \eqref{209}. Hence
  \begin{equation}
    \label{212}
    \mathbb P^\treegraph \big[M^{\geq h}_k \geq \tfrac1k \, , \,   A_k^h \big]
    \leq \,  \mathbb P^\treegraph \Big[ \big| \mathcal Z_k^h \big| \geq
      \frac{\lambda_h^{k}}{k^2} \Big] .
  \end{equation}
  To deal with the event $(A_k^h)^\mathsf{c}$ note that by
  Proposition~\ref{p:chibounds} and Remark~\ref{r:slopeofchi} (here
    $h<h_\star$) one has $\chi_h(a) \leq c_h a$ for $a\geq h$ and
  $\chi_h(a) =0$ for $a<h$. Thus, for  $y \in \treegraph$ and for $k\geq 0$
  \begin{equation*}
    \begin{split}
      &\mathbb P^\treegraph \big[   \chi_h(\varphi_\treegraph(y)) > k      \big]
      = \mathbb P^\treegraph \big[   \chi_h(\varphi_\treegraph(y)) > k \, , \,
        \varphi_\treegraph(y) \geq h   \big] \\
      &\qquad \leq \mathbb P^\treegraph \big[  c_h \varphi_\treegraph(y) > k
        \, , \,  \varphi_\treegraph(y) \geq h   \big]
      \overset{\eqref{0.2}}{\leq}  \exp\Big(-\frac{
          k^2}{2c_h^2g_\treegraph(\rot,\rot)} \Big),
    \end{split}
  \end{equation*}
  where in the last step we use the exponential Markov inequality. Hence,
  by a union bound, for $k\geq 0$
  \begin{equation}
    \label{210}
    \mathbb P^\treegraph \big[  (A_k^h)^\mathsf{c} \big]  \leq   \underbrace{\big|
      S_\treegraph^+(\rot,k) \big|}_{= (d-1)^k} \exp\Big(-\frac{
        k^2}{2c_h^2g_\treegraph(\rot,\rot)} \Big) \xrightarrow{k \to
      \infty} 0.
  \end{equation}
  From \eqref{211}, \eqref{212} and \eqref{210} we have that
  $\liminf_{k \to \infty}
  \mathbb P^\treegraph \big[ |  \mathcal Z_k^h |
    \geq \frac{\lambda_h^{k}}{k^2} \big] \geq \eta^+(h)$
  and the proof of Theorem~\ref{t:expgrowth} follows.
\end{proof}

%%%%%%%%%%%%%%%%%%%%%%%%%%%%%%%%%%%%%%
%%%%%%%%%%%%%%%%%%%%%%%%%%%%%%%%%%%%%%
%%%%%%%%%%%%%%%%%%%%%%%%%%%%%%%%%%%%%%
%%%%%%%%%%%%%%%%%%%%%%%%%%%%%%%%%%%%%%
%%%%%%%%%%%%%%%%%%%%%%%%%%%%%%%%%%%%%%
%%%%%%%%%%%%%%%%%%%%%%%%%%%%%%%%%%%%%%
%%%%%%%%%%%%%%%%%%%%%%%%%%%%%%%%%%%%%%
%%%%%%%%%%%%%%%%%%%%%%%%%%%%%%%%%%%%%%

%SECTION 5

\section{Exponential moments
  \texorpdfstring{of $|\mathcal C_{\normalfont \rot}^{h}|$}{}
  in the subcritical phase}
\label{s:subcritical}

This section proves that for every $h>h_\star$ the cardinality of the
connected component  of the level set of $\varphi_\treegraph$ above level
$h$ in $\treegraphplus$ containing the root $\rot\in \treegraph$ has
exponential moments and  actually, as a function of the value of
$\varphi_\treegraph(\rot)$, these exponential moments do not grow too
fast. This is the content of Theorem \ref{t:exponentialboundong} below
(corresponding to \eqref{e:mainresultsubcritical}). In its proof we will
use Proposition \ref{1007} from Section \ref{s:recursiveequation}.

To state the result, we define for every $h \in \mathbb R$ and $\delta> 0$
the (potentially infinite) function
\begin{equation}
  \label{573}
  g_{h,\delta}(a) \coloneqq  \mathbb E_a^\treegraph \Big[ (1+\delta)^{| \mathcal
      C_{\rot}^{h}\cap \treegraphplus   |} \Big]  \quad \text{for
    $a\in \mathbb R$,}
\end{equation}
where we use the notation for the conditional distribution of
$\varphi_\treegraph$ given $\varphi_\treegraph(\rot)=a$ defined in
\eqref{e:defPa}.  Observe that (recall $\nu$ from above \eqref{e:defLh})
\begin{equation}
  \label{832}
  \int_{\mathbb R} g_{h,\delta}(a) \, d\nu(a)
  = \mathbb
  E^\treegraph[g_{h,\delta}(\varphi_\treegraph(\rot))]
  \overset{\eqref{e:defPa}}{=} \mathbb E^\treegraph \Big[ (1+\delta)^{| \mathcal
      C_{\rot}^{\treegraph,h}\cap \treegraphplus   |} \Big].
\end{equation}

Note that if $q_h(a)<1$ for $q_h$ from \eqref{250}  (in particular this
  is  the case in the supercritical phase $h<h_\star$ for $a\ge h$), then
$g_{h,\delta }(a)$ is infinite. The main goal of this section is to show
that in the subcritical phase $h>h_\star$ there exists $\delta>0$ such
that the right hand side of \eqref{832} is finite and such that
$g_{h,\delta}(a)$ does not grow too fast as $a$ tends to infinity. Recall
the space $L^2(\nu)$ defined in \eqref{e:defLh}.

\begin{theorem}
  \label{t:exponentialboundong}
  Let $h > h_\star$. Then there exists $\delta_h>0$ such that
  \begin{equation}
    \label{e:gisinl2}
    g_{h,\delta_h} \in L^2(\nu).
  \end{equation}
  Moreover, $g_{h,\delta_h}$ equals 1 on $(-\infty,h)$ and
  $g_{h,\delta_h}(a)$ is finite for all $a\in \mathbb R$. Finally,
  $g_{h,\delta_h}$ is continuous on $[h,\infty)$ and for all $\gamma > 0$
  there exist $c_{h,\gamma}>0$ and $c_{h,\gamma}'>0$ such that
  \begin{equation}
    \label{415}
    g_{h,\delta_h}(a) \leq c_{h,\gamma} \exp(c_{h,\gamma}' a^{1+\gamma}) \quad
    \text{for all $a\geq h$}.
  \end{equation}
  In particular, \eqref{832} and \eqref{e:gisinl2}  imply
  $\mathbb E^\treegraph
  \big[ (1+\delta_h)^{| \mathcal C_{\rot}^{h}\cap \treegraphplus  |} \big]
  <\infty$.
\end{theorem}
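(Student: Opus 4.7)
The plan is to realise $g_{h,\delta}$ as a fixed point of a non-linear operator on $L^2(\nu)$, apply the Fréchet derivative estimate from Proposition~\ref{1007} in a contraction argument, and then bootstrap to the pointwise estimates. For $\delta \geq 0$ introduce
\begin{equation*}
  (\Psi_{h,\delta} f)(a) \coloneqq \bbone_{(-\infty,h)}(a) + \bbone_{[h,\infty)}(a)(1+\delta)\,\mathbb E^Y\!\bigl[f(\tfrac{a}{d-1}+Y)\bigr]^{d-1}
\end{equation*}
on $L^2(\nu)$ (well-defined by the hypercontractivity computation below \eqref{257}), so that $\Psi_{h,0} = R_h$, together with the truncated quantities $g^{(k)}_{h,\delta}(a) \coloneqq \mathbb E_a^\treegraph[(1+\delta)^{|\mathcal C_\rot^h \cap \treegraphplus \cap B_\treegraph(\rot,k)|}]$, which are finite for each $k$ since $B_\treegraph(\rot,k)$ is finite. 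Splitting the component at $\rot$ into the independent contributions of the $d-1$ subtrees $U_{x_i}$ attached to the children $x_1,\dots,x_{d-1}$ of $\rot$ in $\treegraphplus$, and using \eqref{275} together with the isomorphism $U_{x_i} \cong \treegraphplus$, one obtains the recursion $g^{(k)}_{h,\delta} = \Psi_{h,\delta}\,g^{(k-1)}_{h,\delta}$ for $k \geq 1$ with initial condition $g^{(0)}_{h,\delta} = \Psi_{h,\delta}(1)$. By monotonicity, $g^{(k)}_{h,\delta}(a) \uparrow g_{h,\delta}(a)$ pointwise, a priori allowing $+\infty$.

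Since $h > h_\star$, Proposition~\ref{p:Lhprops} yields $\lambda_h < 1$, and the Fréchet derivative of $\Psi_{h,\delta}$ at $f$ equals $(1+\delta)\,A^f_h$, which at $f = 1$ coincides with $(1+\delta)\,L_h$ on functions vanishing on $(-\infty,h)$. I would choose $\delta_h > 0$ so small that $(1+\delta_h)\lambda_h < 1$, pick $\varepsilon > 0$ with $\tilde\lambda \coloneqq (1+\delta_h)(\lambda_h + \varepsilon) < 1$, and invoke Proposition~\ref{1007} to obtain $r > 0$ such that $\|A^f_h g\|_{L^2(\nu)} \leq (\lambda_h + \varepsilon)\|g\|_{L^2(\nu)}$ whenever $g$ vanishes on $(-\infty,h)$ and $\|f-1\|_{L^2(\nu)} \leq r$. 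A standard mean-value argument then shows that $\Psi_{h,\delta_h}$ is a $\tilde\lambda$-contraction on the closed set $\mathcal B \coloneqq \{f \in L^2(\nu) : \|f - 1\|_{L^2(\nu)} \leq r,\ f = 1 \text{ on } (-\infty,h)\}$ (using that $f - f'$ vanishes on $(-\infty,h)$ for $f, f' \in \mathcal B$), and shrinking $\delta_h$ further if necessary the identity $\Psi_{h,\delta_h}(1) - 1 = \delta_h\,\bbone_{[h,\infty)}$ ensures $\Psi_{h,\delta_h}(\mathcal B) \subseteq \mathcal B$. Banach's fixed-point theorem yields a unique fixed point $f^\star \in \mathcal B$; by induction $g^{(k)}_{h,\delta_h} \in \mathcal B$ for every $k$, and the contraction forces $g^{(k)}_{h,\delta_h} \to f^\star$ in $L^2(\nu)$. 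Combined with the pointwise monotone convergence this gives $g_{h,\delta_h} = f^\star$ almost everywhere, proving \eqref{e:gisinl2}.

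The remaining pointwise statements follow from the recursion $g_{h,\delta_h} = \Psi_{h,\delta_h}\,g_{h,\delta_h}$, which extends from a.e.~to every $a \in \mathbb R$ by monotone convergence of $g^{(k)}_{h,\delta_h}$ inside the $\mathbb E^Y$. Writing the expectation as an integral against a Gaussian density and applying Cauchy--Schwarz against the ratio with the density of $\nu$ shows $g_{h,\delta_h}(a) < \infty$ for every $a$, and dominated convergence in the translation parameter yields continuity on $[h,\infty)$.

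The growth bound \eqref{415} is the main technical obstacle. The Cauchy--Schwarz argument just described in fact produces a sub-Gaussian estimate $g_{h,\delta_h}(a) \leq C_0 \exp(c_0 a^2)$ for some $c_0 < (d-1)/(2d)$ (depending only on $d$ and $\|g_{h,\delta_h}\|_{L^2(\nu)}$). Feeding such a bound back into the recursion and computing $\mathbb E^Y[\exp(c_n(a/(d-1)+Y)^2)]$ explicitly via the Gaussian MGF shows that one iteration replaces $(c_n, C_n)$ by $(c_{n+1}, C_{n+1})$ with $c_{n+1} = c_n/((d-1)(1 - 2c_n d/(d-1)))$ and $C_{n+1} = (1+\delta_h)(1 - 2c_n d/(d-1))^{-(d-1)/2} C_n^{d-1}$. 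For small $c_n$ this is $c_{n+1} \approx c_n/(d-1)$ and $\log C_{n+1} \approx (d-1) \log C_n$, so that after $n$ iterations $c_n$ decays geometrically while $\log C_n$ grows doubly exponentially. Optimising the resulting bound $\log g_{h,\delta_h}(a) \leq \log C_n + c_n a^2$ over $n$ (balancing the two terms at $(d-1)^n \sim a$) gives $\log g_{h,\delta_h}(a) \lesssim a$ for $a$ large; since $a \leq a^{1+\gamma}$ on $a \geq 1$, and $g_{h,\delta_h}$ is bounded on $[h, 1]$ by continuity, \eqref{415} follows for every $\gamma > 0$.
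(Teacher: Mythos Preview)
Your contraction argument for $g_{h,\delta_h}\in L^2(\nu)$ and the subsequent pointwise statements (finiteness, continuity, the fixed-point identity) follow the paper's proof essentially verbatim: your $\Psi_{h,\delta}$ is the paper's $R_{h,\delta}$, your $g^{(k)}_{h,\delta}$ are the paper's $g^k_{h,\delta}$, and the contraction on the ball near $1$ via Proposition~\ref{1007} is exactly Lemma~\ref{contractionlemma}. The only cosmetic difference is that you invoke a mean-value integral $\int_0^1 A^{f_t}_h(f-f')\,dt$ whereas the paper expands $R_h(f+g)-R_hf=A^f_hg+E^f_hg$ directly; both work because $f-f'$ vanishes on $(-\infty,h)$ for $f,f'\in\mathcal B$.

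Your route to the growth bound \eqref{415} is genuinely different from the paper's and in fact stronger. The paper first bootstraps $g_{h,\delta_h}$ into every $L^q(\nu)$ via hypercontractivity (Lemma~\ref{5555}), which yields $g_{h,\delta_h}(a)\le c_\zeta e^{\zeta a^2}$ for \emph{arbitrarily small} $\zeta>0$; it then combines this with monotonicity of $g_{h,\delta_h}$ to get a purely multiplicative recursion $g(a)\le (1+2\delta_h)\,g\bigl(\tfrac{1+\eta}{d-1}a\bigr)^{d-1}$ (Lemma~\ref{5678}), and iterating this $\approx\log_{(d-1)/(1+\eta)}a$ times gives the exponent $a^{1+\gamma}$ with $1+\gamma=\log_{(d-1)/(1+\eta)}(d-1)$. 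You instead start from a \emph{single} Cauchy--Schwarz bound with a specific $c_0$, feed it back into the fixed-point equation through the Gaussian MGF, and track the resulting pair $(c_n,C_n)$. Setting $u_n=1/c_n$ the recursion $c_{n+1}=c_n/((d-1)-2c_nd)$ becomes the \emph{linear} recursion $u_{n+1}=(d-1)u_n-2d$, so $c_n\sim \mathrm{const}\cdot(d-1)^{-n}$ exactly; since $\log C_n$ grows like $(d-1)^n$, optimising over $n$ yields $\log g_{h,\delta_h}(a)\lesssim a$, i.e.\ the case $\gamma=0$ that the paper does not reach.

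One point deserves care: for the iteration to drive $c_n\to 0$ you need $c_0$ to lie below the unstable fixed point $(d-2)/(2d)$ of $c\mapsto c/((d-1)-2cd)$, not merely below the finiteness threshold $(d-1)/(2d)$ that you state. Your Cauchy--Schwarz computation actually gives $c_0=\tfrac{d-2}{2((d-1)^2+1)}$, which does satisfy $c_0<\tfrac{d-2}{2d}$ since $(d-1)^2+1>d$ for $d\ge 3$, but you should record this explicitly.
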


\begin{remark}
  \label{r:linktointroduction}
  Note that \eqref{e:preexptail} follows from Theorem
  \ref{t:exponentialboundong} by the exponential Markov inequality. More
  precisely, for $h<h_\star$ and $a\in \mathbb R$ take say $\gamma=1$ in
  \eqref{415}. Then
  $\mathbb P^\treegraph\big[|\mathcal C_{\rot}^{h}|\ge k
    \,\big |\, \varphi_\treegraph(\rot) = a\big]
  \leq \mathbb P_a^\treegraph\big[| \mathcal C_{\rot}^{h}
    \cap \treegraphplus   |\ge k \big]
  \leq (1+\delta_h)^{-k} c_{h} \exp(c_h' a^2)$,
  thus \eqref{e:preexptail}. \qed
\end{remark}

The proof of Theorem~\ref{t:exponentialboundong} is split into various
lemmas. The first one characterises $g_{h,\delta}$ as a monotone limit of
functions in $L^2(\nu)$ which are obtained via iterated applications of a
certain operator $R_{h,\delta}$ (see \eqref{401}) to the constant 1
function (Lemma \ref{l:iteration}). The second lemma shows that for
$h>h_\star$ we can choose $\delta>0$ such that the operator $R_{h,\delta}$
is a strict contraction on a closed subset of $L^2(\nu)$ including the
constant 1 function (Lemma \ref{contractionlemma}). This is an
application of the technical Proposition \ref{1007} from Section
\ref{s:recursiveequation}. The combination of these two results will
quickly lead  to \eqref{e:gisinl2} via the Banach-Caccioppoli fixed-point
theorem and to the other properties of $g_{h,\delta}$ stated in Theorem
\ref{t:exponentialboundong} except for \eqref{415}. This is the content
of Corollary~\ref{5432}. It then remains to prove \eqref{415}. We first
show a weaker statement  in which $\gamma =1$ on the right hand side
(Lemma \ref{5555}). It  implies a recursive bound on $g_{h,\delta}$
(Lemma \ref{5678}) which subsequently can be used to show the stronger
statement  (Lemma \ref{9876}).

Let us  introduce for every $h \in \mathbb R$ and
$\delta> 0$ the  operator $R_{h,\delta}$ on $L^2(\nu)$ through:
\begin{equation}
  \label{401}
  \begin{split}
    &(R_{h,\delta} f) (a) \coloneqq \bbone_{(-\infty,h)}(a)  +
    \bbone_{[h,\infty)}(a) \cdot (1+\delta)  \mathbb E^Y \big[ f
      (\tfrac{a}{d-1} + Y ) \big]^{d-1}  \\
    &\text{for } f \in L^2(\nu)  \text{ and } a \in \mathbb R,
  \end{split}
\end{equation}
where, as usual, $Y \sim \mathcal N(0,\tfrac{d}{d-1})$. By the same
observations as below \eqref{257} one can check that indeed
$R_{h,\delta} f \in L^2(\nu)$ for $f\in L^2(\nu)$. Note also that
$R_{h,\delta}$ for $\delta> 0$  can be expressed in terms of the operator
$R_h$ from \eqref{257} via
$R_{h,\delta} = (\bbone_{(-\infty,h)} + \bbone_{[h,\infty)} \cdot (1+\delta) ) R_h$.
The role of $R_{h,\delta }$ can be seen from the following lemma.

\begin{lemma}
  \label{l:iteration}
  Let $h\in \mathbb R$, $\delta>0$ and define the (bounded) functions
  \begin{equation}
    \label{400}
    g_{h,\delta}^k(a) \coloneqq  \mathbb E_a^\treegraph \Big[ (1+\delta)^{|
        \mathcal C_{\rot}^{h}\cap B_\treegraph^+(\rot,k)   |}
      \Big]  \quad \text{for $a \in \mathbb R$, $k\geq 0$}.
  \end{equation}
  Then one has
  \begin{equation}
    \label{e:obvious}
    1\leq g^0_{h,\delta }\le g^{1}_{h,\delta }
    \leq  g^{2}_{h,\delta } \leq \ldots \leq g_{h,\delta}
    \qquad \text{and} \qquad \lim_{k\to \infty} g^k_{h,\delta} = g_{h, \delta}.
  \end{equation}
  Moreover, for every $k\ge 0$ one has
  \begin{equation}
    \label{e:kprogenyrr}
    g_{h,\delta }^k = R_{h,\delta}^{k+1} 1
    \qquad \text{and}\qquad   g_{h,\delta}^{k+1} = R_{h,\delta} g_{h,\delta}^k.
  \end{equation}
\end{lemma}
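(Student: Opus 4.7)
\textbf{Proof plan for Lemma \ref{l:iteration}.} The plan is to obtain \eqref{e:obvious} by a direct monotone-convergence argument on the sets $B_\treegraph^+(\rot,k)$, and then to derive the recursive identities \eqref{e:kprogenyrr} by decomposing $\mathcal C_\rot^h\cap\treegraphplus$ at the root and applying the conditional independence statement \eqref{275}. There is no deep obstacle here; the lemma is essentially an unpacking of the multi-type branching structure described around \eqref{e:recstep}--\eqref{275}, and the only point requiring a little care is the correct handling of the root in the decomposition.

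For \eqref{e:obvious}, observe that $(B_\treegraph^+(\rot,k))_{k\ge0}$ is increasing with $\bigcup_k B_\treegraph^+(\rot,k)=\treegraphplus$, so $|\mathcal C_\rot^h\cap B_\treegraph^+(\rot,k)|$ is a non-decreasing sequence of non-negative integers increasing to $|\mathcal C_\rot^h\cap\treegraphplus|$. Since $x\mapsto(1+\delta)^x$ is non-decreasing on $[0,\infty)$ and bounded below by $1$, monotone convergence applied under $\mathbb P_a^\treegraph$ gives the full chain $1\le g^0_{h,\delta}(a)\le g^1_{h,\delta}(a)\le\cdots$ together with $\lim_{k\to\infty} g^k_{h,\delta}(a)=g_{h,\delta}(a)$ for every $a\in\mathbb R$. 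Note the $g^k_{h,\delta}$ are bounded because $|\mathcal C_\rot^h\cap B_\treegraph^+(\rot,k)|\le |B_\treegraph^+(\rot,k)|<\infty$, so all expectations in sight are a priori finite.

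For \eqref{e:kprogenyrr}, I would first check the base case $g^0_{h,\delta}=R_{h,\delta}1$ by direct evaluation: since $B_\treegraph^+(\rot,0)=\{\rot\}$, one has $|\mathcal C_\rot^h\cap\{\rot\}|=\bbone_{\{a\ge h\}}$ under $\mathbb P_a^\treegraph$, hence $g^0_{h,\delta}(a)=\bbone_{(-\infty,h)}(a)+(1+\delta)\bbone_{[h,\infty)}(a)=(R_{h,\delta}1)(a)$ from \eqref{401}. For $k\ge 0$, I would prove $g^{k+1}_{h,\delta}=R_{h,\delta}g^k_{h,\delta}$ by conditioning on $\varphi_\treegraph(\rot)=a$. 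If $a<h$ then $\rot\notin\mathcal C_\rot^h$, so $\mathcal C_\rot^h\cap\treegraphplus=\emptyset$ and $g^{k+1}_{h,\delta}(a)=1$, which matches the first term in \eqref{401}. If $a\ge h$, let $x_1,\dots,x_{d-1}$ be the children of $\rot$ in $\treegraphplus$ and write the disjoint decomposition
\begin{equation*}
  \mathcal C_\rot^h\cap B_\treegraph^+(\rot,k+1)
  =\{\rot\}\cup\bigcup_{i=1}^{d-1}\bigl(\mathcal C_\rot^h\cap U_{x_i}\cap B_\treegraph^+(\rot,k+1)\bigr),
\end{equation*}
where under $a\ge h$ the piece in $U_{x_i}$ is exactly the connected component of $x_i$ in the level set $\{\varphi_\treegraph\ge h\}$ restricted to $U_{x_i}$, intersected with the ball of radius $k$ around $x_i$ in $U_{x_i}$.

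By \eqref{275} the random variables $(\varphi_\treegraph(y))_{y\in U_{x_i}}$ are conditionally independent for $i=1,\dots,d-1$ under $\mathbb P_a^\treegraph$, and each is distributed as $(\varphi_\treegraph(y))_{y\in\treegraphplus}$ under $\mathbb E^Y\bigl[\mathbb P_{\frac a{d-1}+Y}^\treegraph[\,\cdot\,]\bigr]$ with $Y\sim\mathcal N(0,\tfrac d{d-1})$. Hence, taking the product of independent exponential factors,
\begin{equation*}
  g^{k+1}_{h,\delta}(a)=(1+\delta)\prod_{i=1}^{d-1}\mathbb E_a^\treegraph\Bigl[(1+\delta)^{|\mathcal C_\rot^h\cap U_{x_i}\cap B_\treegraph^+(\rot,k+1)|}\Bigr]=(1+\delta)\,\mathbb E^Y\bigl[g^k_{h,\delta}(\tfrac a{d-1}+Y)\bigr]^{d-1}
\end{equation*}
for $a\ge h$, which together with the case $a<h$ gives $g^{k+1}_{h,\delta}=R_{h,\delta}g^k_{h,\delta}$ by \eqref{401}. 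Iterating from the base case yields $g^k_{h,\delta}=R_{h,\delta}^{k+1}1$ for all $k\ge0$, completing the plan. The only subtle spot to double-check is the identification of $\mathcal C_\rot^h\cap U_{x_i}$ with a copy of $\mathcal C_\rot^h\cap\treegraphplus$ living in the subtree $U_{x_i}$ (using that all paths from $\rot$ into $U_{x_i}$ pass through $x_i$), but this is exactly the content of \eqref{275}.
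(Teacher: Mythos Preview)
Your proposal is correct and follows essentially the same route as the paper: monotone convergence for \eqref{e:obvious}, then the base case $g^0_{h,\delta}=R_{h,\delta}1$ together with the branching decomposition at the root via \eqref{275} to obtain $g^{k+1}_{h,\delta}=R_{h,\delta}g^k_{h,\delta}$, from which $g^k_{h,\delta}=R^{k+1}_{h,\delta}1$ follows by iteration. The paper phrases the induction slightly differently (proving $g^k_{h,\delta}=R^{k+1}_{h,\delta}1$ directly and deducing the recursion), but the content is identical.
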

\begin{proof}
  The first part of  \eqref{e:obvious} is clear by definition and the
  second part follows by the monotone convergence theorem. Claim
  \eqref{e:kprogenyrr} can be seen via induction on $k\geq 0$. Indeed,
  for $k=0$ it holds
  $R_{h,\delta} 1 = \bbone_{(-\infty,h)}
  + \bbone_{[h,\infty)}\cdot (1+\delta) = g^0_{h,\delta}$
  by \eqref{401} and \eqref{e:defPa}. Furthermore, for $k\geq 0$ and
  $a\in \mathbb R$ one has (recall \eqref{830} and denote
    $S_\treegraph^+(\rot,1) \eqqcolon \{x_1,\ldots,x_{d-1} \}$)
  \begin{equation*}
    \begin{split}
      &g_{h,\delta}^{k+1}(a)
      =  \mathbb E_a^\treegraph \Big[ \big(
          \bbone_{\{\varphi_\treegraph(\rot)<h\}} +
          \bbone_{\{\varphi_\treegraph(\rot)\geq h\}} \big) \, (1+\delta)^{|
          \mathcal C_{\rot}^{h}\cap B_\treegraph^+(\rot,k+1)   |}
        \Big]  \\
      &\overset{\eqref{e:defPa}}{=} \bbone_{(-\infty,h)}(a)  +
      \bbone_{[h,\infty)}(a) \cdot (1+\delta)  \mathbb E_a^\treegraph \Big[
        \prod_{i=1}^{d-1} (1+\delta)^{| \mathcal C_{\rot}^{h}\cap
          B_\treegraph^+(\rot,k+1) \cap U_{x_i}   |}\Big] \\
      &\overset{\eqref{275}}{=} \bbone_{(-\infty,h)}(a)  +
      \bbone_{[h,\infty)}(a) \cdot (1+\delta) \mathbb E^Y \bigg[ \mathbb
        E_{\frac{a}{d-1}+Y}^\treegraph \Big[  (1+\delta)^{| \mathcal
            C_{\rot}^{\treegraph,h}\cap B_\treegraph^+(\rot,k)   |} \Big]
        \bigg]^{d-1} \\
      &\stackrel[\eqref{401}]{\; \eqref{400} \;}{=} (R_{h,\delta} \, g_{h,\delta}^k)(a)
      \overset{(*)}{=} (R_{h,\delta}^{k+2} 1 )(a),
    \end{split}
  \end{equation*}
  where in $(*)$ we use the induction hypothesis. This shows the first
  half of \eqref{e:kprogenyrr}, which implies the second half.
\end{proof}

For the next lemma we define for $h\in \mathbb R$ and $r>0$
\begin{equation*}
  B_{h,r} \coloneqq \{f\in L^2(\nu) \, | \, f\geq 1, \, f \textup{ equals 1 on }
    (-\infty,h) \textup{ and } \| f - 1 \|_{L^2(\nu)} \leq r \}.
\end{equation*}
Since $B_{h,r}$ is a closed subset of $L^2(\nu )$, it is a complete
metric space.

\begin{lemma}
  \label{contractionlemma}
  Let $h>h_\star$. Then there exists $\delta_h > 0$ and $r_h>0$ such that
  $R_{h,\delta_h}$ is a (strict) contraction on the complete metric space
  $B_{h,r_h}$. In particular, by the Banach-Caccioppoli fixed-point
  theorem there exists a unique $f^\star \in B_{h,r_h}$ with
  $R_{h,\delta_h} f^\star = f^\star$ and for all $f\in B_{h,r_h}$ one has
  $\| R_{h,\delta_h}^k f - f^\star \|_{L^2(\nu)} \to 0$ as $k\to \infty$.
\end{lemma}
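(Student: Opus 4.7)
The plan is to exhibit the contraction constant directly from Proposition~\ref{1007}, exploiting the fact that $h>h_\star$ gives $\lambda_h<1$ by Proposition~\ref{p:Lhprops}. First I would observe that for $f,g\in B_{h,r}$ the difference $\tilde g\coloneqq g-f$ vanishes on $(-\infty,h)$ and that $R_{h,\delta}f$ and $R_{h,\delta}g$ agree with $1$ on $(-\infty,h)$, so from \eqref{257} and \eqref{401} one has $R_{h,\delta}g-R_{h,\delta}f=(1+\delta)(R_h g-R_h f)$. Applying Proposition~\ref{1007} with the increment $\tilde g$ gives the decomposition
\begin{equation*}
R_h g-R_h f=R_h(f+\tilde g)-R_h f= A^f_h\tilde g+E^f_h \tilde g.
\end{equation*}

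Next, since $\lambda_h<1$, I would fix $\varepsilon>0$ with $\lambda_h+\varepsilon<1$ and let $r_0>0$ be the radius produced by Proposition~\ref{1007}. For any $f\in B_{h,r}$ with $r\le r_0$, the bound $\|A^f_h\tilde g\|_{L^2(\nu)}\le(\lambda_h+\varepsilon)\|\tilde g\|_{L^2(\nu)}$ applies (because $\tilde g$ vanishes on $(-\infty,h)$). For the remainder, the explicit formula for $E^f_h$ in the proof of Proposition~\ref{1007} combined with \eqref{1004} yields
\begin{equation*}
\|E^f_h\tilde g\|_{L^2(\nu)}\le \|\tilde g\|_{L^2(\nu)}\sum_{i=0}^{d-3}\tbinom{d-1}{i}\|f\|_{L^2(\nu)}^i\,\|\tilde g\|_{L^2(\nu)}^{d-2-i}.
\end{equation*}
Since $\|1\|_{L^2(\nu)}=1$ (as $\nu$ is a probability measure), we get $\|f\|_{L^2(\nu)}\le 1+r$ and $\|\tilde g\|_{L^2(\nu)}\le 2r$; every summand contains $\|\tilde g\|_{L^2(\nu)}$ to a power at least $1$, so there is $C(r)$ with $C(r)\to 0$ as $r\to 0$ and
\begin{equation*}
\|R_{h,\delta}g-R_{h,\delta}f\|_{L^2(\nu)}\le (1+\delta)\bigl(\lambda_h+\varepsilon+C(r)\bigr)\,\|g-f\|_{L^2(\nu)}.
\end{equation*}

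Having the estimate, I would fix the constants in order: choose $r_h\in(0,r_0]$ small enough that $\lambda_h+\varepsilon+C(r_h)<1$, then choose $\delta_h>0$ small enough that the resulting factor $\kappa\coloneqq(1+\delta_h)(\lambda_h+\varepsilon+C(r_h))$ is still strictly less than $1$ and moreover $\delta_h\le(1-\kappa)r_h$. It remains to check that $R_{h,\delta_h}$ sends $B_{h,r_h}$ into itself: equality with $1$ on $(-\infty,h)$ is built into \eqref{401}; the inequality $R_{h,\delta_h}f\ge 1$ on $[h,\infty)$ follows because $f\ge 1$ gives $\mathbb E^Y[f(\cdot/(d-1)+Y)]^{d-1}\ge 1$, and then $(1+\delta_h)\cdot 1=1+\delta_h\ge 1$; finally the $L^2$-condition follows from the contraction estimate applied with $g=1$:
\begin{equation*}
\|R_{h,\delta_h}f-1\|_{L^2(\nu)}\le \|R_{h,\delta_h}f-R_{h,\delta_h}1\|_{L^2(\nu)}+\|R_{h,\delta_h}1-1\|_{L^2(\nu)}\le \kappa\,\|f-1\|_{L^2(\nu)}+\delta_h\le \kappa r_h+\delta_h\le r_h.
\end{equation*}
With the self-map and strict contraction properties in hand, the Banach--Caccioppoli fixed point theorem delivers the unique fixed point $f^\star\in B_{h,r_h}$ and the convergence $\|R_{h,\delta_h}^kf-f^\star\|_{L^2(\nu)}\to0$ for every $f\in B_{h,r_h}$.

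The main obstacle is not a conceptual one but rather the bookkeeping of constants: the three smallness requirements on $\varepsilon$, $r$ and $\delta$ must be imposed in the right order so that the hypercontractivity-driven bound of Proposition~\ref{1007} survives both the nonlinear correction $E^f_h$ and the multiplicative perturbation $(1+\delta)$; the strict inequality $\lambda_h<1$ is precisely what leaves room for all three.
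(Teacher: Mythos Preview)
Your proof is correct and follows essentially the same route as the paper's: both decompose $R_{h,\delta}g-R_{h,\delta}f=(1+\delta)(A^f_h(g-f)+E^f_h(g-f))$, use Proposition~\ref{1007} to control the linear part by $\lambda_h+\varepsilon$, bound the nonlinear remainder $E^f_h$ via \eqref{1002}--\eqref{1004} by a quantity that vanishes with $r$, and then choose $r_h$ and $\delta_h$ in that order so that the resulting factor is strictly below $1$ while $R_{h,\delta_h}$ still maps $B_{h,r_h}$ into itself. The only cosmetic difference is that the paper packages the remainder bound as a second $\varepsilon_h$ (writing $\lambda_h+2\varepsilon_h<1$) whereas you keep it as an explicit $C(r)\to 0$; your implicit choice $\delta_h\le(1-\kappa)r_h$ is equivalent to the paper's condition $(1+\delta_h)(\lambda_h+2\varepsilon_h)r_h+\delta_h\le r_h$ once one unwinds the dependence of $\kappa$ on $\delta_h$.
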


\begin{proof}
  Let $\delta>0$ and consider $f,g \in L^2(\nu)$. By the relationship
  between $R_{h,\delta}$ and $R_h$ explained below \eqref{401} one has
  \begin{equation}
    \label{1008}
    \begin{split}
      &\| R_{h,\delta}g  - R_{h,\delta} f \|_{L^2(\nu)}
      =  \| (\bbone_{(-\infty,h)} + \bbone_{[h,\infty)} \cdot (1+\delta) )
      (R_h g - R_h f) \|_{L^2(\nu)} \\
      &\qquad \overset{\phantom{\eqref{1003}}}{\leq}  (1+\delta)  \cdot  \| R_h g  -
      R_h f \|_{L^2(\nu)}
      = (1+\delta) \cdot  \| R_h (f + g - f) - R_h f \|_{L^2(\nu)} \\
      &\qquad \overset{\eqref{1003}}{\leq}   (1+\delta)  \Big( \| A^f_h (g-f)
        \|_{L^2(\nu)} +   \| E^f_h (g-f) \|_{L^2(\nu)}    \Big) .
    \end{split}
  \end{equation}
  Since $h>h_\star$ (and thus $\lambda_h<1$ by
    Proposition~\ref{p:Lhprops}), we can choose $\varepsilon_h>0$ such
  that $\lambda_h + 2 \varepsilon_h < 1$. Now on the one hand, by
  Proposition \ref{1007} there exists $s_h>0$ such that
  $\| A^f_h (g-f) \|_{L^2(\nu)} \leq (\lambda_h + \varepsilon_h) \, \| g-f \|_{L^2(\nu)}$
  for $f,g \in B_{h,s_h}$, because then $f-g$ vanishes on $(-\infty,h)$
  and $\|f-1 \|_{L^2(\nu)} \leq s_h$. On the other hand, by \eqref{425}
  there exists $s'_h>0$ such that
  $\| E^f_h (g-f) \|_{L^2(\nu)} \leq \varepsilon_h  \| g-f \|_{L^2(\nu)}$
  if $\| g-f \|_{L^2(\nu)} \leq s'_h$. Hence if $f,g \in B_{h,r_h}$ with
  $r_h \coloneqq \frac12 \min\{s_h, s'_h \}$, then both conditions are
  simultaneously satisfied and one has
  \begin{equation}
    \label{1009}
    \begin{split}
      \| R_{h,\delta}g  &- R_{h,\delta} f \|_{L^2(\nu)}
      \overset{\eqref{1008}}{\leq}   (1+\delta) ( \lambda_h + 2\varepsilon_h ) \, \|
      g - f \|_{L^2(\nu)}.
    \end{split}
  \end{equation}
  Moreover, since $R_{h,\delta} 1 = 1 + \bbone_{[h,\infty)} \delta$ by
  \eqref{401}, one also has for $f\in B_{h,r_h}$ that
  \begin{equation}
    \label{1010}
    \begin{split}
      \| R_{h,\delta}f  - 1 \|_{L^2(\nu)}
      &\overset{\phantom{\eqref{1009}}}{\leq} \| R_{h,\delta}f  - R_{h,\delta} 1
      \|_{L^2(\nu)}  + \| R_{h,\delta} 1  - 1 \|_{L^2(\nu)}   \\
      &\stackrel{\eqref{1009}}{\leq}  (1+\delta) ( \lambda_h + 2\varepsilon_h ) r_h
      +   \delta.
    \end{split}
  \end{equation}
  Due to $\lambda_h + 2\varepsilon_h < 1$, we can choose
  $\delta = \delta_h >0$ such that
  $(1+\delta_h) ( \lambda_h + 2\varepsilon_h ) r_h  +  \delta_h \leq r_h$.
  This also implies
  $(1+\delta_h)(\lambda_h+2\varepsilon_h) \eqqcolon \Delta_h < 1$. Then
  $R_{h,\delta_h}$ maps the space $B_{h,r_h}$ to itself. Indeed, for
  $f\in B_{h,r_h}$ one has $R_{h,\delta_h}f \geq 1$ and
  $R_{h,\delta_h}f = 1$ on $(-\infty,h)$ by definition of $R_{h,\delta_h}$,
  and furthermore $\| R_{h,\delta_h}f  - 1 \|_{L^2(\nu)} \leq r_h$ by
  \eqref{1010}. Finally, \eqref{1009} shows
  $\| R_{h,\delta_h}g  - R_{h,\delta_h} f \|_{L^2(\nu)}
  \leq \Delta_h  \| g - f \|_{L^2(\nu)}$
  for $f,g \in B_{h,r_h}$, i.e. that $R_{h,\delta_h}$ is a strict
  contraction.
\end{proof}

With Lemma \ref{l:iteration} and Lemma \ref{contractionlemma} at hand, we
can readily show the first half of Theorem \ref{t:exponentialboundong}.

\begin{corollary}
  \label{5432}
  Let $h > h_\star$. Then there exists $\delta_h>0$ such that
  $g_{h,\delta_h} \in L^2(\nu)$. Moreover, $g_{h,\delta_h}$ equals 1 on
  $(-\infty,h)$, satisfies
  $R_{h,\delta_h} g_{h,\delta_h} = g_{h,\delta_h}$ and is continuous.
  Finally,  $g_{h,\delta_h}(a)$ is finite for all $a\in \mathbb R$.
\end{corollary}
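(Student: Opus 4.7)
The approach is to couple Lemma \ref{l:iteration} (which expresses $g_{h,\delta_h}$ as a pointwise monotone limit of iterates of $R_{h,\delta_h}$ applied to the constant $1$) with Lemma \ref{contractionlemma} (which, for $h>h_\star$ and a suitable $\delta_h$, makes these iterates into a norm-convergent sequence inside the invariant ball $B_{h,r_h}$). Concretely, I would fix $\delta_h,r_h>0$ furnished by Lemma \ref{contractionlemma}. The constant function $1$ lies in $B_{h,r_h}$, and the proof of Lemma \ref{contractionlemma} also shows that $R_{h,\delta_h}$ sends $B_{h,r_h}$ into itself, so by \eqref{e:kprogenyrr} each $g_{h,\delta_h}^k = R_{h,\delta_h}^{k+1}1$ lies in $B_{h,r_h}$; in particular $\|g_{h,\delta_h}^k\|_{L^2(\nu)} \le 1+r_h$, and by the Banach--Caccioppoli conclusion in Lemma \ref{contractionlemma}, $g_{h,\delta_h}^k$ converges in $L^2(\nu)$ to the unique fixed point $f^\star \in B_{h,r_h}$. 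Since $g_{h,\delta_h}^k \uparrow g_{h,\delta_h}$ pointwise by \eqref{e:obvious}, monotone convergence then gives $g_{h,\delta_h} \in L^2(\nu)$ with $\|g_{h,\delta_h}\|_{L^2(\nu)} \le 1+r_h$, which is \eqref{e:gisinl2}. Extracting from the $L^2$-convergence a $\nu$-a.e.\ convergent subsequence, I identify $g_{h,\delta_h}=f^\star$ $\nu$-a.e.

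The identity $g_{h,\delta_h}\equiv 1$ on $(-\infty,h)$ is immediate from \eqref{573}, since for $a<h$ one has $\rot\notin\mathcal C_\rot^h$. To promote the fixed-point equation $R_{h,\delta_h} g_{h,\delta_h}= g_{h,\delta_h}$ from a $\nu$-a.e.\ statement to a genuine pointwise one, I would let $k\to\infty$ in the pointwise identity $g_{h,\delta_h}^{k+1}=R_{h,\delta_h} g_{h,\delta_h}^k$ from \eqref{e:kprogenyrr}: the left-hand side converges pointwise to $g_{h,\delta_h}$, while inside \eqref{401} monotone convergence handles the Gaussian expectation, yielding the fixed-point equation as an equality in $[0,\infty]$ for every $a\in\mathbb R$.

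The main obstacle is to upgrade the $L^2$ information to finiteness and continuity for \emph{every} $a\in\mathbb R$. The key observation I would exploit is that, since $d\ge 3$, one has $\tfrac{d}{d-1}<\tfrac{d-1}{d-2}$, so the law $\mu_a:=\mathcal N(\tfrac{a}{d-1},\tfrac{d}{d-1})$ of $\tfrac{a}{d-1}+Y$ has strictly smaller variance than $\nu=\mathcal N(0,\tfrac{d-1}{d-2})$. An explicit Gaussian computation (completing the square in the ratio of densities) then shows that the Radon--Nikodym derivative $\tfrac{d\mu_a}{d\nu}(y)$ is bounded by a finite constant $C_a$ for each $a$, and moreover $\sup_{a\in I}C_a<\infty$ on any bounded interval $I\subset\mathbb R$. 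Consequently,
\begin{equation*}
  \mathbb E^Y\bigl[g_{h,\delta_h}(\tfrac{a}{d-1}+Y)\bigr]
  = \int g_{h,\delta_h}\,d\mu_a
  \le C_a\int g_{h,\delta_h}\,d\nu < \infty,
\end{equation*}
since $g_{h,\delta_h}\in L^2(\nu)\subseteq L^1(\nu)$; plugging this into the fixed-point equation yields $g_{h,\delta_h}(a)<\infty$ for every $a\in\mathbb R$.

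For continuity on $[h,\infty)$, I would use that $a\mapsto\tfrac{d\mu_a}{d\nu}(y)$ is continuous for each $y$ and uniformly dominated by $\sup_{a\in I}C_a\cdot g_{h,\delta_h}(y)\in L^1(\nu)$ for $a$ in a compact neighbourhood $I$ of any given point in $[h,\infty)$. Dominated convergence then gives continuity of $a\mapsto \mathbb E^Y[g_{h,\delta_h}(\tfrac{a}{d-1}+Y)]$ on $[h,\infty)$, and this continuity is preserved after raising to the power $d-1$ and multiplying by $(1+\delta_h)$; together with the already-established pointwise identity $g_{h,\delta_h}=R_{h,\delta_h}g_{h,\delta_h}$, this delivers continuity of $g_{h,\delta_h}$ on $[h,\infty)$.
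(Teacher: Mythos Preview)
Your proof is correct and follows essentially the same route as the paper: combine Lemma~\ref{l:iteration} with Lemma~\ref{contractionlemma} to get $g_{h,\delta_h}\in L^2(\nu)$ as a monotone limit of the iterates $R_{h,\delta_h}^{k+1}1$, pass to the limit in \eqref{e:kprogenyrr} via monotone convergence for the fixed-point equation, and then read off finiteness and continuity from that equation. The only difference is one of detail: where the paper simply says ``by \eqref{401} and \eqref{e:gisinl2}'' for finiteness and ``as in Lemma~\ref{l:supislessthanone}'' for continuity, you spell out the underlying Radon--Nikodym bound $d\mu_a/d\nu\le C_a$ (using $\tfrac{d}{d-1}<\tfrac{d-1}{d-2}$) and the local uniformity in $a$ needed for dominated convergence, which is a welcome clarification since $g_{h,\delta_h}$ is unbounded here, unlike the bounded $f\in\mathcal S_h$ treated in Lemma~\ref{l:supislessthanone}.
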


\begin{proof}
  Consider $\delta_h>0$, $r_h >0$ and $f^\star \in L^2(\nu)$ from
  Lemma~\ref{contractionlemma}. We start by showing that $\nu$-almost
  everywhere  $f^\star = g_{h,\delta_h}$ and hence
  $g_{h,\delta_h} \in L^2(\nu)$. Note that by Lemma
  \ref{contractionlemma} one has $1\leq f^\star$ and thus by
  \eqref{e:kprogenyrr} also
  $g^{k}_{h,\delta_h} = R_{h,\delta_h}^{k+1} 1 \leq R_{h,\delta_h}^{k+1}  f^\star = f^\star$
  for all $k\geq 0$. By \eqref{e:obvious} and the monotone convergence
  theorem this shows
  \begin{equation}
    \label{419}
    \| \lim_{k\to \infty} g^{k}_{h,\delta_h} - f^\star \|_{L^2(\nu)}
    = \lim_{k\to \infty} \| g^{k}_{h,\delta_h} - f^\star \|_{L^2(\nu)}
    \overset{\eqref{e:kprogenyrr}}{=}
    \lim_{k\to \infty} \| R^k_{h,\delta_h}1 - f^\star \|_{L^2(\nu)}.
  \end{equation}
  Now since $1\in B_{h,r_h}$, we know from Lemma \ref{contractionlemma}
  that
  $\lim_{k\to \infty} \| R^k_{h,\delta_h}1 - f^\star \|_{L^2(\nu)} = 0$.
  Hence
  $\| \lim_{k\to \infty} g^{k}_{h,\delta_h} - f^\star \|_{L^2(\nu)} = 0$
  by \eqref{419} and so $\nu$-almost everywhere
  $f^\star = g_{h,\delta_h}$ by \eqref{e:obvious}. By \eqref{573} and
  \eqref{e:defPa}  it is obvious that $g_{h,\delta_h}$ equals 1 on
  $(-\infty,h)$. Now if we take $k$ to infinity on both sides of the
  equation $g_{h,\delta_h }^{k+1} = R_{\delta_h ,h} g_{h,\delta_h }^k$
  from \eqref{e:kprogenyrr}, we obtain
  \begin{equation}
    \label{e:gisfixedpoint}
    g_{h,\delta_h } = R_{h,\delta_h }g_{h,\delta_h }
  \end{equation}
  by \eqref{e:obvious} and the monotone convergence theorem. The right
  hand side of \eqref{e:gisfixedpoint} satisfies
  $(R_{h,\delta_h}g_{h,\delta_h})(a) < \infty $  for all $a\in \mathbb R$
  by \eqref{401} and \eqref{e:gisinl2}. Hence $g_{h,\delta_h}(a) < \infty$
  for all $a \in \mathbb R$. With \eqref{e:gisfixedpoint} established, we
  can show the continuity of $g_{h,\delta_h}$ on $[h,\infty)$ in the same
  way as the continuity of $f$ in Lemma~\ref{l:supislessthanone}.  Hence
  the proof of Corollary \ref{5432} is complete.
\end{proof}

It remains to prove \eqref{415}. In the next lemma we show a weaker
statement by applying results from Corollary \ref{5432}.

\begin{lemma}
  \label{5555}
  Let $h>h_\star$ and consider the function $g_{h,\delta_h} \in L^2(\nu)$
  from Corollary \ref{5432}. For all $\zeta>0$ there exists
  $c_{h,\zeta}>0$ such that
  \begin{equation}
    \label{409}
    g_{h,\delta_h}(a) \leq c_{h,\zeta} \exp(\zeta a^2) \quad \text{for $a \geq h$}.
  \end{equation}
\end{lemma}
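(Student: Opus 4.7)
The plan is to bootstrap the membership $g_{h,\delta_h} \in L^2(\nu)$ established in Corollary~\ref{5432} into an initial Gaussian-type pointwise bound, and then to iterate the fixed-point equation $R_{h,\delta_h} g_{h,\delta_h} = g_{h,\delta_h}$ in order to drive the coefficient in the exponent arbitrarily small. The base step uses the identity
\begin{equation*}
g_{h,\delta_h}(a) = (1+\delta_h)\, \mathbb E^Y \big[g_{h,\delta_h}(\tfrac{a}{d-1} + Y)\big]^{d-1}, \qquad a \ge h,
\end{equation*}
supplied by Corollary~\ref{5432}. Bounding the inner expectation by Cauchy--Schwarz against the Gaussian density ratio, exactly as in the derivation of \eqref{202}--\eqref{203} but with $\chi_h$ replaced by $g_{h,\delta_h}$ (and using $\|g_{h,\delta_h}\|_{L^2(\nu)}<\infty$), yields
\begin{equation*}
g_{h,\delta_h}(a) \le C_0 \exp(c_0 a^2) \quad \text{for all } a \in \mathbb R,
\end{equation*}
with an explicit $c_0 = (d-2)/(2(d-1)^2) > 0$ depending only on $d$. (The bound automatically extends to $a<h$, where $g_{h,\delta_h}\equiv 1$, provided $C_0 \ge 1$.)

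For the iteration, assume inductively that $g_{h,\delta_h}(b) \le C_n \exp(c_n b^2)$ for all $b \in \mathbb R$ with $c_n < (d-1)/(2d)$. Plugging this bound into the right-hand side of the fixed-point equation and evaluating the Gaussian moment generating function $\mathbb E^Y\big[\exp(c_n(\tfrac{a}{d-1}+Y)^2)\big]$ in closed form produces a new bound of the same form with exponent
\begin{equation*}
c_{n+1} = T(c_n), \qquad T(c) := \frac{c}{(d-1) - 2cd},
\end{equation*}
and some finite new constant $C_{n+1}$. The map $T$ has fixed points $0$ and $(d-2)/(2d)$, and $T'(0) = 1/(d-1) < 1$. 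A direct inequality check (reducing to $(d-1)^2 > d$) shows that $c_0 < (d-2)/(2d)$ for every $d\ge 3$; consequently $T$ acts as a strict contraction towards $0$ on $[0, (d-2)/(2d))$, so the iterates $c_n = T^n(c_0)$ decrease monotonically to $0$. Given $\zeta > 0$, pick $n$ such that $c_n \le \zeta$ and set $c_{h,\zeta} := C_n$; since only finitely many iterations are needed, $C_n$ is finite, yielding \eqref{409}.

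The only nontrivial verification is that the base exponent $c_0$ produced by the $L^2$-Cauchy--Schwarz step lies strictly below the unstable fixed point $(d-2)/(2d)$ of the iteration map $T$; outside this basin, iteration would fail to bring the exponent down to zero. Apart from this, everything reduces to an explicit Gaussian integral computation and the routine analysis of the one-dimensional dynamical system $c \mapsto T(c)$ near its attractive fixed point~$0$.
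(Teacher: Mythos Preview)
Your argument is correct and takes a genuinely different route from the paper's. The paper bootstraps integrability: using the hypercontractivity estimate~\eqref{405} together with $g_{h,\delta_h}=R_{h,\delta_h}g_{h,\delta_h}$, it shows by induction that $g_{h,\delta_h}\in L^{p_i}(\nu)$ along an explicit sequence $p_i\to\infty$, and then for given $\zeta>0$ picks $q$ with $1/(2q)\le \zeta$ so that $g_{h,\delta_h}\in L^q(\nu)$ forces $g_{h,\delta_h}(a)\exp(-a^2/(2q))\to 0$, whence~\eqref{409} via continuity. You instead work pointwise from the outset: one application of Cauchy--Schwarz against the density ratio (the same trick as in~\eqref{202}--\eqref{203}, with $k=1$) yields an initial bound $g_{h,\delta_h}(a)\le C_0\exp(c_0 a^2)$ with $c_0=(d-2)/(2(d-1)^2)$, and then you iterate the fixed-point equation, reducing the exponent along the one-dimensional map $T(c)=c/((d-1)-2cd)$, which contracts to~$0$ on $[0,(d-2)/(2d))$.

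The trade-offs are clear. The paper's approach reuses the hypercontractivity machinery already set up in Section~\ref{subsection1.1} and requires no delicate numerical check. Your approach is more elementary (only Cauchy--Schwarz and closed-form Gaussian integrals) but hinges on the inequality $c_0<(d-2)/(2d)$, equivalently $(d-1)^2>d$, which you rightly flag as the crucial verification; without it the iteration would diverge. One small point worth making explicit in your write-up: to feed the bound back into $\mathbb E^Y[g_{h,\delta_h}(\tfrac{a}{d-1}+Y)]$ you need it to hold for all $b\in\mathbb R$, not just $b\ge h$, so you should note (as you do for the base step) that each $C_n$ can be taken $\ge 1$, ensuring the bound is trivially valid on $(-\infty,h)$ where $g_{h,\delta_h}\equiv 1$; this follows since $C_{n+1}=(1+\delta_h)C_n^{\,d-1}(1-2c_n d/(d-1))^{-(d-1)/2}\ge 1$ whenever $C_n\ge 1$.
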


\begin{proof}
  We will first show that $g_{h,\delta_h} \in L^q(\nu)$ for all $q \geq 1$,
  which will then imply \eqref{409}. Let us define
  \begin{equation}
    \label{404}
    \begin{split}
      &p_0 \coloneqq 2 \quad \text{and} \quad p_{i+1} \coloneqq (p_{i}
        -1)(d-1)+\tfrac1{d-1} \quad \text{for } i\geq 0,    \\
      &q_i \coloneqq (p_i -1)(d-1)^2+1 \quad \text{for } i\geq 0.
    \end{split}
  \end{equation}
  We  prove by induction that $g_{h,\delta_h} \in L^{p_i}(\nu)$ for all
  $i\geq 0$ by using the hypercontractivity estimate \eqref{405}. For
  $i=0$ we have $p_0=2$ and hence $g_{h,\delta_h} \in L^{p_0}(\nu)$ as
  seen in Corollary~\ref{5432}. Now assume
  $g_{h,\delta_h} \in L^{p_i}(\nu)$ for $i\geq 0$. Since
  $g_{h,\delta_h}=  R_{h,\delta_h} \,  g_{h,\delta_h}$ by
  Corollary~\ref{5432}, it follows that to prove
  $g_{h,\delta_h} \in L^{p_{i+1}}(\nu)$ it is enough to show
  $\hat g_{h,\delta_h}^{d-1} \in L^{p_{i+1}}(\nu)$, where we  abbreviated
  $\hat g_{h,\delta_h} (a) \coloneqq \mathbb E^Y \big[ g_{h,\delta_h} (\tfrac{a}{d-1} + Y ) \big]$.
  And indeed we have, using \eqref{404}, \eqref{405} and the induction
  hypothesis, that
  $\| \hat g_{h,\delta_h}^{d-1}\|_{L^{p_{i+1}}(\nu)}
  =\| \hat g_{h,\delta_h} \|^{d-1}_{L^{q_i}(\nu)}
  \leq \|g_{h,\delta_h}\|^{d-1}_{L^{p_i}(\nu)}  <\infty$.
  Next we show that the sequence $(p_i)_{i\geq 0}$ diverges to infinity
  as $i$ tends to infinity. To see this, note that
  $r_{i} \coloneqq \tfrac{d-2}{d-1}(d-1)^i  + \tfrac1{d-1} + 1$, $i\geq 0$,
  solves the recursion for $(p_i)_{i\geq 0}$ given in \eqref{404} and
  clearly $r_i \xrightarrow{i\to \infty} \infty$. This implies that
  $g_{h,\delta_h} \in L^q(\nu)$ for all $q\geq 1$ since we can take
  $i\geq 0$ such that $q < p_i$. Then
  $g_{h,\delta_h} \in L^{p_i}(\nu) \subseteq L^q(\nu)$.

  We turn to show \eqref{409}. Let $\zeta>0$ and take $q \geq 1$ large
  enough such that $2q \geq \tfrac1\zeta$. Since
  $g_{h,\delta_h}\in L^q(\nu)$ as just shown and
  $\nu = \mathcal N(0,\tfrac{d-1}{d-2})$ from above \eqref{e:defLh}, one
  has
  $\lim_{a\to \infty} g_{h,\delta_h}(a)^q \exp(-\tfrac{(d-2)a^2}{(d-1)2}) = 0$,
  which implies
  $\lim_{a\to \infty} g_{h,\delta_h}(a) \exp(-\tfrac{a^2}{2q}) = 0$. But
  this shows the statement of the lemma by the choice of $q$ (use that
    $g_{h,\delta_h}$ is continuous on $[h,\infty)$ by Corollary \ref{5432}).
\end{proof}

The estimate obtained in the previous lemma can be used to derive the
following recursive bound on $g_{h,\delta_h}$ which is the final
ingredient for the proof of \eqref{415}

\begin{lemma}
  \label{5678}
  Let $h>h_\star$ and consider the function $g_{h,\delta_h} \in L^2(\nu)$
  from Corollary \ref{5432}. For all $\eta>0$ there exists $c_{h,\eta}>0$
  such that
  \begin{equation}
    \label{4111}
    g_{h,\delta_h}(a) \leq (1+2\delta_h)g_{h,\delta_h}
    \big(\tfrac{a}{d-1}(1+ \eta) \big)^{d-1}
    \quad \text{ for all } a\geq c_{h,\eta}.
  \end{equation}
\end{lemma}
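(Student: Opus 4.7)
The plan is to exploit the fixed-point identity $g_{h,\delta_h}=R_{h,\delta_h}g_{h,\delta_h}$ from Corollary~\ref{5432}, which by \eqref{401} reads, for $a\geq h$,
\begin{equation*}
g_{h,\delta_h}(a)=(1+\delta_h)\,\mathbb E^Y\bigl[g_{h,\delta_h}(\tfrac{a}{d-1}+Y)\bigr]^{d-1},\qquad Y\sim\mathcal N(0,\tfrac{d}{d-1}),
\end{equation*}
and to split the inner expectation at the threshold $\{Y\leq a\eta/(d-1)\}$. On this event $\tfrac{a}{d-1}+Y\leq \tfrac{a}{d-1}(1+\eta)$, so monotonicity of $g_{h,\delta_h}$ controls the contribution by $g_{h,\delta_h}(\tfrac{a}{d-1}(1+\eta))$; on the complementary event the Gaussian tail in $a$ beats the growth of $g_{h,\delta_h}$ provided by Lemma~\ref{5555}, making the resulting error small enough to be absorbed by the multiplicative slack between $(1+\delta_h)$ and $(1+2\delta_h)$.

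The monotonicity of $g_{h,\delta_h}$ on $\mathbb R$ I would derive from a coupling based on the recursive construction \eqref{254} and \eqref{blau}: keeping the Gaussians $(Y_x)_{x\neq \rot}$ fixed while setting $Y_\rot=a$ produces a realisation of $\mathbb P_a^{\treegraph}$ of the form $\varphi_{\treegraph}^{(a)}(x)=a/(d-1)^{d_\treegraph(\rot,x)}+\zeta(x)$, with $\zeta$ a field independent of $a$. Hence $a_1\leq a_2$ implies pointwise $\varphi^{(a_1)}\leq\varphi^{(a_2)}$, the corresponding level sets are nested, so $\mathcal C_\rot^h(\varphi^{(a_1)})\subseteq \mathcal C_\rot^h(\varphi^{(a_2)})$, and \eqref{573} yields $g_{h,\delta_h}(a_1)\leq g_{h,\delta_h}(a_2)$.

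Writing $A(a)$ and $B(a)$ for the two pieces of the decomposition, monotonicity gives $A(a)\leq g_{h,\delta_h}(\tfrac{a}{d-1}(1+\eta))$, and $g_{h,\delta_h}\geq 1$ yields $A(a)\geq \mathbb P^Y[Y\leq a\eta/(d-1)]$, which converges to $1$ as $a\to\infty$. For $B(a)$ I apply Cauchy--Schwarz,
\begin{equation*}
B(a)\leq \mathbb E^Y\bigl[g_{h,\delta_h}(\tfrac{a}{d-1}+Y)^2\bigr]^{1/2}\,\mathbb P^Y\bigl[Y>\tfrac{a\eta}{d-1}\bigr]^{1/2}.
\end{equation*}
The exponential Markov inequality controls the second factor by $\exp(-a^2\eta^2/(4d(d-1)))$. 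For the first, invoking Lemma~\ref{5555} with a small parameter $\zeta\in(0,\tfrac{d-1}{4d})$ and completing the square in the resulting shifted Gaussian integral produces a bound of the form $c_{h,\zeta}\exp(\kappa(\zeta)a^2)$ where $\kappa(\zeta)\downarrow 0$ as $\zeta\downarrow 0$. Choosing $\zeta=\zeta(h,\eta)$ sufficiently small makes the total exponent strictly negative, so $B(a)\leq c_{h,\eta}\exp(-c'_{h,\eta}a^2)$.

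Combining these estimates, for $a$ large enough that $A(a)\geq 1/2$ and $(1+2B(a))^{d-1}\leq (1+2\delta_h)/(1+\delta_h)$ --- both automatic as $a\to\infty$ by the preceding bounds --- one obtains
\begin{equation*}
g_{h,\delta_h}(a)=(1+\delta_h)(A(a)+B(a))^{d-1}\leq (1+2\delta_h)\,A(a)^{d-1}\leq (1+2\delta_h)\,g_{h,\delta_h}\bigl(\tfrac{a}{d-1}(1+\eta)\bigr)^{d-1},
\end{equation*}
which is \eqref{4111}. The delicate step is the Cauchy--Schwarz estimate for $B(a)$: its success depends on Lemma~\ref{5555} providing an arbitrarily small exponent $\zeta$ in the quadratic bound on $g_{h,\delta_h}$, so that the induced $L^2$-growth rate can be pushed below the Gaussian tail rate $\eta^2/(2d(d-1))$ for any prescribed $\eta>0$.
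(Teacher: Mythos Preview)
Your proof is correct and follows essentially the same route as the paper: use the fixed-point identity, split the inner expectation at $\{Y\le a\eta/(d-1)\}$, control the main part by monotonicity of $g_{h,\delta_h}$, and kill the tail using Lemma~\ref{5555} so that the error can be absorbed into the factor $(1+2\delta_h)$. The only cosmetic difference is that the paper bounds the tail expectation $\mathbb E^Y[g_{h,\delta_h}(\tfrac{a}{d-1}+Y)\bbone_{\{Y\ge a\eta/(d-1)\}}]$ directly by substituting $z=\tfrac{d-1}{a}Y$ and integrating against the bound from Lemma~\ref{5555}, whereas you first apply Cauchy--Schwarz; and the paper factors via $g_{h,\delta_h}(\cdot)\ge 1$ rather than via $A(a)\ge 1/2$. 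Both variants work equally well.
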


\begin{proof}
  Let $\eta>0$.  Because
  $g_{h,\delta_h}= R_{h,\delta_h} \,  g_{h,\delta_h}$ as obtained in
  \eqref{e:gisfixedpoint}, one has  for $a\geq h$ and with
  $Y\sim \mathcal N(0,\frac{d}{d-1})$ that
  \begin{equation}
    \label{407}
    \begin{split}
      g_{h,\delta_h}(a)
      & = (1+\delta_h) \Big(  \mathbb E^Y\Big[g_{h,\delta_h}(\tfrac{a}{d-1}+ Y)
          \bbone_{\{Y < \tfrac{\eta a}{d-1} \}} \Big] + \mathbb E^Y
        \Big[g_{h,\delta_h}(\tfrac{a}{d-1}+ Y) \bbone_{\{Y \geq  \tfrac{\eta a}{d-1}
              \}} \Big]    \Big)^{d-1}
      \\ & \leq (1+\delta_h)  \Big(   g_{h,\delta_h} \big(\tfrac{a}{d-1}(1+ \eta) \big) +
        \mathbb E^Y \Big[g_{h,\delta_h}(\tfrac{a}{d-1}+ Y) \bbone_{\{Y \geq
              \tfrac{\eta a}{d-1} \}} \Big]    \Big)^{d-1},
    \end{split}
  \end{equation}
  where in the last step we use that $g_{h,\delta_h}$ is a non-decreasing
  function (see \eqref{573} and \eqref{e:defPa}). Because
  $g_{h,\delta_h}\geq 1$, we further obtain from \eqref{407}  that for
  $a\geq h$
  \begin{equation}
    \label{412}
    g_{h,\delta_h}(a) \leq (1+\delta_h)  \Big( 1 +  \mathbb E^Y
      \Big[g_{h,\delta_h}(\tfrac{a}{d-1}+ Y) \bbone_{\{Y \geq \eta \tfrac{a}{d-1}
            \}} \Big]    \Big)^{d-1}   g_{h,\delta_h} \big(\tfrac{a}{d-1}(1+ \eta) \big)
    ^{d-1}.
  \end{equation}
  To bound the expectation on the right hand side of \eqref{412} we will
  apply \eqref{409} for some $\zeta>0$ depending on $\eta$. Choose
  $0<\zeta<\tfrac{d-1}{2d}\tfrac{\eta^2}{(1+\eta)^2}$, so that in
  particular $\zeta < \tfrac{d-1}{2d}$ and
  $\zeta(1+\eta)^2-\tfrac{d-1}{2d}\eta^2 < 0$. Then
  $z\mapsto\zeta(1+z)^2-\tfrac{d-1}{2d}z^2
  = (\zeta-\tfrac{d-1}{2d})z^2+2\zeta z + \zeta$
  is a parabola with negative leading coefficient and two zeros of
  opposite sign. As the parabola is negative at $z=\eta>0$, this shows
  that $\zeta(1+z)^2-\tfrac{d-1}{2d}z^2 < 0$ for all $z\geq \eta$. Hence
  there exists $c_{\zeta, \eta}=c_\eta'>0$ such that
  \begin{equation}
    \label{408}
    \zeta(1+z)^2-\tfrac{d-1}{2d}z^2 < -c_\eta' z \quad \text{for all $z\geq \eta$}.
  \end{equation}
  Since $Z\coloneqq \tfrac{d-1}{a}Y$ satisfies
  $Z \sim \mathcal N(0,\sigma_a^2)$ for
  $\sigma_a^2 \coloneqq \tfrac{(d-1)^2}{a^2} \tfrac{d}{d-1}$, we get the
  following bound for $a\geq (d-1)h$
  \begin{equation*}
    \begin{split}
      &\mathbb E^Y \Big[g_{h,\delta_h}(\tfrac{a}{d-1}+ Y) \bbone_{\{Y \geq \eta
            \tfrac{a}{d-1} \}} \Big]
      \ = \ \frac{1}{\sqrt{2\pi \sigma_a^2}} \int_{\eta}^\infty
      g_{h,\delta_h}\big(\tfrac{a}{d-1} (1+z)\big)
      \exp\big(-\tfrac{z^2}{2\sigma_a^2}\big)  \, dz \\
      &\overset{\eqref{409}}{\leq} \frac{c_{h,\eta}}{\sqrt{2\pi \sigma_a^2}}
      \int_{\eta}^\infty   \exp\Big((\tfrac{a}{d-1})^2 \big(\zeta
          (1+z)^2-\tfrac{d-1}{2d} z^2\big)\Big)  \, dz  \\
      &\overset{\eqref{408}}{\leq}  \frac{c_{h,\eta}}{\sqrt{2\pi \sigma_a^2}}
      \int_{\eta}^\infty   \exp\big(- (\tfrac{a}{d-1})^2 c_\eta'  z \big)  \,
      dz
      \ = \  \frac{c_{h,\eta}}{\sqrt{2\pi \sigma_a^2}} \frac{(d-1)^2}{a^2 c_\eta'}
      \exp\big(-(\tfrac{a}{d-1})^2 c_\eta' \eta \big),
    \end{split}
  \end{equation*}
  which tends to zero as $a$ tends to infinity. Hence there is
  $c_{h,\eta} >0$ such that
  $(1+\delta_h)  \big(  1 +
    \mathbb E^Y \big[g_{h,\delta_h}(\tfrac{a}{d-1}+ Y)
      \bbone_{\{Y \geq \eta \tfrac{a}{d-1} \}} \big]    \big)^{d-1}
  \leq (1+2\delta_h)$
  for all $a\geq c_{h,\eta}$. This, together with \eqref{412}, concludes
  the proof of Lemma \ref{5678}.
\end{proof}

The next and final lemma shows \eqref{415} and hence concludes the proof
of Theorem \ref{t:exponentialboundong}.

\begin{lemma}
  \label{9876}
  Let $h>h_\star$ and consider the function $g_{h,\delta_h} \in L^2(\nu)$
  from Corollary \ref{5432}. For all $\gamma > 0$ there exist
  $c_{h,\gamma}>0$ and $c_{h,\gamma}'>0$ such that
  $g_{h,\delta_h}(a) \leq c_{h,\gamma} \exp(c_{h,\gamma}' a^{1+\gamma})$
  for all $a\geq h$.
\end{lemma}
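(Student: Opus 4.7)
The plan is to iterate the recursive bound from Lemma \ref{5678} until the argument falls into a bounded range, and to choose the parameter $\eta$ in Lemma \ref{5678} so that the resulting growth exponent is arbitrarily close to $1$.

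Fix $\gamma>0$. Choose $\eta=\eta(\gamma)>0$ small enough that
\begin{equation*}
\rho \coloneqq \tfrac{1+\eta}{d-1}<1
\quad\text{and}\quad
\frac{\log(d-1)}{\log(1/\rho)}=\frac{\log(d-1)}{\log(d-1)-\log(1+\eta)}\le 1+\gamma,
\end{equation*}
which is possible since the left-hand side tends to $1$ as $\eta\downarrow 0$. Let $c_{h,\eta}$ be the threshold from Lemma \ref{5678}, and set $M_{h,\eta}\coloneqq\sup_{h\le b\le c_{h,\eta}/\rho}g_{h,\delta_h}(b)$, which is finite because $g_{h,\delta_h}$ is continuous on $[h,\infty)$ by Corollary \ref{5432}.

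Now fix $a\ge c_{h,\eta}/\rho$ (for smaller $a\ge h$ the bound \eqref{415} is trivial by continuity of $g_{h,\delta_h}$, after absorbing into $c_{h,\gamma}$) and define $a_0\coloneqq a$, $a_{k+1}\coloneqq\rho\, a_k=a\rho^{k+1}$. Let $K=K(a)$ be the largest integer with $a_K\ge c_{h,\eta}$, so that $a_K\in[c_{h,\eta},c_{h,\eta}/\rho)$ and $K=\lfloor\log_{1/\rho}(a/c_{h,\eta})\rfloor$. Iterating Lemma \ref{5678} $K$ times (which is permitted since each $a_j$, $0\le j<K$, exceeds $c_{h,\eta}$) gives
\begin{equation*}
g_{h,\delta_h}(a)\le (1+2\delta_h)^{N_K}\, g_{h,\delta_h}(a_K)^{(d-1)^K},
\qquad N_K=\sum_{j=0}^{K-1}(d-1)^j=\tfrac{(d-1)^K-1}{d-2}.
\end{equation*}
Since $a_K\in[c_{h,\eta},c_{h,\eta}/\rho)$, we have $g_{h,\delta_h}(a_K)\le M_{h,\eta}$, so taking logarithms yields
\begin{equation*}
\log g_{h,\delta_h}(a)\le N_K\log(1+2\delta_h)+(d-1)^K\log M_{h,\eta}\le C_{h,\eta}\,(d-1)^K
\end{equation*}
for a constant $C_{h,\eta}>0$ (independent of $a$; we may assume $M_{h,\eta}\ge 1$).

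Finally, from $K\le\log_{1/\rho}(a/c_{h,\eta})$ we get
\begin{equation*}
(d-1)^K\le (d-1)\Bigl(\tfrac{a}{c_{h,\eta}}\Bigr)^{\log_{1/\rho}(d-1)}\le C'_{h,\eta}\, a^{1+\gamma}
\end{equation*}
by the choice of $\eta$. Combining these two estimates yields $g_{h,\delta_h}(a)\le\exp(c'_{h,\gamma}a^{1+\gamma})$ for $a\ge c_{h,\eta}/\rho$, and absorbing the bounded range $a\in[h,c_{h,\eta}/\rho)$ into a multiplicative constant $c_{h,\gamma}$ completes the proof of \eqref{415}.

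The main technical point is the reduction of the growth exponent from the $2$ given by Lemma \ref{5555} to $1+\gamma$: this is exactly what iterating the recursion of Lemma \ref{5678} achieves, because each iteration multiplies the argument by $\rho<1$ while raising the value to the power $d-1$, and the logarithmic balance between these two operations produces the exponent $\log_{1/\rho}(d-1)$, which can be pushed arbitrarily close to $1$ by taking $\eta$ small.
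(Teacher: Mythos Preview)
Your proof is correct and follows essentially the same approach as the paper's: choose $\eta$ so that $\log_{1/\rho}(d-1)\le 1+\gamma$ with $\rho=\tfrac{1+\eta}{d-1}$, iterate the recursive bound of Lemma~\ref{5678} until the argument lands in a compact interval, and use continuity of $g_{h,\delta_h}$ there. The only cosmetic differences are that the paper takes $\eta$ so that the exponent equals $1+\gamma$ exactly (rather than $\le$) and organises the iteration via the intervals $J_k=[c_{h,\eta}\rho^{-k},c_{h,\eta}\rho^{-(k+1)})$, but the substance is identical.
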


\begin{proof}
  Let $\gamma>0$ and take $\eta>0$ such that
  $1+\gamma = \log_{\frac{d-1}{1+\eta}}(d-1)$,  in particular
  $\tfrac{d-1}{1+\eta} > 1$. We abbreviate $K \coloneqq c_{h,\eta}$ for
  the constant from \eqref{4111}. Since $g_{h,\delta_h}$ is continuous on
  $[h,\infty)$ by Corollary~\ref{5432}, it is enough to find the
  requested bound on $g_{h,\delta_h}$ for all $a\geq K$. Define the
  intervals
  \begin{equation*}
    J_k \coloneqq \Big[ K \big( \tfrac{d-1}{1+\eta} \big)^k ,  K \big(
        \tfrac{d-1}{1+\eta} \big)^{k+1} \Big) \quad \text{for all $k\geq 0$},
  \end{equation*}
  which form a disjoint decomposition of $[K,\infty)$. For $a \geq K$ let
  $k(a)\geq 0$ be the unique $k\geq 0$ with $a \in J_k$, that is,
  $k(a) \coloneqq \lfloor \log_{\frac{d-1}{1+\eta}}(\tfrac{a}{K})\rfloor$.
  For such $a$ one can apply  \eqref{4111} iteratively $k(a)$ times to
  obtain
  \begin{equation}
    \label{413}
    \begin{split}
      g_{h,\delta_h}(a)& \leq (1+2\delta_h)g_{h,\delta_h} \big(\tfrac{a}{d-1}(1+ \eta)
        \big)^{d-1}
      \\&\leq (1+2\delta_h)^{1+(d-1)} g_{h,\delta_h} \Big(\tfrac{a}{(d-1)^2}(1+ \eta)^2
        \Big)^{(d-1)^2}                                                          \\
      &  \leq \ldots \leq (1+2\delta_h)^{\sum_{i=0}^{k(a)-1} (d-1)^i} g_{h,\delta_h}
      \Big(\underbrace{\tfrac{a}{(d-1)^{k(a)}}(1+ \eta)^{k(a)}}_{\in J_0}
        \Big)^{(d-1)^{k(a)}}                                                 \\
      & \leq \Big( (1+2\delta_h)\sup_{b\in J_0} g_{h,\delta_h}(b)  \Big)^{(d-1)^{k(a)}}.
    \end{split}
  \end{equation}
  Note that
  $(d-1)^{k(a)} \leq (d-1)^{\log_{\frac{d-1}{1+\eta}}\big(\tfrac{a}{K}\big)} =
  \big( \tfrac{a}{K} \big)^{\log_{\frac{d-1}{1+\eta}}(d-1)} =  \big( \tfrac{a}{K}
    \big)^{1+\gamma}$
  and therefore \eqref{413} implies that
  \begin{equation*}
    \begin{split}
      g_{h,\delta_h}(a) &\leq \exp \Big(  \big( \tfrac{a}{K} \big)^{1+\gamma} \ln \Big(
          (1+2\delta_h)\sup_{b\in J_0} g_{h,\delta_h}(b)  \Big) \Big)
      \leq c_{h,\gamma} \exp(c_{h,\gamma}' a^{1+\gamma})  \quad \text{for } a\geq K.
    \end{split}
  \end{equation*}
  As explained above, this proves the lemma.
\end{proof}

We end with some concluding remarks.  One might naturally wonder what can
be said about level-set percolation of the Gaussian free field on
$\treegraph$ near criticality. For example: can the result from Theorem
\ref{t:etacontinuity} be extended to $h_\star$, i.e. are the functions
$\eta$ and $\eta^+$ continuous or not at $h_\star$? Or also: does the
equality  \eqref{222} hold for $h=h_\star$, too?

Independently from that, and as remarked in the introduction, we apply a
number of the results obtained here in the accompanying paper \cite{AC2}
to establish a phase transition for level-set percolation of the
zero-average Gaussian free field on a class of finite regular expanders.

%%%%%%%%%%%%%%%%%%%%%%%%%%%%%%%%%%%%%%
%%%%%%%%%%%%%%%%%%%%%%%%%%%%%%%%%%%%%%
%%%%%%%%%%%%%%%%%%%%%%%%%%%%%%%%%%%%%%
%%%%%%%%%%%%%%%%%%%%%%%%%%%%%%%%%%%%%%
%%%%%%%%%%%%%%%%%%%%%%%%%%%%%%%%%%%%%%
%%%%%%%%%%%%%%%%%%%%%%%%%%%%%%%%%%%%%%
%%%%%%%%%%%%%%%%%%%%%%%%%%%%%%%%%%%%%%
%%%%%%%%%%%%%%%%%%%%%%%%%%%%%%%%%%%%%%

%BIBLIOGRAPHY

\bibliographystyle{alpha}
\bibliography{bibliographyfile}

\end{document}